\theoremstyle{plain}
\newtheorem{theorem}{Theorem}[section]
\newtheorem{thm}{Theorem}[section]
\newtheorem{prop}[thm]{Proposition}
\newtheorem{remark}[thm]{Remark}
\newtheorem{rem}[thm]{Remark}
\newtheorem{cor}[thm]{Corollary}
\newtheorem{defn}[thm]{Definition}
\newtheorem{lem}[thm]{Lemma}
\theoremstyle{definition}
\newtheorem{assump}[theorem]{Assumption}
\theoremstyle{plain}
\newcommand{\Z}{\mathbb{Z}}
\newcommand{\R}{\mathbb{R}}
\newcommand{\N}{\mathbb{N}}
\newcommand{\CP}{\mathbb{CP}}
\newcommand{\HP}{\mathbb{HP}}
\newcommand{\dimn}{\mathrm{dim}}
\newcommand{\supp}{\mathrm{supp}}
\newcommand{\ric}{\mathrm{Ric}}
\newcommand{\Spec}{\mathrm{Spec}}
\DeclareMathOperator{\cH}{\mathscr{H}}
\DeclareMathOperator{\A}{\alpha}
\DeclareMathOperator{\w}{\omega}
\DeclareMathOperator{\V}{\mathcal{V}}
\DeclareMathOperator{\cC}{\mathscr{C}}
\DeclareMathOperator{\dom}{\mathscr{D}}
\newcommand{\wt}{\widetilde}
\newcommand{\wh}{\widehat}
\newcommand{\ov}{\overline}
\newcommand{\D}{\Delta}
\renewcommand{\d}{\delta}
\newcommand{\la}{\langle}
\newcommand{\ra}{\rangle}
\newcommand{\ve}{\varepsilon}
\newcommand{\g}{\gamma}
\renewcommand{\a}{\alpha}
\newcommand{\G}{\Gamma}
\DeclareMathOperator{\Rm}{Rm}
\newcommand{\p}{\partial}
\newcommand{\n}{\nabla}
\renewcommand{\supp}{\operatorname{supp}}
\newcommand{\s}{\sigma}
\DeclareMathOperator{\ho}{\mathcal{C}^{\A}_{\textup{ie}}}
\DeclareMathOperator{\hok}{\mathcal{C}^{k, \A}_{\textup{ie}}}
\numberwithin{equation}{section}
\definecolor{qqwuqq}{rgb}{0,0,0}
\begin{document}
	
	\date{\today}
	
	\title[Bounded Ricci Curvature and Positive Scalar Curvature under Ricci Flow]
	{Bounded Ricci Curvature and Positive Scalar Curvature under Singular Ricci de Turck Flow}

	\author{Klaus Kr\"oncke}
	\address{University Hamburg, Germany}
	\email{klaus.kroencke@uni-hamburg.de}
	
\author{Tobias Marxen}
	\address{University Oldenburg, Germany}
	\email{tobias.marxen@uni-oldenburg.de}

	\author{Boris Vertman}
	\address{University Oldenburg, Germany}
	\email{boris.vertman@uni-oldenburg.de}

	\thanks{Partial support by DFG Priority Programme "Geometry at Infinity"}
	
	\subjclass[2010]{Primary 53C44; Secondary 53C25; 58J35.}
	\keywords{Ricci flow, positive scalar curvature, conical singularities}
	
	\begin{abstract}
	In this paper we consider a Ricci de Turck flow of spaces with isolated conical singularities, 
	which preserves the conical structure along the flow. We establish that a given initial regularity of Ricci 
	curvature is preserved along the flow. Moreover under additional assumptions, positivity of scalar curvature 
	is preserved under such a flow, mirroring the standard property of Ricci flow on compact manifolds.  
	The analytic difficulty is the a priori low regularity of scalar curvature at the conical tip along the flow, so that
	the maximum principle does not apply. We view this work as a first step toward studying 
	positivity of the curvature operator along the singular Ricci flow.
	\end{abstract}
	
\maketitle
\tableofcontents

\section{Introduction and statement of main results} \medskip

Consider a compact smooth Riemannian manifold $(M,g)$ without boundary. 
Its Ricci flow is a smooth family of metrics $(g(t))_{t\geq 0}$ such that 
\begin{align}\label{RF}
\partial_t g(t) = -2 \textup{Ric} (g(t)), \quad g(0) = g,
\end{align}
where $\textup{Ric} (g(t))$ denotes the Ricci curvature tensor of $g(t)$.
Due to diffemorphism invariance of the Ricci tensor, this evolution equation 
fails to be strongly parabolic. One overcomes this problem by adding an additional 
term to the equation which brakes the diffeomorphism invariance. This leads to
the equivalent and analytically more convenient Ricci de Turck flow
\begin{equation}\label{RTF}
\partial_t g(t) = -2 \textup{Ric} (g(t)) + \mathcal{L}_{W(t)} g(t), \quad g(0) = g,
\end{equation}
where $W(t)$ is the de Turck vector field defined in terms of the Christoffel symbols for 
the metrics $g(t)$ and a background metric $h$ 
\begin{equation}
W(t)^k = g(t)^{ij} \left(\Gamma^k_{ij}(g(t)) - \Gamma^k_{ij}(h)\right). 
\end{equation}
In the applications, the background metric $h$ is usually taken as the initial metric $g$, 
or as its small Ricci flat perturbation. The de Turck vector field defines 
a one-parameter family of diffeomorphisms $(\phi(t))_{t\geq 0}$
and if $(g(t))_{t\geq 0}$ is a solution to the Ricci de Turck flow \eqref{RTF}, then the pullback
$(\phi(t)^*g(t))_{t\geq 0}$ is a solution to the Ricci flow \eqref{RF}.
\medskip

On singular spaces, the de Turck vector field may point towards the singular stratum and 
thus lengths of the corresponding integral curves may not be bounded from below away from 
zero. Hence the one-parameter family of diffeomorphisms $(\phi(t))_{t\geq 0}$ may 
not exist for positive times. Therefore, in the singular setting \eqref{RF} and \eqref{RTF} are generally
not equivalent and we study the latter flow. \medskip

Our work establishes an interesting property of the flow, namely that a given 
initial regularity of the Ricci curvature is preserved along the flow. 
\begin{thm}\label{main2}
The singular Ricci de Turck flow preserves the initial regularity of the Ricci curvature. 
In particular, if the initial metric has bounded Ricci curvature, the flow remains of bounded 
Ricci curvature as well.
\end{thm}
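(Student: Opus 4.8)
The plan is to exploit the parabolic evolution equation satisfied by the Ricci tensor along the de Turck flow and to propagate the given initial regularity through it, using the mapping properties of the associated heat operator on the weighted incomplete‑edge Hölder spaces that already underlie the existence theory. It is convenient to formalise ``initial regularity of $\textup{Ric}$'' as membership of $\textup{Ric}(g_0)$ in a weighted Hölder space $x^{\nu}\ho$ of symmetric $2$‑tensors (with $\nu=0$ the bounded case), the goal being that $\textup{Ric}(g(t))$ stay in $x^{\nu}\ho$ on the interval of existence. The first ingredient is what the construction of $(g(t))_{t\ge 0}$ as a curve in the parabolic spaces $\hhok$ already provides: uniform control on compact time intervals of $g(t)$, its inverse, $\Rm(g(t))$, the de Turck field $W(t)$ and $\nabla W(t)$ in the natural weighted scales, together with the fact that the tangent cone at the tip is unchanged along the flow. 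Crucially, a priori $\Rm(g(t))$ — and hence $\textup{Ric}(g(t))$ — lies only in $x^{-2}\ho$, reflecting the conical blow‑up of curvature; it is exactly this factor $x^{-2}$ that must be beaten.

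\emph{Evolution equation.} Differentiating $\partial_t g = -2\,\textup{Ric}(g)+\mathcal L_{W}g$ and using the standard commutation identities, $\textup{Ric}(g(t))$ satisfies a linear‑in‑$\textup{Ric}$ parabolic system of the schematic shape
\begin{equation*}
\big(\partial_t-\Delta_{g(t)}\big)\,\textup{Ric}(g(t)) = \Rm(g(t))*\textup{Ric}(g(t)) + \nabla W(t)*\textup{Ric}(g(t)) + W(t)*\nabla\textup{Ric}(g(t)),
\end{equation*}
where $\Delta_{g(t)}$ is the rough Laplacian of $g(t)$ (equivalently one moves the curvature term to the left and works with the Lichnerowicz Laplacian). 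Near the tip this operator is a small perturbation of the connection Laplacian on symmetric $2$‑tensors of the fixed tangent cone, whose heat kernel is available through the incomplete‑edge heat calculus.

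\emph{Bootstrap in the weighted scale.} I would then solve by Duhamel against the model heat semigroup $e^{t\Delta_{g_0}}$, absorbing $\Delta_{g(t)}-\Delta_{g_0}$ into the right‑hand side,
\begin{equation*}
\textup{Ric}(g(t)) = e^{t\Delta_{g_0}}\textup{Ric}(g_0) + \int_0^t e^{(t-s)\Delta_{g_0}}\,F(s)\,ds ,
\end{equation*}
with $F$ built from the curvature and de Turck terms above, and establish two estimates. First, the homogeneous term stays in $x^{\nu}\ho$ because $e^{t\Delta_{g_0}}$ acts boundedly on $x^{\nu}\ho$ with the correct short‑time behaviour — here the indicial roots of the cone Laplacian on symmetric $2$‑tensors, equivalently the spectral data of the cross‑section, enter. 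Second, the Duhamel integral is controlled by a Grönwall argument: each summand of $F$ is a product of an a priori (weight‑)bounded factor with $\textup{Ric}$ or $\nabla\textup{Ric}$, and the time integration against the heat kernel simultaneously recovers the lost weight and produces the small constant that closes the iteration on a short interval; iterating over successive intervals gives the conclusion for all $t$, and tracing $\nu=0$ through the argument yields the stated statement about bounded Ricci.

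\emph{Main obstacle.} The difficulty is concentrated entirely at the conical tip, in the second estimate above. The curvature term $\Rm*\textup{Ric}$ is genuinely of size $x^{-2}$ times $\textup{Ric}$, so it cannot be treated as a lower‑order perturbation of the flat heat equation and the interior parabolic Schauder / maximum‑principle machinery is unavailable — this is the a priori low regularity of curvature at the tip alluded to in the abstract. One must instead verify, within the incomplete‑edge heat calculus, that the heat semigroup of the Laplacian of $g(t)$ (only an incomplete‑edge metric, not the exact cone) maps $x^{\nu}\ho$ to itself with short‑time bounds strong enough that the quadratic nonlinearity closes in this scale, uniformly along the flow, and that the weight $\nu$ of the initial Ricci regularity lies in the admissible window dictated by the indicial roots of the cross‑sectional operator. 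Establishing these mapping properties, uniformly along the flow, is the technical heart of the argument.
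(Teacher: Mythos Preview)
Your overall strategy---derive the evolution equation for $\textup{Ric}$ along the de Turck flow and then propagate regularity through the heat operator in weighted incomplete-edge H\"older spaces---is exactly the paper's strategy, and you correctly identify that the difficulty is concentrated at the tip. However, the execution you sketch differs from the paper's in a way that matters, and the Gr\"onwall step as you describe it does not close.

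Concretely, you propose to write Duhamel against the \emph{fixed} heat semigroup $e^{t\Delta_{g_0}}$ and absorb $\Delta_{g(t)}-\Delta_{g_0}$ into the inhomogeneity, closing by Gr\"onwall in a fixed weighted space $x^\nu\ho$. The leading part of $\Delta_{g(t)}-\Delta_{g_0}$ is a genuine second-order operator whose coefficients are $g(t)^{-1}-g_0^{-1}$; to close a contraction in $\hhok$ you would need these coefficients to be small in the H\"older norm, and they are not (only the $L^\infty$ norm is small for short time, while the spatial H\"older seminorm stays of order one). This is precisely why the paper does \emph{not} treat the time dependence perturbatively. Instead it (i) shows (Theorem~\ref{L expansion}) that the time-dependent Lichnerowicz Laplacian factors as $a^{-1/2}\widetilde\Delta_L\,a^{-1/2}$ plus lower-order terms carrying an \emph{extra weight} $x^{\overline\gamma}$, where $a$ is the pure-trace part of $g(t)$, and (ii) constructs a genuine parametrix $\mathcal Q$ for $\partial_t+a^{-1}L$ by the Bahuaud--Vertman freezing-of-coefficients technique: one localises near each boundary point $p$, replaces $a$ by the constant $a(p,0)$, and uses that on the small patch the H\"older norm of $a^{-1}-a^{-1}(p,0)$ \emph{is} small (Lemma~\ref{Ejp}, Proposition~\ref{Qb}). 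This is the missing mechanism behind your ``small constant''.

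A second difference is the logic of the improvement. The paper does not run Gr\"onwall in a fixed weight; it runs a \emph{weight bootstrap}. From the a priori regularity $\textup{Ric}\in x^{-2+\overline\gamma}$ and the structure of the right-hand side (only $W*\nabla\textup{Ric}$ and $\nabla W*\textup{Ric}$, since the $\Rm*\textup{Ric}$ term has been absorbed into $\Delta_L$), Corollary~\ref{XR} gives $(\partial_t+a^{-1}L)\,\textup{Ric}\in x^{-4+2\overline\gamma}$, hence $\mathcal Q$ produces a solution $\textup{Ric}'\in x^{-2+2\overline\gamma}$. One then identifies $\textup{Ric}=\textup{Ric}'$ by an $L^2$ energy estimate (integration by parts has no boundary terms for $n\ge3$), \emph{not} by a maximum principle or by presupposing the Duhamel representation for the singular solution---this uniqueness step is absent from your outline and is essential, since a priori one does not know that $\textup{Ric}$ agrees with the Duhamel formula. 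Iterating the weight gain $\overline\gamma\mapsto\min\{2\overline\gamma,\gamma\}$ finishes.
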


We are not able to deduce an analogous result for the scalar curvature.
The reason is that the norm of the Ricci tensor appears as the reaction term in the evolution 
equation of the scalar curvature. Thus, unbounded Ricci curvature at the singularity pushes 
the scalar curvature to infinity after infinitesimal time. In contrast, the evolution equation on the 
Ricci curvature is tensorial which allows more flexibility.
However, we are able to prove a different property of the scalar curvature along the Ricci flow 
which is well known in the smooth compact case.
\begin{thm}\label{main1}
An admissible Riemannian manifold with isolated conical singularities and positive scalar curvature 
admits a singular Ricci de Turck flow preserving the singular structure and unter the additional assumption of strong tangential stability, 
positivity of scalar curvature along the flow.
\end{thm}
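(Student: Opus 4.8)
The plan is to prove the two assertions of the theorem in turn: \emph{existence} of a singular Ricci--de Turck flow preserving the conical structure, and \emph{preservation of positive scalar curvature} along it. For existence I would invoke the short-time existence theory for the singular Ricci--de Turck flow on the admissible spaces with isolated conical singularities: strong tangential stability is precisely the hypothesis under which the flow exists on a maximal interval $[0,T)$, stays within the class of incomplete edge metrics with unchanged conical structure, and has uniform-in-$t$ control of its radial asymptotics. Combined with Theorem~\ref{main2} this controls the conical asymptotics of $\textup{Ric}(g(t))$, and hence the leading radial behaviour of $R=R(g(t))$ near the tip, along the whole flow.

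For the second assertion, recall that under the Ricci--de Turck flow the scalar curvature obeys
\begin{equation*}
\partial_t R \;=\; \Delta_{g(t)} R \;+\; \nabla_{W(t)} R \;+\; 2\,|\textup{Ric}(g(t))|^2, \qquad 2\,|\textup{Ric}(g(t))|^2 \;\geq\; \tfrac{2}{n}\,R^2 \;\geq\; 0 .
\end{equation*}
On a closed manifold the maximum principle would now immediately give that $\rho(t):=\inf_M R(\cdot,t)$ satisfies $\rho'(t)\geq\tfrac{2}{n}\rho(t)^2$ in the barrier sense, so that $\rho(0)>0$ forces $\rho(t)>0$. In the conical case the obstruction, and the main point of the proof, is that $R(t)$ is a priori of low regularity at the tip: it need not even be bounded, its infimum need not be attained, and where $R(t)$ appears to attain its infimum at the tip one cannot insert $\Delta R\geq 0$ there.

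To circumvent this I would develop a maximum principle adapted to the conical geometry. Near the tip the metric is a decaying perturbation of an exact cone $dr^2+r^2 g_N$, so $R(t)$ equals $r^{-2}\bigl(\scal_{g_N}-(n-1)(n-2)\bigr)$ to leading order, plus a remainder of strictly lower order relative to $r^{-2}$ that is controlled uniformly in $t$ by the regularity theory; the leading coefficient is nonnegative because $R(0)>0$, and it stays so, the conical structure --- in particular the link --- being preserved along the flow. When this coefficient is positive at every angular point, $R(t)$ is bounded below by a positive constant on a fixed neighbourhood of the tip for all $t\in[0,T)$, and on the complement --- a compact manifold with boundary --- the classical minimum principle applies with controlled boundary values, giving $R(t)>0$. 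In the borderline case one instead fixes a smooth positive weight $\phi$ on $M$ agreeing near the tip with a suitable positive model of the radial profile of $R$ and equal to $1$ away from it, and runs the minimum principle for $u:=R/\phi$: its evolution equation acquires a harmless first-order term in $\nabla u$ and a zeroth-order term $\phi^{-1}\bigl(\Delta\phi+\nabla_W\phi\bigr)u$, and $\phi$ is chosen so that this coefficient is bounded below (the delicate sign computation depending on the dimension, the link, and the subleading asymptotics), so that $u(t)$ attains its infimum in the interior, where the classical argument applies and yields $R(t)=u(t)\phi>0$ on $[0,T)$; the same device also recovers the sharp bound $\rho'(t)\geq\tfrac{2}{n}\rho(t)^2$ if one wishes.

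The step I expect to be the main obstacle is exactly this adapted maximum principle, i.e. ruling out that the infimum of $R(t)$ is lost at the tip: this requires sharp control, uniform in $t$, of the radial asymptotics of both $R(t)$ and the de Turck field $W(t)$, together with the correct sign of the leading and, in the borderline case, subleading coefficients of $R$ in terms of the link; the non-self-adjoint transport term $\nabla_W R$ is delicate precisely because $W$ points toward the singular stratum, and this is where strong tangential stability and the regularity theory underlying Theorem~\ref{main2} enter decisively. An alternative route to keep in reserve is a Duhamel argument: write $R(t)$ by the variation-of-constants formula for the Friedrichs heat semigroup of a reference edge Laplacian, use its positivity-preserving property together with $2|\textup{Ric}|^2\geq 0$, and absorb the coefficient-freezing and transport error terms perturbatively on short time intervals before iterating.
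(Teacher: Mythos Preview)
Your outline has the right overall shape and correctly isolates the main difficulty --- applying a parabolic minimum principle at the conical tip, where a priori $R(t)$ has low regularity --- but your proposed resolution takes a harder road than the paper's and is not actually carried out.

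First a minor point: existence of the flow does \emph{not} require strong tangential stability; ordinary (strict) tangential stability suffices for short-time existence (Theorem~\ref{existence}). Strong tangential stability is an additional hypothesis invoked only for the scalar-curvature argument.

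The substantive divergence is in how you handle the tip. You propose analysing the leading $r^{-2}$-asymptotics of $R$, observing that the leading coefficient is nonnegative, and in the ``borderline'' case introducing a weight $\phi$ matched to the radial profile of $R$, then running the minimum principle for $R/\phi$. Two issues: (i) in the admissible setting the exact cone is Ricci-flat (the link is Einstein with constant $n-1$), so the leading coefficient $\scal_{g_N}-(n-1)(n-2)$ in your expansion is \emph{always} zero and you are always in the borderline case; (ii) your weight-function scheme is not specified --- you say yourself that the sign of the zeroth-order term $\phi^{-1}(\Delta\phi+\nabla_W\phi)$ is ``delicate'' --- so the main step remains open.

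The paper avoids this entirely. The point of the Ricci-regularity theorem (Theorem~\ref{main2}/\ref{main2-explizit}) together with strong tangential stability (Assumption~\ref{strong-stability}, which permits weights $\gamma_0,\gamma_1>1$, and the standing hypothesis $\gamma>3$) is that one can choose $\gamma'>3$ with $\ric\in\cH^{k,\A}_{-2+\gamma'}$. Consequently the right-hand side $\langle W,\nabla R\rangle+2|\ric|^2$ of the scalar-curvature evolution lies in $\mathcal{C}^\A_{\textup{ie}}(M\times[0,T])$, i.e.\ is H\"older continuous up to the tip, and one shows (via a parametrix for $\partial_t+a^{-1}\wt{\D}$ and the results of \cite{BahVer}) that $R$ and $\Delta R$ themselves lie in $\mathcal{C}^\A_{\textup{ie}}$. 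No weight or barrier is needed: $R$ is genuinely continuous on $\overline{M}$, its minimum is attained, and the maximum principle of \cite[Theorem~3.1]{BahVer} applies directly. The remaining wrinkle is the drift term $\langle W,\nabla R\rangle$ when the minimum is attained at the tip; here one does not argue that $\nabla R$ vanishes, but rather that $\langle W,\nabla R\rangle\in\mathcal{C}^\A_{\textup{ie}}(M\times[0,T])_{\gamma'-3}$ with $\gamma'>3$, so the whole term vanishes at $x=0$.

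In short: you are trying to engineer a maximum principle for a function that might blow up at the tip, whereas the paper first \emph{proves} that the function does not blow up --- indeed extends H\"older-continuously --- and then applies an off-the-shelf maximum principle. Your Duhamel alternative is closer in spirit to what the paper does at the level of parametrices, but you would still need the regularity bootstrap to control the reaction term $2|\ric|^2$, and that is exactly Theorem~\ref{main2}.
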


The present work is a continuation of a research program on singular 
Ricci flow, that preserves the initial singular structure, that has seen 
several recent advances. In the setting of surfaces with conical singularities, singular 
Ricci flow has been studied by Mazzeo, Rubinstein and Sesum in \cite{MRS} and Yin \cite{Yin:RFO}. 
The Yamabe flow, which coincides with the Ricci flow in the two-dimensional setting, has been studied in general dimension on spaces with edge singularities
by Bahuaud and the third named author in \cite{BV} and \cite{BahVer}. In the setting of K\"ahler manifolds, K\"ahler-Ricci flow on spaces with edge singularities 
appears in the recent results on the Calabi-Yau conjecture on Fano manifolds, cf. Donaldson \cite{Donaldson}, Tian \cite{Tian}, see also Jeffres, Mazzeo and 
Rubinstein \cite{JMR}. K\"ahler-Ricci flow in case of isolated conical singularities has been addressed by Chen and Wang \cite{Wang1}, 
Wang \cite{Wang2}, as well as Liu and Zhang \cite{LZ}. \medskip

Ricci flow on singular spaces of general dimension, without the K\"ahler condition, does not 
reduce to a scalar equation and has been studied in \cite{Ver-Ricci} by the third named author.
Subsequently, the first and the third named authors in \cite{Klaus-Vertman, Klaus-Vertman2} established stability and studied 
solitons and Perelman entropies of this flow in the special case of isolated conical singularities. 
The present work is a continuation of this research program. \medskip

Let us point out that Ricci flow preserving the initial singular structure, is not the only 
possible way to evolve a singular metric. In fact, Giesen and Topping \cite{Topping, Topping2} construct a solution to the Ricci flow on 
surfaces with singularities, which becomes instantaneously complete. Alternatively, Simon \cite{MS} constructs Ricci flow in dimension 
two and three that smoothens out the singularity.

\section{Geometric preliminaries on conical manifolds} \medskip

We begin with a definition of spaces with isolated conical singularities.
We point out that part of our analysis in fact applies to spaces with non-isolated conical 
singularities, the so-called edges.

\begin{defn}\label{cone-metric}
Let $M$ be the open interior of a compact smooth manifold $\overline{M}$ with boundary 
$F := \partial M$. 
Let $x$ be a boundary defining function and $\ov{\cC(F)}$ a tubular neighborhood of the boundary with 
open interior $\cC(F) := (0,1)_x \times F$. An incomplete Riemannian metric $g$ on $M$ 
with an isolated conical singularity is a smooth metric on $M$ satisfying  
\begin{equation*}
g \restriction \cC(F) = dx^2 + x^2 g_F + h =: \overline{g} + h,
\end{equation*}
where the higher order term $h$ has the following asymptotics 
at $x=0$. Let $\overline{g} = dx^2 + x^2 g_F$ denote the exact conical part of the metric $g$ over 
$\cC(F)$ and $\nabla_{\overline{g}}$ the corresponding Levi Civita connection. 
Then we require that for some $\gamma > 0$ and all integer $k \in \N_0$ the pointwise norm 
\begin{align}\label{higher-order}
| \, x^k \nabla_{\overline{g}}^k h \, |_{\overline{g}} = O(x^\gamma), \ x\to 0. 
\end{align}
\end{defn}

\begin{remark}
	We emphasize here that we do not assume that the higher order term $h$
	is smooth up to $x=0$ and do not restrict the order $\gamma>0$ to be integer. 
	In that sense the notion of conical singularities in the present discussion is more 
	general than the classical notion of conical singularities where $h$ is usually assumed
	to be smooth up to $x=0$ with $\gamma = 1$. This minor generalization is necessary, 
	since the Ricci de Turck flow, which will be introduced below, preserves a conical
    singularity only up to a higher order term $h$ as above.
\end{remark}

We call $(M,g)$ a manifold with an isolated conical singularity, or a \textit{conical manifold} for short. 
The definition extends directly to conical manifolds with finitely many isolated conical singularities. Since the analytic arguments are local 
in nature, we may assume without loss of generality that $M$ has a single conical singularity only. \medskip

Let $(z)=(z_1,\ldots, z_n)$ be local coordinates on $F$, where $n=\dim F$. Then $(x,z)$ are 
local coordinates on the conical neighborhood $\cC(F) \subset M$. A b-vector field is by definition a smooth vector field on 
$\overline{M}$ which is tangent to the boundary $\partial M = F$. The b-vector fields form a Lie algebra, denoted by 
$\V_b$. In the local coordinates $(x,z)$, the algebra $\V_b$ is locally, near $\partial M$, generated by 
\[
\left\{x\frac{\partial}{\partial x},  
\partial_z = \left( \frac{\partial}{\partial z_1},\dots, \frac{\partial}{\partial z_n} \right)\right\},
\]
with smooth coefficients on $\overline{M}$. 
The b-tangent bundle ${}^bTM$ is defined by requiring that the 
b-vector fields form a spanning set of sections for it, i.e. $\mathcal{V}_b=C^\infty(\overline{M},{}^bTM)$. 
The b-cotangent bundle ${}^bT^*M$ is the dual bundle and locally, near $\partial M$, generated by the one-forms
\begin{align}\label{triv}
\left\{\frac{dx}{x}, dz_1,\dots,dz_n\right\}.
\end{align}
Note that the differential form $\frac{dx}{x}$ is singular in the usual sense, 
but smooth as a section of ${}^bT^*M$. Extend the radial function $x$ of the 
cone $\cC(F)$ to a nowhere vanishing smooth function $x: \overline{M} \to [0,2]$.
Then we define the incomplete 
b-tangent space ${}^{ib}TM$ by asking that $C^\infty(\overline{M},{}^{ib}TM) := x^{-1}
C^\infty(\overline{M},{}^{b}TM)$. The dual bundle, the incomplete b-cotangent bundle 
${}^{ib}T^*M$, is related to its complete counterpart ${}^bT^*M$ by 
$C^\infty(\overline{M},{}^{ib}T^*M) = x C^\infty(\overline{M},{}^{b}T^*M)$,
and is locally generated by  
\begin{align}\label{triv2}
\left\{dx, x dz_1,\dots, x dz_n\right\}.
\end{align}

\section{Weighted H\"older spaces on conical manifolds}\label{spaces-section}

In this section we review definitions from \cite[Section 1.3]{Ver-Ricci} in the case of isolated conical singularities. 
Let $(M,g)$ be a manifold with isolated conical singularities. 

\begin{defn}\label{hoelder-A}
Let $d_M(p,p')$ denote the distance between two points $p,p'\in M$ with respect to the conical metric $g$. In terms of the local coordinates 
$(x,z)$ over the singular neighborhood $\cC(F)$, the distance can be estimated up to a constant uniformly from above and 
below by the distance function of the model cone 
\begin{align*}
d((x,z), (x',z'))=\left(|x-x'|^2+(x+x')^2|z-z'|^2\right)^{\frac{1}{2}}.
\end{align*}
The H\"older space $\ho(M \times [0,T]), \A\in [0,1),$ is by definition the set of functions 
$u(p,t)$ that are continuous on $\overline{M} \times [0,T]$ with finite $\A$-th H\"older 
norm
\begin{align}\label{norm-def}
\|u\|_{\A}:=\|u\|_{\infty} + \sup \left(\frac{|u(p,t)-u(p',t')|}{d_M(p,p')^{\A}+|t-t'|^{\A/2}}\right) <\infty,
\end{align}
with supremum taken over all $p, p' \in M$ and $t,t' \in [0,T]$ with $p \neq p'$ and $t \neq t'$.
\footnote{We can assume without loss of generality that 
the tuples $(p,p')$ are always taken from within the same coordinate 
patch of a given atlas.}
\end{defn} 

We now extend the notion of H\"older spaces to sections of the  
vector bundle $S:=\textup{Sym}^2({}^{ib}T^*M)$ of symmetric $2$-tensors. Note that the 
Riemannian metric $g$ induces a fibrewise inner product on $S$, which we also denote by $g$.

\begin{defn}\label{S-0-hoelder}
The H\"older space $\ho (M \times [0,T], S)$ is by definition the set of all sections $\w$ of
$S$ which are continuous on $\overline{M} \times [0,T]$, 
such that for any local orthonormal frame 
$\{s_j\}$ of $S$, the scalar functions $g(\w,s_j)$ are in $\ho (M \times [0,T])$.
\medskip

The $\A$-th H\"older norm of $\w$ is defined using a partition of unity
$\{\phi_j\}_{j\in J}$ subordinate to a cover of local trivializations of $S$, with a 
local orthonormal frame $\{s_{jk}\}$ over $\supp (\phi_j)$ for each $j\in J$. We put
\begin{align}\label{partition-hoelder-2}
\|\w\|^{(\phi, s)}_{\A}:=\sum_{j\in J} \sum_{k} \| g(\phi_j \w,s_{jk}) \|_{\A}.
\end{align}
\end{defn}

Different choices of $(\{\phi_j\}, \{s_{jk}\})$ lead to equivalent norms so that we may drop the upper index 
$(\phi, s)$ from notation. Next we come to weighted and higher order H\"older spaces of $S$-sections. 

\begin{defn}\label{funny-spaces0} 
	\begin{enumerate}
		\item The (hybrid) weighted H\"older space for $\gamma \in \R$ is
		\begin{align*}
		&\ho(M \times [0,T], S)_{\gamma}  := x^\gamma \ho(M \times [0,T], S)  \, \cap \, 
		x^{\gamma + \A} \mathcal{C}^0_{\textup{ie}}(M \times [0,T], S) \\
		&\textup{with H\"older norm} \  \| \w \|_{\A, \gamma} := \|x^{-\gamma} 
		\w\|_{\A} + \|x^{-\gamma-\A} \w\|_\infty.
		\end{align*}
		\item The weighted higher order H\"older spaces are defined by
		\begin{equation*}
		\begin{split}
		\hok (M \times [0,T], S)_\gamma := \ &\{\w\in \ho_{,\gamma} \mid  \{\V_b^j \circ (x^2 \p_t)^l\} \w \in \ho_{,\gamma} \\ &\textup{for all} \ j+2l \leq k \}, 
		\end{split}
		\end{equation*} 
		for any $\gamma \in \R$ and $k \in \N$. For any $\g > -\alpha$
		and $k \in \N$ we also define
			\begin{equation*}
			\begin{split}
			\hok (M \times [0,T], S)^b_\gamma := \ &\{\w\in \ho \mid  \{\V_b^j \circ (x^2 \p_t)^l\} \w \in \ho_{,\gamma} \\ &\textup{for all} \ j+2l \leq k \}. 
			\end{split}
			\end{equation*} 
		The corresponding H\"older norms are defined 
		using a finite cover of coordinate charts trivializing $S_0$ and a subordinate partition of 
		unity $\{\phi_j\}_{j \in J}$. By a slight abuse of notation, we 
		identify $\V_b$ with its finite family of generators over each coordinate chart. Writing  
		$\mathscr{D} :=\{\V_b^j \circ (x^2 \p_t)^l \mid j+2l \leq k\}$ the H\"older norm on $\hok (M \times [0,T], S)_\gamma$ is then given by
		\begin{equation}\label{higher Hoelder norms}
		\begin{split}
		\|\w\|_{k+\A, \gamma} = \sum_{j\in J} \sum_{X\in \mathscr{D}} \| X (\phi_j \w) \|_{\A, \gamma} + \|\w\|_{\A, \gamma}.
		\end{split}
		\end{equation} 
		For the H\"older norm on $\hok (M \times [0,T], S)^b_\gamma$
	    replace in \eqref{higher Hoelder norms} 
		$\|\w\|_{\A, \gamma}$ by $\|\w\|_{\A}$.
	\end{enumerate}
\end{defn} 

The H\"older norms for different choices of coordinate charts, the subordinate partition of unity or vector fields 
$\V_b$ are equivalent. Analogously we also consider time-independent H\"older spaces, which are 
denoted in the same way with $[0,T]$ deleted from notation above.
\medskip

The vector bundle $S$ decomposes into a direct sum of sub-bundles $S= S_0 \oplus S_1$,
where the sub-bundle $S_0=\textup{Sym}_0^2({}^{ib}T^*M)$
is the space of trace-free (with respect to the fixed metric $g$) symmetric $2$-tensors,
and $S_1$ is the space of pure trace (with respect to the fixed metric $g$) symmetric 
$2$-tensors. Definition \ref{funny-spaces0} extends verbatim to subbundles $S_0$ and $S_1$.

\begin{remark}
	The spaces presented here are slightly different from the spaces originally introduced
	in \cite{Ver-Ricci}. There, in case of $S_1$-sections, higher order weighted 
	H\"older spaces were defined in terms of $x^\g\ho$ instead of $\ho_{,\gamma}$.
	Here, we present a more unified definition, which will become much more convenient 
	below. The arguments of \cite{Ver-Ricci} still carry over to yield regularity statements 
	in these unified H\"older spaces.
\end{remark}

\begin{defn}\label{H-space}
	Let $(M,g)$ be a compact conical manifold, $\g_0,\g_1 \in \R$. In order to simplify 
	notation, we set $\hok (M\times [0,T], S)^b_{\gamma} := \hok (M\times [0,T], S)^b_{\gamma}$
	for $\g \leq -\alpha$, with the same convention for $S_1$ as well. Then we define the following 
	spaces.
	\begin{enumerate}
	\item If $(M,g)$ is not an orbifold, we set 
	\begin{align*} 
	\cH^{k, \A}_{\gamma_0,\gamma_1} (M\times [0,T], S) &:= 
	\hok (M\times [0,T], S_0)_{\gamma_0} \oplus 
	\hok (M\times [0,T], S_1)^b_{\gamma_1}.
	 \end{align*}
	 \item If $(M,g)$ is an orbifold, we set 
	\begin{align*} 
	\cH^{k, \A}_{\gamma_0,\gamma_1} (M\times [0,T], S) :=
	\hok (M\times [0,T], S_0)^b_{\gamma_0} \oplus
	\hok (M\times [0,T], S_1)^b_{\gamma_1}.
	\end{align*}
	\end{enumerate}
\end{defn}

Note that e.g. for the non-orbifold case, the different choice of spaces for the 
$S_0$ and $S_1$ components, simply ensures that 
the $S_1$ component is not (!) included in $x^{\gamma} 
\mathcal{C}^0_{\textup{ie}}$ for some positive weight $\gamma$. 
In case of orbifold singularities, this restriction is imposed on
both the $S_0$ and $S_1$ components. 
We can now impose regularity assumptions on our initial data $(M,g)$.
Definitions \ref{S-0-hoelder} and \ref{funny-spaces0} extend naturally
to associated bundles of ${}^{ib}TM$. Also we write $\cH^{k, \A}_{\gamma} \equiv \cH^{k, \A}_{\gamma,\gamma}$.

\begin{defn}
		Let $\a \in [0,1), k \in \N_0$ and $\g > 0$. A conical manifold $(M,g)$ is $(\a,\g,k)$ H\"older regular if the following two conditions are satisfied:
		\begin{itemize}
			\item[(i)] The $(0,4)$ curvature tensor 
			$ \Rm  \in \hok (M \times [0,T], \otimes^4 \ {}^{ib}T^*M)_{-2}.$ \\[-3mm]
			\item[(ii)] The Ricci curvature tensor 
			$\ric \in \cH^{k, \A}_{-2 + \gamma} (M\times [0,T], S)$. 
		\end{itemize}
	\end{defn}

We continue under that assumption from now on. 

\begin{remark}\label{Einstein}
The asymptotics of the Ricci curvature tensor, as a section of $S$, is generically
$O(x^{-2})$ as $x\to 0$. Hence $(\a,\g,k)$ H\"older regularity with $\gamma > 0$ 
in particular implies that the exact conical part $(\cC(F),\overline{g})$ 
must be Ricci-flat. This is equivalent to $(F^n,g_F)$ being Einstein with 
Einstein constant $n-1$. Moreover, the weight $\gamma$ corresponds to the weight in 
\eqref{higher-order}.
\end{remark}

\section{Lichnerowicz Laplacian and tangential stability}

Consider the right hand side $-2 \,\ric (g(t)) + \mathcal{L}_{W(t)}g(t)$ 
of the Ricci de Turck flow equation \eqref{RTF}. Write $W(t)= W(g(t),h)$
to indicate precisely what the de Turck vector field depends on. Choose $h=g$
as the background metric. Then, replacing $g(t) = g+s \w$ for some symmetric
$2$-tensor $\w$, the linearization 
of right hand side of \eqref{RTF} is given by 
\begin{equation}
\frac{d}{ds} \left( -2 \, \ric (g+s\w) + \mathcal{L}_{W(g+s\w,g)}(g+s\w) 
\frac{}{} \right) |_{s=0}=-\Delta_{L}\w,
\end{equation}
where $\Delta_L$ is an elliptic operator, 
which is known as the Lichnerowicz Laplacian of $g$, 
acting on symmetric $2$-tensors by
\[ 
\D_L\w_{ij} = \D\w_{ij} - 2 g^{pq} \Rm^r_{qij} \w_{rp} + g^{pq} R_{ip} \w_{qj} + g^{pq}R_{jp} \w_{iq},
\]
where $\D$ denotes the rough Laplacian, and $\Rm^r_{qij}$ and $R_{ij}$ denote the components of the $(1,3)$-Riemann curvature tensor and the Ricci tensor, respectively. 
Near the conical singularity $\D_L$ can be written as follows. 
We choose local coordinates $(x,z)$ over the singular neighborhood 
$\cC(F) = (0,1)_x \times F$. Consider a decomposition of compactly supported smooth sections $C^\infty_0(\cC(F), S \restriction \cC(F))$ 
\begin{equation}
\begin{split}
C^\infty_0(\cC(F), S \restriction \cC(F)) &\to C^\infty_0((0,1), C^\infty(F) 
\times \Omega^1(F) \times \textup{Sym}^2(T^*F)),\\
\w &\mapsto \left( \w(\partial_x, \partial_x), \w (\partial_x, \cdot ), 
\w (\cdot, \cdot)\right),
\end{split}
\end{equation}
where $\Omega^1(F)$ denotes differential $1$-forms on $F$.
Under such a decomposition, the Lichnerowicz Laplace operator 
$\Delta_L$ associated to the singular Riemannian metric $g$ 
attains the following form over $\cC(F)$
\begin{equation}
\Delta_L = - \frac{\partial^2}{\partial x^2} - \frac{n}{x} \frac{\partial}{\partial x}
+ \frac{\square_L}{x^2} + \mathscr{O},
\end{equation}
where $\square_L$ is a differential operator on $C^\infty(F) 
\times \Omega^1(F) \times \textup{Sym}^2(T^*F)$, depending only on the
exact conical part $\overline{g}$, and the 
higher order term is $\mathscr{O} \in x^{-2+\gamma} \V_b^2$ with H\"older regular
coefficients. 

\subsection{Tangential stability}\ \medskip

We can now impose a central analytic condition, under which existence
of singular Ricci de Turck flow has been established in \cite{Ver-Ricci}. 

\begin{defn}\label{tangential-stability-def}
	$(F,g_F)$ is called (strictly) tangentially stable if the tangential operator 
	$\square_L$ of the Lichnerowicz Laplacian on its cone restricted to tracefree tensors is 
	non-negative (resp. strictly positive).
\end{defn}

Tangential stability (in fact a smaller lower bound $\square_L \geq -((n-1)/2)^2$
would suffice) has a straighforward implication: for $\w \in C^\infty_0(\cC(F), S)$ we find
\begin{equation}\label{lower-bound1}
\begin{split}
\Delta_L  \w =
&\left(  \frac{\partial}{\partial x} + \frac{1}{x} \left( \sqrt{\square_L + \left(\frac{n-1}{2}\right)^2} + \frac{n-1}{2} \right) \right)^t \\
\circ &\left(  \frac{\partial}{\partial x} + \frac{1}{x} \left( \sqrt{\square_L + \left(\frac{n-1}{2}\right)^2} + \frac{n-1}{2} \right) \right)  \w. 
\end{split}
\end{equation}
Thus, tangential stability in particular implies that $\Delta_L$, acting on compactly supported smooth sections, has a lower bound, 
i.e. there exists some $C \in \R$ such that 
\begin{equation}\label{lower-bound}
\Delta_L  \restriction C^{\infty}_0(M,S) \geq C.
\end{equation}
For convenience of the reader we shall add here a complete characterization of
tangential stability, obtained by the first and third named authors in 
\cite{Klaus-Vertman}. Recall that by Remark \ref{Einstein}, the assumption of
$(\a,\g,k)$ H\"older regularity implies that $(F,g_F)$ is Einstein with 
Einstein constant $(n-1)$.

\begin{thm}
	Let $(F,g_F)$, $n\geq 3$ be a compact Einstein manifold with constant $(n-1)$. 
	We write $\Delta_E$ for its Einstein operator, and denote the Laplace Beltrami 
	operator by $\Delta$. Then $(F,g_F)$ is tangentially stable if and only if 
	$\mathrm{Spec}(\Delta_E|_{TT})\geq0$ and $\mathrm{Spec}(\Delta)\setminus 
	\left\{0\right\}\cap (n,2(n+1))=\varnothing$. Similarly, $(M,g)$ is strictly tangentially 
	stable if and only if $\mathrm{Spec}(\Delta_E|_{TT})>0$ and $\mathrm{Spec}
	(\Delta)\setminus \left\{0\right\}\cap [n,2(n+1)]=\varnothing$. 
\end{thm}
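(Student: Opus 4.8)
\emph{Proof idea.} The plan is to diagonalise the tangential operator $\square_L$ by adapting the Hodge--de Rham and York decompositions on the link $F$, and then to decide, sector by sector, when the resulting model operator is non-negative (resp. strictly positive). First I would write out $\square_L$ explicitly. Near the tip, decompose a symmetric $2$-tensor as $\omega = a\, dx^2 + 2x\, dx\cdot\phi + x^2 k$ with $a\in C^\infty(F)$, $\phi\in\Omega^1(F)$, $k\in\textup{Sym}^2 T^*F$, the powers of $x$ chosen so that the three components have fixed $\overline{g}$-length, and extract from $\Delta_L$ on the model cone $\overline{g}=dx^2+x^2 g_F$ its $x$-independent angular part acting on the triple $(a,\phi,k)$; this is $\square_L$. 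Since $F$ is Einstein with constant $n-1$ (Remark \ref{Einstein}), every curvature contribution collapses to a multiple of $g_F$, so $\square_L$ is assembled solely from the Laplacian $\Delta$ on functions, the Hodge Laplacian on $1$-forms, the Einstein operator $\Delta_E$ on symmetric $2$-tensors, the codifferential $\delta$ and the Killing operator $\delta^*$.

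Consequently $\square_L$ commutes with the $L^2$-orthogonal decompositions $\Omega^1(F)=dC^\infty(F)\oplus\Omega^1_{\mathrm{cc}}(F)$ and $\textup{Sym}^2 T^*F=C^\infty(F)\,g_F\oplus\delta^*\Omega^1(F)\oplus\textup{Sym}^2_{TT}(F)$, hence it is block diagonal with respect to three sectors: a \emph{tensor sector} of $TT$-tensors, a \emph{vector sector} pairing coclosed $1$-forms with their images under $\delta^*$, and a \emph{scalar sector} built from $C^\infty(F)$ in the $a$-slot, $dC^\infty(F)$, $C^\infty(F)\,g_F$ and $\delta^*dC^\infty(F)$. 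The tracefree constraint couples, inside the scalar sector only, the $a$-component to the pure-trace component via $a+\mathrm{tr}_{g_F}k=0$, and otherwise acts by discarding the pure-trace/radial directions.

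Now I would treat the sectors one at a time. On the tensor sector $\square_L$ reduces, after the identification $k\mapsto x^2k$ and cancellation of the lower-order cone contributions against the Lichnerowicz curvature term, to the Einstein operator $\Delta_E\!\restriction_{TT}$; hence $\square_L\!\restriction_{TT}\ge 0$ (resp. $>0$) exactly when $\mathrm{Spec}(\Delta_E|_{TT})\ge 0$ (resp. $>0$). On the scalar sector I would fix an eigenvalue $\mu$ of $\Delta$ on $C^\infty(F)$; the corresponding subspace is finite-dimensional (at most three-dimensional for $\mu>0$ once tracefreeness is imposed), and $\square_L$ acts there as an explicit symmetric matrix $A_\mu$ with entries polynomial in $\mu$ and $n$. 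Computing its characteristic polynomial, one finds $A_\mu\ge 0$ precisely when $\mu\notin(n,2(n+1))$ and $A_\mu>0$ precisely when $\mu\notin[n,2(n+1)]$; the mode $\mu=0$ is inspected separately, where the scalar sector collapses to the one-dimensional ``volume'' mode, on which $\square_L$ is non-negative and imposes no extra condition. On the vector sector one fixes an eigenvalue $\nu$ of the Hodge Laplacian on coclosed $1$-forms and checks that the resulting $2\times 2$ model matrix is non-negative for every $\nu$ occurring on an Einstein manifold with $\textup{Ric}=(n-1)g_F$ — using the Bochner lower bound for $1$-forms, with equality at Killing fields — so this sector never obstructs (strict) tangential stability either.

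Assembling the sectors, $\square_L$ restricted to tracefree tensors is non-negative (resp. strictly positive) if and only if both $\mathrm{Spec}(\Delta_E|_{TT})\ge 0$ (resp. $>0$) and $\mathrm{Spec}(\Delta)\setminus\{0\}\cap(n,2(n+1))=\varnothing$ (resp. $\cap\,[n,2(n+1)]=\varnothing$), which is the assertion. I expect the scalar sector to be the main obstacle: it requires the fairly long computation of $\Delta_L$ on the model cone in the $(a,\phi,k)$-variables, careful bookkeeping of the cross terms through which the cone geometry mixes the $dx^2$, $dx\cdot\phi$, pure-trace and $\delta^*d$ components, imposing the tracefree relation, and finally the diagonalisation of $A_\mu$ that pins down exactly the interval $(n,2(n+1))$ together with the correct behaviour at its endpoints in the strict versus non-strict case. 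A secondary point to be handled with care is completeness: that the York decomposition is genuinely preserved by $\square_L$ and that no tracefree mode falls outside the three sectors above.
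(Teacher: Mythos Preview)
Your approach is correct and is essentially the method the paper relies on: the theorem is quoted from \cite{Klaus-Vertman} without proof in the body, but the Appendix carries out precisely the computation you describe (for the stronger threshold $\square_L>n$), decomposing the trace-free $2$-tensors into the $\square_L$-invariant subspaces $V_{1,i}$ (TT-eigentensors of $\Delta_E$), $V_{3,i}$ (coclosed $1$-form eigensections paired with their $\delta^*$-images), and $V_{4,i}$ (built from Laplace--Beltrami eigenfunctions), on which $\square_L$ acts respectively as the scalar $\kappa_i$, an explicit $2\times2$ symmetric matrix, and an explicit $3\times3$ symmetric matrix, and then deciding positivity via principal minors. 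Your claim that the vector sector never obstructs at the threshold $0$ is correct --- the $2\times2$ quadratic form there has determinant $(\mu-(n-1))^2(\mu+n-1)\ge 0$ for every connection-Laplacian eigenvalue $\mu\ge 0$ --- and this is exactly why only $\Delta_E|_{TT}$ and $\Delta$ appear in the statement, whereas at the threshold $n$ treated in the Appendix the vector sector does contribute the additional $1$-form condition.
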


There are plenty of examples, where (strict) tangential stability is satisfied. 
Any spherical space form is tangentially stable. \cite{Klaus-Vertman} also provides a
detailed list of strict tangentially stable Einstein manifolds that are symmetric spaces. 
Note that $\mathbb{S}^n$ is tangentially stable but not strictly tangentially stable. 

\begin{thm}
	Let $(F^n,g_F)$, $n\geq 2$ be a closed Einstein manifold with constant $(n-1)$, 
	which is a symmetric space of compact type. If it is a simple Lie group $G$, 
	it is strictly tangentially stable if $G$ is one of the following spaces:
	\begin{align}
	\mathrm{Spin}(p)\text{ }(p\geq 6,p\neq 7),\qquad \mathrm{E}_6,
	\qquad\mathrm{E}_7,\qquad\mathrm{E}_8,\qquad \mathrm{F}_4.
	\end{align}
	If the cross section is a rank-$1$ symmetric space of compact type $G/K$, 
	$(M,g)$ is strictly tangentially stable if  $G$ is one of the following real Grassmannians
	\begin{equation}
	\begin{aligned}
	&\frac{\mathrm{SO}(2q+2p+1)}{\mathrm{SO}(2q+1)\times \mathrm{SO}(2p)}\text{ }(p\geq 2,q\geq 1),\qquad
	\frac{\mathrm{SO}(8)}{\mathrm{SO}(5)\times\mathrm{SO}(3)},\\
	&\frac{\mathrm{SO}(2p)}{\mathrm{SO}(p)\times \mathrm{SO}(p)}\text{ }(p\geq 4),\qquad
	\frac{\mathrm{SO}(2p+2)}{\mathrm{SO}(p+2)\times \mathrm{SO}(p)}\text{ }(p\geq 4)\\
	&\frac{\mathrm{SO}(2p)}{\mathrm{SO}(2p-q)\times \mathrm{SO}(q)}\text{ }(p-2\geq q\geq 3),
	\end{aligned}
	\end{equation}
	or one of the following spaces:
	\begin{equation}
	\begin{aligned}
	\mathrm{SU}(2p)/\mathrm{SO}(p)\text{ }(n\geq 6),\qquad
	&\mathrm{E}_6/[\mathrm{Sp}(4)/\left\{\pm I\right\}],\qquad \quad
	\mathrm{E}_6/\mathrm{SU}(2)\cdot \mathrm{SU}(6),\\
	\mathrm{E}_7/[\mathrm{SU}(8)/\left\{\pm I\right\}],\qquad&
	\mathrm{E}_7/\mathrm{SO}(12)\cdot\mathrm{SU}(2),\qquad
	\mathrm{E}_8/\mathrm{SO}(16),\\
	\mathrm{E}_8/\mathrm{E}_7\cdot \mathrm{SU}(2),\qquad&
	\mathrm{F}_4/Sp(3)\cdot\mathrm{SU}(2).
	\end{aligned}
	\end{equation}
\end{thm}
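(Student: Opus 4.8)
The plan is to reduce, through the characterization of strict tangential stability stated above, to two purely spectral conditions on $(F,g_F)$: strict positivity of the Einstein operator $\Delta_E$ restricted to transverse--traceless symmetric $2$-tensors, and the absence from $\Spec(\Delta)\setminus\{0\}$ of any number in the closed interval $[n,2(n+1)]$, with $\Delta$ the scalar Laplace--Beltrami operator. Both conditions are accessible by harmonic analysis on $G/K$: by Peter--Weyl together with Frobenius reciprocity, the $L^2$-sections of any homogeneous vector bundle over $G/K$ decompose into $G$-isotypical pieces indexed by the irreducible $G$-representations containing the relevant $K$-type, and the natural second order geometric operators act on each piece as a Casimir scalar. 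Before computing anything one must normalise $g_F$ so that $\ric(g_F)=(n-1)g_F$, as forced by $(\a,\g,k)$ H\"older regularity (cf. Remark \ref{Einstein}); this fixes, once and for all, the ratio between the metric Laplacian and the Casimir operator of $\mathfrak{g}$, and is the source of all the bookkeeping below.

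For the first condition I would import the representation-theoretic description of $\Spec(\Delta_E|_{TT})$ on irreducible symmetric spaces of compact type together with the resulting classification of the (linearly) stable ones; this is the more substantial, but already available, input, going back to Koiso and made explicit in \cite{Klaus-Vertman}. In the group case $F=G$, realised as the symmetric space $(G\times G)/\Delta G$, the transverse--traceless tensors are carried by the $G$-representations occurring in the traceless part $\mathrm{Sym}^2_0(\mathfrak{g})$ of the symmetric square of the adjoint representation, and positivity of the relevant Casimir expression can be read off directly. For the second condition, the nonzero eigenvalues of $\Delta$ on functions on $G/K$ are the numbers $\la\Lambda,\Lambda+2\delta\ra$, in the normalisation above, with $\Lambda$ ranging over the highest weights of spherical representations, the latter described by the Cartan--Helgason theorem; in the group case the eigenvalues are $2\,\mathrm{Cas}(V)$ over all irreducible $G$-modules $V$, with the smallest nonzero one attained on the nontrivial irreducible representation of smallest Casimir. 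One then extracts, for each symmetric pair, the smallest nonzero eigenvalue $\lambda_1$ and verifies case by case that $\lambda_1>2(n+1)$ --- or, in the borderline situations, that no spherical eigenvalue at all lies in $[n,2(n+1)]$. The round sphere is instructive: $\lambda_1(\mathbb{S}^n)=n$ lies in the closed interval but not its interior, which is exactly why $\mathbb{S}^n$ is tangentially stable but not strictly so.

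With these two ingredients in hand the proof becomes a march through the Cartan classification: for each compact simple Lie group one computes $\mathrm{Cas}$ of the smallest representation and $\dim G$ in the Einstein normalisation and checks the gap inequality; for each irreducible symmetric space $G/K$ one determines the first spherical weight and its Casimir value and does the same; finally one intersects the list passing the Laplacian test with the list of stable spaces. The spaces surviving both tests turn out to be precisely $\mathrm{Spin}(p)$ for $p\ge 6$, $p\neq 7$, the exceptional groups $\mathrm{E}_6,\mathrm{E}_7,\mathrm{E}_8,\mathrm{F}_4$, and the enumerated real Grassmannians together with the further listed spaces; the groups $\mathrm{SU}(m)$, $\mathrm{Sp}(m)$, $\mathrm{G}_2$, $\mathrm{Spin}(5)$, $\mathrm{Spin}(7)$ and the remaining homogeneous spaces drop out, either because a spherical eigenvalue falls into $[n,2(n+1)]$ or because $\Delta_E|_{TT}$ already carries a non-positive eigenvalue.

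The main obstacle is the Laplacian gap. Unlike an open-ended lower bound, it is a two-sided constraint confining $\lambda_1$ to avoid an interval whose width grows only linearly in $\dim F$, while Casimir eigenvalues themselves grow roughly linearly in $\dim F$; the borderline families are therefore genuinely tight and cannot be disposed of by crude estimates --- one needs the exact Einstein normalisation and the exact Casimir values, and must match the geometer's convention $\ric=(n-1)g$ with the Lie-theoretic Killing-form scaling uniformly across all families. Carrying this normalisation consistently through the entire classification, rather than any single computation, is what makes the argument delicate.
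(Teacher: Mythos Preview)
This theorem is not proved in the present paper; it is quoted from \cite{Klaus-Vertman}, where the case-by-case verification is carried out. Your strategy is essentially the one used there: invoke the spectral characterization stated immediately above the theorem (strict tangential stability $\Leftrightarrow$ $\Delta_E|_{TT}>0$ and $\Spec(\Delta)\setminus\{0\}\cap[n,2(n+1)]=\varnothing$), and then verify both conditions for each entry in the Cartan classification by reading off the smallest relevant eigenvalues from tables computed via harmonic analysis on $G/K$. In practice the authors do not redo the Casimir computations from scratch but import the tabulated values of the normalised quantities $\Lambda$ (smallest nonzero Laplace eigenvalue divided by $n-1$) and $\Theta$ (smallest Lichnerowicz eigenvalue divided by $n-1$) from \cite{Klaus} and \cite{CaoHe}; the appendix of the present paper reproduces exactly this table-lookup procedure for the stronger condition $\square_L>n$, which gives you a template for what the actual proof looks like. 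So your proposal is correct in spirit and matches the paper's (cited) approach; the only refinement is that the eigenvalue data are taken ready-made from the literature rather than recomputed via Peter--Weyl and Cartan--Helgason each time.
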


\subsection{Self-adjoint extensions of the Lichnerowicz Laplacian}\ \medskip

We can now study the self-adjoint closed extensions of $\Delta_L$
acting on compactly supported sections $C^{\infty}_0(M,S)$.
We write $L^2(M,S)$ for the completion of $C^{\infty}_0(M,S)$ with respect to the  
$L^2$-norm $( \cdot, \cdot )_{L^2}$ defined by $g$. The maximal closed extension 
of $\Delta_L$ in $L^2(M,S)$ is defined by the following domain
\begin{equation}
\dom(\Delta_{L, \max}) := \{\w \in L^2(M,S) \mid \Delta_L \w \in L^2(M,S)\},
\end{equation}
where $\Delta_L \w$ is defined distributionally. 
The minimal closed extension of $\Delta_L$ in $L^2(M,S)$ is obtained as the domain of 
the graph closure of $\Delta_L$ acting on $C^{\infty}_0(M,S)$
\begin{equation*}
\begin{split}
\dom(\Delta_{L, \min}) := \{\w \in \dom(\Delta_{L, \max}) \mid \exists (\w_n)_{n\in \N} \subset C^{\infty}_0(M,S): \\
\w_n \xrightarrow{n\to \infty} \w, \quad \Delta_L \w_n \xrightarrow{n\to \infty} \Delta_L \w \ \textup{in} \ L^2(M,S)\}.
\end{split}
\end{equation*}

Let $(\lambda, \w_\lambda)$ be the set of eigenvalues and 
corresponding eigentensors of the tangential operator $\square_L$.
Assuming tangential stability, we have $\lambda \geq 0$, and hence we define
\begin{equation}\label{nu}
\nu(\lambda) := \sqrt{\lambda + 
	\left(\frac{n-1}{2}\right)^2}.
\end{equation}
Standard arguments, see e.g. \cite{KLP:FDG}, 
show that for each $\w \in \dom(\Delta_{L, \max})$
there exist constants $c^\pm_\lambda, \nu(\lambda) \in [0,1)$, 
depending only on $\w$, such that $\w$ admits a partial asymptotic expansion as $x\to 0$
\begin{equation}\label{cone-asymptotics}
\begin{split}
\w & = \sum_{\nu(\lambda) = 0} \left(c^+_{\lambda}(\w) x^{ - \frac{(n-1)}{2}}
+ c^-_{\lambda}(\w) x^{ - \frac{(n-1)}{2}} \log(x) \right) \cdot \omega_\lambda 
\\ & + \sum_{\nu(\lambda) \in (0,1)} \left(c^+_{\lambda}(\w) x^{\nu(\lambda) - \frac{(n-1)}{2}}
+ c^-_{\lambda}(\w) x^{-\nu(\lambda) - \frac{(n-1)}{2}} \right) \cdot \omega_\lambda 
\\ & + \widetilde{\w}, \quad \widetilde{\w} \in \dom(\Delta_{L, \min}).
\end{split}
\end{equation}
All self-adjoint extensions for $\Delta_L$ can be classified by 
algebraic conditions on the coefficients in the asymptotic 
expansion \eqref{cone-asymptotics}, see e.g. Kirsten, 
Loya and Park \cite[Proposition 3.3]{KLP:FDG}. The Friedrichs
self-adjoint extension of $\Delta_L$ on $C^\infty_0(M,S) \subset L^2(M,S)$ 
is given by the domain
\begin{equation}\label{Friedrichs-domain}
\dom(\Delta_{L}) := \{\w \in \dom(\Delta_{L, \max}) \mid 
c^-_{\lambda}(\w) = 0 \ \textup{for} \ \nu(\lambda) \in [0,1)\}.
\end{equation}
Note that if $n\geq 3$, then tangential stability implies that 
all $\nu(\lambda) \geq 1$. Hence the minimal and maximal domains coincide and
we find
\begin{equation}\label{Friedrichs-domain}
\dom(\Delta_{L}) = \dom(\Delta_{L, \max}) = \dom(\Delta_{L, \min}),
\quad \textup{for} \ n\geq 3.
\end{equation}

We close the section with an observation by Friedrichs and 
Stone, see Riesz and Nagy \cite[Theorem on p. 330]{RN}, cf. 
the corresponding statement in our previous work \cite[Proposition 2.2]{Klaus-Vertman}.

\begin{prop}\label{Friedrichs}
	Assume that $(M,g)$ is tangentially stable, so that by \eqref{lower-bound} the Lichnerowicz 
	Laplacian $\Delta_L$ with domain $C^\infty_0(M,S)$ is bounded from 
	below by a constant\footnote{The case of $C=0$ is commonly referred 
		to as linear stability in the literature.} $C \in \R$. 
	Then the Friedrichs self-adjoint extension of the Lichnerowicz Laplacian 
	$\Delta_L$ is bounded from below by $C$ as well.
\end{prop}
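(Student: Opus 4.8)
The plan is to derive the statement from the classical Friedrichs--Stone theorem \cite{RN}, whose short proof I would reproduce in the present setting. By \eqref{lower-bound}, tangential stability gives $(\Delta_L \eta, \eta)_{L^2} \geq C \|\eta\|_{L^2}^2$ for all $\eta \in C^{\infty}_0(M,S)$. The first step is a cosmetic reduction: replacing $\Delta_L$ by $\Delta_L - (C-1)\identity$ shifts every self-adjoint extension, and in particular the Friedrichs extension, by the constant $(C-1)$, so I may assume $C = 1$, i.e. strict positivity $(\Delta_L \eta, \eta)_{L^2} \geq \|\eta\|_{L^2}^2$.

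Next I would introduce the densely defined symmetric sesquilinear form $q(\eta, \xi) := (\Delta_L \eta, \xi)_{L^2}$ on $C^{\infty}_0(M,S)$, which satisfies $q(\eta, \eta) \geq \|\eta\|_{L^2}^2$. Because $\Delta_L$ is symmetric on $C^{\infty}_0(M,S)$, the form $q$ is closable; I would verify this together with the fact that the completion $Q$ of $C^{\infty}_0(M,S)$ in the norm $\|\eta\|_Q := q(\eta,\eta)^{1/2}$ embeds injectively and continuously into $L^2(M,S)$ --- the continuity being immediate from $\|\cdot\|_Q \geq \|\cdot\|_{L^2}$, the injectivity being the usual argument: for a $Q$-Cauchy sequence $\eta_n \in C^{\infty}_0(M,S)$ with $\eta_n \to 0$ in $L^2(M,S)$, one writes $\|\eta_n\|_Q^2 = q(\eta_n, \eta_n - \eta_m) + (\eta_n, \Delta_L \eta_m)_{L^2}$ and sends $n \to \infty$, then $m \to \infty$. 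The closed form $\overline{q}$ on $Q$ is then represented, via Kato's first representation theorem, by a unique self-adjoint operator $A$ with $\dom(A) \subset Q$ and $(A\eta, \xi)_{L^2} = \overline{q}(\eta, \xi)$ for $\eta \in \dom(A)$, $\xi \in Q$; this $A$ is precisely the Friedrichs extension of $\Delta_L$, and one checks it is the extension singled out by the domain condition \eqref{Friedrichs-domain} (in the range $n \geq 3$ relevant here it is in any case the unique self-adjoint extension, by \eqref{Friedrichs-domain}).

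The final step is the lower bound itself: for $\eta \in \dom(A) \subset Q$ and any sequence $\eta_n \in C^{\infty}_0(M,S)$ with $\eta_n \to \eta$ in $Q$, continuity of the embedding gives $\eta_n \to \eta$ in $L^2(M,S)$, and hence
\[
(A\eta, \eta)_{L^2} = \overline{q}(\eta, \eta) = \lim_{n\to\infty} q(\eta_n, \eta_n) \geq \lim_{n\to\infty} \|\eta_n\|_{L^2}^2 = \|\eta\|_{L^2}^2,
\]
so $A \geq \identity$; undoing the shift from the first step gives that the Friedrichs extension of the original $\Delta_L$ is bounded below by $C$. I expect the only genuinely non-formal point to be the closability of $q$, equivalently the injectivity of $Q \hookrightarrow L^2(M,S)$, which is where the symmetry of $\Delta_L$ on $C^{\infty}_0(M,S)$ is really used; the remainder is the abstract form machinery together with the bookkeeping of the constant shift, and could equally well be quoted directly from \cite{RN} or \cite[Proposition 2.2]{Klaus-Vertman}.
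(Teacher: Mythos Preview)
Your proposal is correct and matches the paper's approach: the paper does not supply its own argument but simply records the proposition as a classical observation of Friedrichs and Stone, citing \cite[Theorem on p.~330]{RN} and \cite[Proposition 2.2]{Klaus-Vertman}. What you have written is precisely a careful reproduction of that classical form-closure argument, including the shift to $C=1$, the closability of the quadratic form via symmetry, and the passage to the limit for the lower bound --- so there is nothing to compare beyond noting that you have spelled out what the paper leaves to the references.
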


\section{Existence and regularity of singular Ricci de Turck flow}

The main result of \cite[Theorem 4.1]{Ver-Ricci}, see also 
\cite[Theorem 1.2]{Klaus-Vertman}, is existence and regularity 
of singular Ricci de Turck flow. Despite a slight difference of H\"older spaces
used here and in \cite{Ver-Ricci}, we still conclude from  
\cite[Theorem 4.1]{Ver-Ricci} that the heat operator of the Friedrichs extension
$\D_L$ maps
$$
e^{-t\D_L}: \cH^{k, \A}_{-2+\gamma_0,-2+\gamma_1}(M \times [0,T], S) 
\to \cH^{k+2, \A}_{\gamma_0,\gamma_1}(M \times [0,T], S).
$$
Therefore existence and regularity obtained in 
\cite[Theorem 4.1]{Ver-Ricci}, as well as in
\cite[Theorem 1.2]{Klaus-Vertman} for a different choice of
a background metric, still hold in our spaces.

\begin{thm}\label{existence}
	Let $(M,g)$ be a conical manifold, which is tangentially stable and 
	$(\alpha, k+1,\gamma)$ H\"older regular. In case the conical singularity 
	is not orbifold, we assume strict tangential stability.
	Let the background metric be either equal to $g$ or a conical Ricci flat metric, in which case $g_0$ is assumed to be a sufficiently small perturbation of $\widetilde{g}$ in $\cH^{k+2, \A}_{\gamma,\g} (M, S)$.
	Then there exists some $T>0$, such that the Ricci de Turck flow \eqref{RTF}, 
	starting at $g$ admits a solution $g(\cdot) \in \cH^{k+2, \A}_{\gamma_0,\gamma_1} 
	(M \times [0,T], S)$ for some $\gamma_0,\gamma_1 \in (0,\gamma)$ sufficiently small.
\end{thm}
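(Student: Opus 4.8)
The plan is to set up the Ricci de Turck flow as an abstract quasilinear parabolic equation on the scale of weighted H\"older spaces $\cH^{k,\A}_{\gamma_0,\gamma_1}$ and then solve it by a contraction mapping argument, exactly in the spirit of \cite[Theorem 4.1]{Ver-Ricci}, with the Lichnerowicz Laplacian $\Delta_L$ of the background metric playing the role of the leading linear operator. First I would write $g(t) = g + \w(t)$ (or $g = \widetilde{g} + \w$ in the Ricci-flat background case) and expand the right hand side of \eqref{RTF} as
\begin{equation*}
-2\,\ric(g+\w) + \mathcal{L}_{W(g+\w, h)}(g+\w) = -\Delta_L \w + Q(\w),
\end{equation*}
where $Q$ collects the quadratic and higher order remainder. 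The key structural input, already recorded in the excerpt, is that $\Delta_L$ near the cone has the model form $-\partial_x^2 - (n/x)\partial_x + x^{-2}\square_L + \mathscr{O}$ with $\mathscr{O} \in x^{-2+\gamma}\V_b^2$, that tangential stability (strict, in the non-orbifold case, to push all indicial roots $\nu(\lambda)$ strictly past $1$ and force $\dom(\Delta_{L,\min}) = \dom(\Delta_{L,\max})$) makes the Friedrichs extension the only reasonable choice and bounds it below, and — crucially — that the heat semigroup of this extension gains two b-derivatives and two units of weight,
\begin{equation*}
e^{-t\Delta_L} : \cH^{k,\A}_{-2+\gamma_0,-2+\gamma_1}(M\times[0,T],S) \to \cH^{k+2,\A}_{\gamma_0,\gamma_1}(M\times[0,T],S),
\end{equation*}
together with the analogous mapping property of the Duhamel (inhomogeneous) solution operator. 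These are quoted verbatim from \cite{Ver-Ricci}, and the remark in the excerpt asserts the proofs there carry over to the slightly modified unified spaces, so I may take them as given.

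With the linear theory in hand, the argument is a fixed point for the map
\begin{equation*}
\Phi(\w)(t) = e^{-t\Delta_L}\w(0) + \int_0^t e^{-(t-\tau)\Delta_L} Q(\w(\tau))\,d\tau
\end{equation*}
on a small ball in $\cH^{k+2,\A}_{\gamma_0,\gamma_1}(M\times[0,T],S)$. The two things to check are: (1) the initial data lies in the right space — when $h = g$ this is automatic from $(\alpha,k+1,\gamma)$ H\"older regularity since $\w(0)=0$, and in the Ricci-flat background case it is exactly the smallness hypothesis $\|g_0 - \widetilde{g}\|_{\cH^{k+2,\A}_{\gamma,\gamma}}$ small; (2) the nonlinearity $Q$ maps $\cH^{k+2,\A}_{\gamma_0,\gamma_1}$ into $\cH^{k,\A}_{-2+\gamma_0,-2+\gamma_1}$ with a small Lipschitz constant on a small ball shrinking as $T\to 0$. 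Point (2) is where one must be careful about weights: schematically $Q(\w)$ is built from $g^{-1}$-contractions of $\w*\nabla^2\w$, $\nabla\w*\nabla\w$, and curvature terms $\Rm * \w * \w$; using that $\Rm \in \hok(\,\cdot\,)_{-2}$ and that the b-Hölder spaces form an algebra under the natural weight addition (and that inverting $g+\w$ near the identity stays in the $S_1$-type space with weight $0$), one gets that two $x\partial$-type derivatives on $\w$ cost $x^{-2}$, matching the target weight $-2+\gamma_i$; the leftover positive weight $\gamma_i - \gamma_i \cdot(\text{something})$, i.e. the genuine $O(x^{\gamma})$ gain, comes from the higher-order term $h$ in $g$ and from $\w$ itself lying in a space with strictly positive weight $\gamma_0,\gamma_1$, which is why one is forced to take $\gamma_0,\gamma_1 \in (0,\gamma)$ strictly smaller than the regularity exponent.

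I expect the main obstacle to be precisely this weighted mapping property of the quadratic remainder $Q$ across the splitting $S = S_0\oplus S_1$, together with the requirement that the solution land in the asymmetric space $\cH^{k,\A}_{\gamma_0,\gamma_1}$ (trace-free part genuinely in $x^{\gamma_0}\mathcal{C}^0_{\textup{ie}}$, pure-trace part only in the $(\cdot)^b$ space with no forced weight): one has to track that the conformal/trace part of $Q(\w)$, which does not decay, still lands in the $(\cdot)^b_{-2+\gamma_1}$ space and not worse, and that $e^{-t\Delta_L}$ respects the decomposition up to controlled off-diagonal terms. Once $\Phi$ is shown to be a contraction on a small ball for $T$ small — using the standard fact that the Duhamel term carries a positive power of $T$ — Banach's fixed point theorem yields the unique local solution $g(\cdot) \in \cH^{k+2,\A}_{\gamma_0,\gamma_1}(M\times[0,T],S)$, and since $g(\cdot)$ solves \eqref{RTF} with $C^\infty_0$-bounded-below $\Delta_L$ it automatically preserves the conical structure up to a higher-order term, completing the proof. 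The remaining bookkeeping (equivalence of norms under different partitions of unity, self-improvement of regularity from the smoothing of $e^{-t\Delta_L}$) is routine and follows \cite{Ver-Ricci} line by line.
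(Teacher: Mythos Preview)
Your proposal is correct and mirrors the paper's approach exactly: the paper does not give an independent proof of this theorem but records it as a restatement of \cite[Theorem 4.1]{Ver-Ricci} and \cite[Theorem 1.2]{Klaus-Vertman}, noting only that the heat-operator mapping property $e^{-t\Delta_L}:\cH^{k,\A}_{-2+\gamma_0,-2+\gamma_1}\to\cH^{k+2,\A}_{\gamma_0,\gamma_1}$ persists in the slightly modified unified H\"older spaces, after which the contraction-mapping argument from those references carries over verbatim. One small omission worth flagging: when $h=g$ the expansion of the right-hand side has a nonzero zeroth-order term $-2\,\ric(g)$ (since $W(g,g)=0$), which enters the Duhamel integral as an inhomogeneity and is controlled precisely by the $(\alpha,\gamma,k+1)$-regularity assumption $\ric(g)\in\cH^{k,\A}_{-2+\gamma}$; this does not affect your strategy but should appear in the fixed-point map.
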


Let us now explain in what sense the flow preserves the conical singularity.
Given an admissible perturbation $g$ of the conical metric $g_0$, the pointwise trace of $g$ 
with respect to $g_0$, denoted as $\textup{tr}_{g_0} g$ is by definition of admissibility an element of the 
H\"older space $\hok (M, S_1)^b_{\gamma}$, restricting at $x=0$ to a constant function $(\textup{tr}_{g_0} g)(0) = u_0>0$.
Setting $\widetilde{x} := \sqrt{u_0} \cdot x$, the admissible perturbation $g= g_0 + h$
attains the form 
$$g = d\widetilde{x}^2 + \widetilde{x}^2 g_F + \widetilde{h},$$
where $|\widetilde{h}|_g = O(x^\gamma)$ as $x\to 0$. Note that the leading part of the 
admissible perturbation $g$ near the conical singularity differs from the leading part of the admissible metric $g_0$ only by scaling. \medskip

We now wish to specify conditions on the weights $\gamma_0,\gamma_1$. 
Let us write $\square'_L$ for the tangential operator of the Lichnerowicz Laplacian
$\D_L$ acting on trace-free tensors $S_0$. The tangential operator of $\D_L$ acting on the
pure trace tensors $S_1$ is simply the Laplace Beltrami operator $\Delta_F$ of $(F,g_F)$.
We set $u_0:=\min(\Spec \, \square'_L \backslash\{0\})>0$ and $u_1:= \min(\Spec \, \D_F \backslash\{0\}) >0$. In the orbifold case we set $u:= \min \{u_0,u_1\}$. Define
\footnote{cf. \cite[Remark 2.5]{Ver-Ricci}}
\[ \mu_0 := \sqrt{u_0 + \frac{n-1}{2}} - \frac{n-1}{2}, \qquad \mu_1 := \sqrt{u_1 + \frac{n-1}{2}} - \frac{n-1}{2}. \]
Then the admissible choice of weights $(\gamma_0,\gamma_1)$
(in the orbifold case set $\gamma= \gamma_0 = \gamma_1$) and the H\"older exponent
$\alpha$ is given by the following restrictions. 

\begin{equation}\label{gamma01}
\begin{split}
& i) \quad \g_0 \in (0,\mu_0), \qquad \g_0 \le 2 \g_1, \qquad \g_0 < \g, \\
& ii) \quad \g_1 \in (0,\mu_1), \qquad \g_1 \le \g_0, \qquad \g_1 < \g, \\
& iii) \quad \a \in (0,\mu_0 - \g_0) \cap (0,\mu_1 - \g_1).
\end{split}
\end{equation}

\begin{rem}\label{DL1}
Note that in case of $u_0, u_1 > n$, i.e. $\square_L > n$, and assuming additionally 
$\gamma > 1$, we may choose $\g_0, \g_1 > 1$ satisfying \eqref{gamma01}. 
This stronger condition is studied in the Appendix \S \ref{strong-tangential},
where a list of examples is provided.
\end{rem}

We close the section by pointing out regularity of 
the de Turck vector field. As before we may define weighted 
higher order H\"older spaces of the incomplete $b$-cotangent bundle 
${}^{ib}T^*M$. By Theorem \ref{existence}, cf. \cite[\S 6]{Ver-Ricci}
\begin{align}\label{W-regularity} 
W(t) \in \mathcal{C}^{k+1, \A}_{\textup{ie}}(M \times [0,T], 
{}^{ib}T^*M)_{-1+\ov{\g}},
\end{align}
where $\ov{\g} := \min\{\g_0,\g_1\}$, and 
$\ov{\g}=\gamma$ in the orbifold case. 
As explained in the previous Remark \ref{DL1}, 
assuming $\square_L > n$ we can choose $\ov{\g}>1$, so that 
existence time of the integral curves of $W(t)$ is 
positive, uniformly bounded away from zero. In this case we may
pass from the Ricci de Turck flow back to Ricci flow, which 
is generally not clear in the singular setting.

\section{Evolution of curvatures under Ricci de Turck flow} \label{Evolution of the scalar curvature under Ricci de Turck flow} \medskip

The results of this section are well-known in the setting of smooth compact manifolds where the Ricci flow is defined. Evolution equations are usually proven by studying evolution of curvatures under the Ricci flow, and then the corresponding equations follow for the Ricci de Turck flow by applying the corresponding diffeomorphisms. In the singular setting there might be no globally well-defined diffeomorphism to go from the Ricci de Turck flow back to the Ricci flow, as the de Turck vector field may be pointing toward the singularity. Thus we need to establish the evolution of Ricci curvature under Ricci de Turck flow directly without passing back to the Ricci flow. The evolution of the scalar curvature is then a direct consequence.\medskip

\textit{Notation:} Let $g(t), t \in [0,T]$ be a Ricci de Turck flow of Riemannian metrics on $M$. We denote by $\n$, $\D$, $\D_L$ and $\G$ the covariant derivative, the Laplace Beltrami operator, the Lichnerowicz Laplacian and the Christoffel symbols, respectively, defined with respect to the metric $g(t)$. We let $|\cdot|$ be the norm with respect to $g(t)$. We denote the Riemann curvature tensor by $\Rm$, the Ricci tensor by $\ric$ and the scalar curvature by $R$, and we use the following conventions:
\[ \Rm(X,Y)Z := \n_X (\n_Y Z) - \n_Y (\n_X Z) - \n_{[X,Y]} Z, \]
\[ Rm(\frac{\p}{\p x^i}, \frac{\p}{\p x^j})\frac{\p}{\p x^k} = R^l_{ijk} \frac{\p}{\p x^l}. \]
Here and below we use the Einstein summation convention and sum over repeated upper and lower indices. We lower the upper index to the fourth slot
$R_{ijkl} := g_{pl} R^p_{ijk}$. Then the Ricci tensor $\ric$ 
and the scalar curvature $R$ are given by
\[ \ric_{jk}\equiv R_{jk} := R^i_{ijk} = g^{il} R_{ijkl}, \quad R := g^{jk}R_{jk}. \]

\begin{thm}
Let $M$ be a smooth manifold. Let $g(t)$, $t \in [0,T]$ be a solution of the Ricci de Turck flow with initial metric $g$ and reference metric $h$, i.e.
\begin{equation}\label{Ricci de Turck system}
\left\{\begin{array}{c}
\frac{\p}{\p t} g_{ij}(x,t) = 
\left(-2 R_{ij} + \n_i W_j + \n_j W_i\right)(x,t), \qquad  (x,t) \in M \times [0,T],  \\
g_{ij}(x,0) = g_{ij}(x), \qquad x \in M,
\end{array}\right.
\end{equation}
where $W(t)^k = g(t)^{ij} \left(\Gamma^k_{ij}(g(t)) - \Gamma^k_{ij}(h)\right)$ is the de Turck vector field. Then the Ricci tensor $\ric$ evolves by
\begin{equation}\label{evol equ Ricci}
(\p_t + \D_L) R_{jk} = \n_m R_{jk} W^m + R_{jm} \n_k W^m + R_{km} \n_j W^m.
\end{equation}
\end{thm}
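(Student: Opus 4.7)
The plan is to apply the general first-variation formula for the Ricci tensor to the metric evolution in \eqref{Ricci de Turck system}, split the resulting expression into the contribution of the $-2\,\ric$ term and the contribution of the de Turck Lie derivative term, and identify each piece.

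The starting point is the pointwise identity: for any smooth family of metrics $g(t)$ on $M$ with $\p_t g_{ij} = h_{ij}$,
\begin{equation*}
\p_t R_{jk} \;=\; \tfrac{1}{2}\, g^{pq}\!\left(\n_q \n_j h_{pk} + \n_q \n_k h_{pj} - \n_q \n_p h_{jk} - \n_j \n_k h_{pq}\right),
\end{equation*}
which is standard and follows from differentiating $R_{jk}$ in coordinates using the formula $\p_t \G^k_{ij} = \tfrac{1}{2} g^{k\ell}(\n_i h_{j\ell} + \n_j h_{i\ell} - \n_\ell h_{ij})$, together with a single commutation of covariant derivatives. With $h_{ij} = -2 R_{ij} + \n_i W_j + \n_j W_i$ I would split $h = h^{\mathrm{RF}} + h^{W}$ and analyse the two contributions in turn.

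For $h^{\mathrm{RF}}_{ij} = -2 R_{ij}$ the crucial tool is the twice-contracted second Bianchi identity $\n^p R_{pk} = \tfrac{1}{2}\n_k R$. Substituting into the variation formula, the two divergence-type terms and the trace term conspire so that the explicit Hessians of the scalar curvature cancel, leaving a rough Laplacian acting on $R_{jk}$ together with Riemann--Ricci commutator terms that arise from commuting a single pair of covariant derivatives. A direct index comparison shows that these terms reassemble into exactly $-\D_L R_{jk}$ in the sign convention for $\D_L$ fixed in Section~4; equivalently, this is the classical identity $(\p_t + \D_L) R_{jk} = 0$ along pure Ricci flow.

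For $h^{W}_{ij} = (\mathcal{L}_W g)_{ij}$ I would invoke diffeomorphism invariance rather than brute force. Freezing $t$ and letting $\phi_s$ be the local flow of $W(t)$, the identity $\ric(\phi_s^* g) = \phi_s^* \ric(g)$ differentiated at $s = 0$ gives that the linearisation of $\ric$ in the direction $\mathcal{L}_W g$ is $\mathcal{L}_W \ric$, which expanded reads
\begin{equation*}
(\mathcal{L}_W \ric)_{jk} \;=\; W^m \n_m R_{jk} \,+\, R_{jm}\n_k W^m \,+\, R_{km}\n_j W^m.
\end{equation*}
Adding the contributions from $h^{\mathrm{RF}}$ and $h^{W}$ and moving $\D_L R_{jk}$ to the left-hand side yields \eqref{evol equ Ricci}. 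The main obstacle will be the index book-keeping in the $h^{\mathrm{RF}}$ step: one must commute covariant derivatives carefully in order to pull out the Bianchi divergence, and then match the residual Riemann--Ricci contractions term-by-term against the curvature part of $\D_L$, paying attention to the sign convention used here for the rough Laplacian $\D$.
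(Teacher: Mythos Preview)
Your proposal is correct, and for the $-2\,\ric$ contribution it coincides with the paper's argument: both identify that piece as $-\D_L R_{jk}$ by appealing to the known evolution of $\ric$ under pure Ricci flow (the paper cites \cite[Lemma 6.9]{ChKn}). The difference lies in the treatment of the $\mathcal{L}_W g$ contribution. The paper computes it by brute force: it expands all eight third-order terms $g^{pq}\n_q\n_j\n_k W_p$, etc., commutes covariant derivatives repeatedly (equations \eqref{Ricci4}--\eqref{Ricci7}), groups the resulting curvature terms, applies the first and second Bianchi identities, and watches two full blocks of terms cancel to zero while the remainder assembles into $W^m\n_m R_{jk} + R_{jm}\n_k W^m + R_{km}\n_j W^m$. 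Your route---differentiating $\ric(\phi_s^*g) = \phi_s^*\ric(g)$ at $s=0$ to get $D\ric|_g(\mathcal{L}_W g) = \mathcal{L}_W\ric$---bypasses all of this and is both shorter and more conceptual. What the paper's computation buys is that it is entirely self-contained at the level of index manipulation and makes visible (as noted in the remark following Corollary~\ref{main theo}) that nothing about $W$ beyond its being a one-form is used; of course your diffeomorphism-invariance argument shares this feature, since it works for the flow of an arbitrary vector field.
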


\begin{proof}
By \cite[Lemma 3.5]{ChKn} the evolution of the Ricci curvature is given by
\begin{equation} \label{Ricci1}
\p_t R_{jk} = \frac{1}{2}g^{pq}(\n_q \n_j a_{kp} + \n_q \n_k a_{jp} - \n_p \n_q a_{jk} - \n_j \n_k a_{pq}),
\end{equation}
where the time-dependent $(0,2)$-tensor $a$ is the time-derivative of the metric, $a_{ij} := \frac{\p}{\p t} g_{ij}$. Using the Ricci de Turck flow equation $\frac{\p}{\p t}g_{ij} = -2R_{ij} + \n_i W_j + \n_j W_i$ we obtain
\begin{align}\label{Ricci2}
\p_t R_{jk} = & -g^{pq} (\n_q \n_j R_{kp} + \n_q \n_k R_{jp} - \n_p \n_q R_{jk} - \n_j \n_k R_{pq}) \\
& + \frac{1}{2}g^{pq} (\n_q \n_j \n_k W_p + \n_q \n_j \n_p W_k + \n_q \n_k \n_j W_p + \n_q \n_k \n_p W_j \nonumber \\ & \hspace{1.5cm}- \n_p \n_q \n_j W_k - \n_p \n_q \n_k W_j - \n_j \n_k \n_p W_q - \n_j \n_k \n_q W_p). \nonumber 
\end{align}
The first line on the right-hand side is the time-derivative of the Ricci tensor when the metric evolves by Ricci flow. Since it is known \cite[Lemma 6.9]{ChKn} that under Ricci flow the time-derivative of the Ricci tensor equals the Lichnerowicz Laplacian of the Ricci tensor, i.e. $\p_t R_{jk} = -\D_L R_{jk}$, it follows that 
\begin{equation}\label{Ricci3}
\D_L R_{jk} = g^{pq} (\n_q \n_j R_{kp} + \n_q \n_k R_{jp} - \n_p \n_q R_{jk} - \n_j \n_k R_{pq}).
\end{equation}
To simplify the second and third line on the right-hand side, we compute several terms by commuting covariant derivatives. We have
\begin{align}\label{Ricci4}
\n_p \n_j \n_p W_k & = \n_p (\n_q \n_j W_k - R^m_{jqk} W_m) \\
& = \n_p \n_q \n_j W_k - \n_p R^m_{jqk} W_m - R^m_{jqk} \n_p W_m. \nonumber
\end{align} 
Hence by exchanging $j$ and $k$
\begin{equation} \label{Ricci5}
\n_p \n_k \n_p W_j = \n_p \n_q \n_k W_j - \n_p R^m_{kqj} W_m - R^m_{kqj} \n_p W_m. 
\end{equation} 
Furthermore,
\begin{align}\label{Ricci6}
\n_p \n_j \n_k W_q & = \n_j \n_p \n_k W_q - R^m_{pjk} \n_m W_q - R^m_{pjq} \n_k W_m \\
& = \n_j (\n_k \n_p W_q - R^m_{pkq} W_m) - R^m_{pjk} \n_m W_q - R^m_{pjq} \n_k W_m  \nonumber \\
& = \n_j \n_k \n_p W_q - \n_j R^m_{pkq} W_m - R^m_{pkq} \n_j W_m - R^m_{pjk} \n_m W_q - R^m_{pjq} \n_k W_m. \nonumber
\end{align}
By exchanging $j$ and $k$ and commuting covariant derivatives we have
\begin{align}\label{Ricci7}
\n_p \n_k \n_j W_q & = \n_k \n_j \n_p W_q - \n_k R^m_{pjq} W_m - R^m_{pjq} \n_k W_m - R^m_{pkj} \n_m W_q - R^m_{pkq} \n_j W_m \nonumber \\
& = \n_j \n_k \n_p W_q - R^m_{kjp} \n_m W_q - R^m_{kjq} \n_p W_m \\
& \hspace{0.5cm} - \n_k R^m_{pjq} W_m - R^m_{pjq} \n_k W_m - R^m_{pkj} \n_m W_q - R^m_{pkq} \n_j W_m. \nonumber 
\end{align}
Now plugging \eqref{Ricci3}, \eqref{Ricci4}, \eqref{Ricci5}, \eqref{Ricci6}, \eqref{Ricci7} into \eqref{Ricci2} we obtain
\begin{align}\label{Ricci8}
\p_t R_{jk} & = -\D_L R_{jk} - \frac{1}{2} g^{pq} (\n_p R^m_{jqk} W_m + R^m_{jqk} \n_p W_m + \n_p R^m_{kqj} W_m + R^m_{kqj} \n_p W_m \nonumber\\
& \hspace{3.0cm}+ \n_j R^m_{pkq} W_m + R^m_{pkq} \n_j W_m + R^m_{pjk} \n_m W_q + R^m_{pjq} \n_k W_m \nonumber\\
& \hspace{3.0cm}+ R^m_{kjp} \n_m W_q + R^m_{kjq} \n_p W_m + \n_k R^m_{pjq} W_m + R^m_{pjq} \n_k W_m \nonumber\\
& \hspace{3.0cm}+ R^m_{pkj} \n_m W_q + R^m_{pkq} \n_j W_m). 
\end{align}
Simplifying and rearranging terms yields
\begin{align}\label{Ricci9}
\p_t R_{jk} & = -\D_L R_{jk} -\frac{1}{2}g^{pq} \n_q W_m (R^m_{jpk} + R^m_{kpj} + R^m_{kjp}) -\frac{1}{2} g^{pq} \n_m W_q (R^m_{pjk} + R^m_{kjp} + R^m_{pkj}) \nonumber\\
& \hspace{0.5cm}-\frac{1}{2}g^{pq} W_m (\n_p R^m_{jqk} + \n_p R^m_{kqj} + \n_j R^m_{pkq} + \n_k R^m_{pjq})\\
& \hspace{0.5cm}-g^{pq}(R^m_{pkq} \n_j W_m + R^m_{pjq} \n_k W_m). \nonumber
\end{align}
The second line on the right-hand side is 
\begin{align}\label{Ricci10}
& -\frac{1}{2}g^{pq} \n_q W_m (R^m_{jpk} + R^m_{kpj} + R^m_{kjp}) -\frac{1}{2} g^{pq} \n_m W_q (R^m_{pjk} + R^m_{kjp} + R^m_{pkj}) \\
= & -\frac{1}{2} \n^p W^m (R_{jpkm} + R_{kpjm} + R_{kjpm}) -\frac{1}{2} \n^m W^p (R_{pjkm} + R_{kjpm} + R_{pkjm}) \nonumber \\
= & -\frac{1}{2} \n^p W^m (R_{jpkm} + R_{kpjm} + R_{kjpm}) -\frac{1}{2} \n^p W^m (R_{mjkp} + R_{kjmp} + R_{mkjp}) \nonumber\\
= & -\frac{1}{2} \n^p W^m (R_{jpkm} + R_{kpjm} + R_{kjpm} - R_{kpjm} - R_{kjpm} - R_{jpkm}) \nonumber\\
= & \; 0,\nonumber
\end{align}
where in the penultimate line we used the symmetries of the curvature tensor.
The fourth line on the right-hand side is given by 
\begin{equation}\label{Ricci11}
-g^{pq}(R^m_{pkq} \n_j W_m + R^m_{pjq} \n_k W_m) = R_{km} \n_j W^m + R_{jm} \n_k W^m.
\end{equation} 
To compute the third line on the right-hand side, we first observe that by the symmetries of the curvature tensor and the second Bianchi identity
\begin{equation}\label{Ricci12}
\n_p R_{jqkm} = \n_p R_{kmjq} = -\n_m R_{pkjq} - \n_k R_{mpjq}.
\end{equation}
Moreover, exchanging $j$ and $k$ yields
\begin{equation}\label{Ricci13}
\n_p R_{kqjm} = -\n_m R_{pjkq} - \n_j R_{mpkq}.
\end{equation}
Now using \eqref{Ricci12} and \eqref{Ricci13} we obtain for the third line on the right-hand side 
\begin{align}\label{Ricci14}
& -\frac{1}{2}g^{pq} W_m (\n_p R^m_{jqk} + \n_p R^m_{kqj} + \n_j R^m_{pkq} + \n_k R^m_{pjq}) \\
= & -\frac{1}{2}g^{pq} W^m (-\n_m R_{pkjq} - \n_k R_{mpjq} -\n_m R_{pjkq} - \n_j R_{mpkq} + \n_j R_{pkqm} + \n_k R_{pjqm}) \nonumber\\
= & -\frac{1}{2} W^m (-\n_m R_{kj} + \n_k R_{mj} - \n_m R_{jk} + \n_j R_{mk} - \n_j R_{km} - \n_k R_{jm})\nonumber\\
= & \; W^m \n_m R_{jk}.\nonumber
\end{align}
Plugging \eqref{Ricci10}, \eqref{Ricci11} and \eqref{Ricci14} into \eqref{Ricci9} we finally obtain the claimed evolution equation of the Ricci tensor
\begin{equation*}
\p_t R_{jk} = -\D_L R_{jk} + \n_m R_{jk} W^m + R_{jm} \n_k W^m + R_{km} \n_j W^m.
\end{equation*}
\end{proof}

\begin{cor}\label{main theo}
Let $M$ be a smooth manifold. Let $g(t)$, $t \in [0,T]$ be a solution of the Ricci de Turck flow with initial metric $g$ and reference metric $h$, i.e.
\begin{equation}\label{Ricci de Turck system}
\left\{\begin{array}{c}
\frac{\p}{\p t} g_{ij}(x,t) = 
\left(-2 R_{ij} + \n_i W_j + \n_j W_i\right)(x,t), \qquad  (x,t) \in M \times [0,T],  \\
g_{ij}(x,0) = g_{ij}(x), \qquad x \in M,
\end{array}\right.
\end{equation}
where $W(t)^k = g(t)^{ij} \left(\Gamma^k_{ij}(g(t)) - \Gamma^k_{ij}(h)\right)$ is the de Turck vector field. Then the scalar curvature $R$ evolves by
\begin{equation}\label{evol equ}
(\p_t + \D) R = \la W, \n R \ra + 2 |\ric|^2.
\end{equation}
\end{cor}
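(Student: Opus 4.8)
The plan is to derive \eqref{evol equ} directly from the tensorial evolution equation \eqref{evol equ Ricci} by taking its $g(t)$-trace, the one subtlety being that the metric --- and hence the operation of tracing --- depends on $t$. So the first step is to differentiate $R = g^{jk}R_{jk}$ in time as a product, $\p_t R = (\p_t g^{jk})R_{jk} + g^{jk}\p_t R_{jk}$. Using $\p_t g^{jk} = -g^{jp}g^{kq}\p_t g_{pq}$ together with the Ricci de Turck equation \eqref{Ricci de Turck system}, I would rewrite $\p_t g^{jk} = 2R^{jk} - \n^j W^k - \n^k W^j$, so that the first summand contributes $2|\ric|^2 - 2 R_{jk}\n^j W^k$; this is precisely where the reaction term $2|\ric|^2$ originates.

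Next I would trace the right-hand side of \eqref{evol equ Ricci}. Since $\n g = 0$, the rough Laplacian commutes with metric contraction, so the transport term becomes $g^{jk}\n_m R_{jk}\,W^m = \la W, \n R\ra$, while the two first-order terms $R_{jm}\n_k W^m + R_{km}\n_j W^m$ trace to $2R_{km}\n^k W^m$, which I expect to cancel exactly against the $-2R_{jk}\n^j W^k$ coming from $\p_t g^{jk}$ (using symmetry of $\ric$). The remaining piece is $g^{jk}\D_L R_{jk}$: inserting the explicit form of $\D_L$ recalled above, the rough-Laplacian part gives $\D R$, and the two curvature reaction terms of $\D_L$ --- one built from the full Riemann tensor, one from two copies of the Ricci tensor --- should each reduce, via the pair symmetry of $\Rm$ and the contraction identity $g^{il}R_{ijkl}=R_{jk}$, to $\mp 2|\ric|^2$ and therefore cancel, leaving $g^{jk}\D_L R_{jk} = \D R$. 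Assembling all contributions yields $\p_t R = 2|\ric|^2 - \D R + \la W, \n R\ra$, which is \eqref{evol equ}.

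The computation is elementary, and there is no genuine analytic obstacle here: the corollary is stated for a smooth manifold, so this is a pointwise identity and no boundary or regularity issues enter. The only points that demand care are bookkeeping ones: not forgetting the $(\p_t g^{jk})R_{jk}$ contribution (it is the source of the $2|\ric|^2$ term), keeping the index conventions of the excerpt consistent when tracing $\D_L$ so that the two curvature terms genuinely cancel rather than leaving a spurious multiple of $|\ric|^2$, and verifying that the traced $W$-terms from \eqref{evol equ Ricci} precisely annihilate the $W$-term produced by the variation of $g^{jk}$.
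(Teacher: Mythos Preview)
Your proof is correct and follows essentially the same route as the paper: differentiate $R=g^{jk}R_{jk}$ as a product, feed in \eqref{Ricci de Turck system} and \eqref{evol equ Ricci}, and use $g^{jk}\D_L R_{jk}=\D R$. The only difference is that you spell out why the curvature reaction terms in $\D_L$ cancel upon tracing, whereas the paper simply quotes the identity $g^{jk}\D_L R_{jk}=\D R$.
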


\begin{proof}
The scalar curvature is given by $R = g^{jk} R_{jk}$. Since 
\[ \p_t g^{jk} = -g^{jp}g^{kq} \p_t g_{pq} = -g^{jp}g^{kq} (-2R_{pq} + \n_p W_q + \n_q W_p) \] 
and by the evolution equation for the Ricci tensor \eqref{evol equ Ricci} we thus obtain
\begin{align*}
\p_t R & = -g^{jp}g^{kq}(-2R_{pq} + \n_p W_q + \n_q W_p) R_{jk} \\
& \hspace{0.5cm}+ g^{jk} (-\D_L R_{jk} + \n_m R_{jk} W^m + R_{jm} \n_k W^m + R_{km} \n_j W^m) \\
& = 2|\ric|^2 -2g^{jp}g^{kq} \n_p W_q R_{jk} - g^{jk} \D_L R_{jk} + \n_m R \, W^m + 2 g^{jk} R_{jm} \n_k V^m \\
& = -\D R + \n_m R \, W^m + 2|\ric|^2,
\end{align*}
where in the last step we used $g^{jk} \D_L R_{jk} = \D R$.
\end{proof}
\begin{rem}
Note that in the proof we didn't use the special form of $W$, we just used that 
$W$ is a (time-dependent) one-form.
\end{rem}

\section{Regularity of Ricci curvature along the Ricci de Turck flow} \medskip

Our aim in this section is to improve the a priori low regularity of the Ricci curvature along the singular Ricci 
de Turck flow, as noted in \cite[Theorem 8.1]{Ver-Ricci}. 
Consider an $(\A, k+1, \gamma)$-H\"older regular conical manifold 
$(M,g)$, satisfying tangential stability, with singular Ricci de Turck flow
$g(\cdot ) \in \cH^{k, \A}_{\gamma_0,\gamma_1} (M\times [0,T], S)$, i.e.
decomposing $g(t)= (1+u)g + \w$ into trace and trace-free parts with respect to the initial 
metric $g$, we have
	\begin{align*}
	(\w, u) \in 
	\mathcal{C}^{k+2,\A}_{\textup{ie}}(M\times [0,T], S_0)_{\gamma_0}
	\oplus \mathcal{C}^{k+2,\A}_{\textup{ie}}(M\times [0,T], S_1)^b_{\gamma_1}.
	\end{align*}
	Recall the following transformation rule for the Ricci curvature tensor under 
	conformal transformations (setting $1+u = e^{2\phi}$ and noting that $\dim M = n+1$)
	\begin{equation}\label{conformal}
	\begin{split}
	\ric((1+u)g) = \ric(g) &-(n-1)
	\left( \nabla \partial \phi - \partial \phi \cdot \partial \phi \right)
	\\ &+ (\Delta_L \phi - (n-1) \|\nabla \phi\|^2 ) g.
	\end{split}
	\end{equation}
From here we conclude that $\ric ((1+u)g) - R(g) \in 
\mathcal{C}^{k,\a}_{\textup{ie}}(M \times [0,T],S)_{-2+\g_1}$.
Now consider $\ric((1+u)g + \w) - \ric((1+u)g)$, which is an intricate combination of $a$ and $\w$, 
involving their second order $x^{-2}\V^2$ derivatives. Hence that difference lies in $\mathcal{C}^{k,\a}_{\textup{ie}}(M \times [0,T],S)_{-2+\ov{\g}}$
with $\ov{\g}:= \min\{\gamma_0, \gamma_1\}$. We conclude
\begin{equation}\label{Ricci-regularity}
\begin{split}
&\ric(g) \in \mathcal{C}^{k+1,\a}_{\textup{ie}}(M \times [0,T],S)_{-2+\g}, \\
&\ric(g(t)) \in \mathcal{C}^{k,\a}_{\textup{ie}}(M \times [0,T],S)_{-2+\ov{\g}}, \ t> 0.
\end{split}
\end{equation}
In particular, e.g. if $\mu_0,\mu_1 \leq 2 \leq \gamma$, then $\ov{\g} \leq 2 \leq \gamma$.
In that case, the initial Ricci curvature $\ric (g)$ is bounded as a section of $S$, while
for positive times $\ric (g(t))$ is singular as a section of $S$. In this section we improve
this low regularity result.

\subsection{Expansion of the Lichnerowicz Laplacian} \medskip

Below we fix the following notation: 
A supscript " $\widetilde{}$ " indicates that the quantity is taken with respect to the initial metric $g(0)$. 
For example, $\wt{\n}$ and $\wt{\D_L}$ refer to the covariant derivative and the Lichnerowicz Laplacian with respect to $g(0)$. Otherwise the quantities
are defined with respect to the flow $g(t)$.

\begin{thm} \label{L expansion}
Consider an $(\A, k+1, \gamma)$-H\"older regular conical manifold 
$(M,g)$, satisfying tangential stability, with singular Ricci de Turck flow
$g(\cdot ) \in \cH^{k+2, \A}_{\gamma_0,\gamma_1} (M\times [0,T], S)$, i.e.
decomposing $g(t)= ag + \w$ into trace and trace-free parts with respect to the initial 
metric $g$. Then there exist 
\begin{align*}
&A \in \mathcal{C}^{k,\a}_{\textup{ie}}(M \times [0,T],S)_{\ov{\g}}, \\
&B \in \mathcal{C}^{k,\a}_{\textup{ie}}(M \times [0,T],S)_{-1+\ov{\g}},\\
&C \in \mathcal{C}^{k,\a}_{\textup{ie}}(M \times [0,T],S)_{-2+\ov{\g}},
\end{align*}
such that for any (time-dependent) symmetric $2$-tensor $c$ 
	\begin{align}\label{Lich expansion}
		\D_L c  = a^{-1/2} \wt{\D} \, a^{-1/2} c + 
		\left(A \cdot (x^{-1}\V_b)^2 + B \cdot x^{-1}\V_b + C \right) c.
	\end{align}
\end{thm}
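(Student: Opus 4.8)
The plan is to compare $\D_L$, the Lichnerowicz Laplacian of the flow metric $g(t) = ag + \w$, with $\wt{\D}$, the rough Laplacian of the initial metric $g(0) = g$, and to track how the coefficient functions of the difference behave near the conical tip. First I would recall that $\D_L c = \D c - 2\Rm \ast c + \ric \ast c$ where $\D$ is the rough Laplacian of $g(t)$, and $\Rm, \ric$ are curvature quantities of $g(t)$; by the regularity statement \eqref{Ricci-regularity}, the curvature terms of $g(t)$ lie in $\mathcal{C}^{k,\a}_{\textup{ie}}(M\times[0,T],\cdot)_{-2+\ov{\g}}$, which accounts for a piece of the zeroth-order operator $C$. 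So the bulk of the work is comparing the rough Laplacian $\D$ of $g(t)$ with $\wt{\D}$ of $g(0)$.

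The key step is a Christoffel symbol comparison: writing $\G = \wt{\G} + \Xi$ where $\Xi^k_{ij} = \tfrac12 g^{kl}(\wt{\n}_i g_{jl} + \wt{\n}_j g_{il} - \wt{\n}_l g_{ij})$ is a genuine tensor, one expresses $\D c = g^{ij}(\wt{\n}_i\wt{\n}_j c - \Xi \ast \wt{\n} c - (\wt{\n}\Xi)\ast c - \Xi\ast\Xi\ast c)$ and compares $g^{ij}\wt{\n}_i\wt{\n}_j$ with $a^{-1/2}\wt{\D}a^{-1/2}$ — the conjugation by $a^{-1/2}$ being chosen precisely so that the leading-order (in $x$) behaviour of the difference $g^{ij}\wt{\n}_i\wt{\n}_j - a^{-1/2}\wt{\D}a^{-1/2}$ is of order $O(\ov{\g})$ rather than $O(0)$ relative to $\wt{\D}$; this is exactly where the trace part $a = 1+u$, with $u \in \mathcal{C}^{k+2,\a}_{\textup{ie}, S_1}$ restricting to $u_0 > 0$ at $x=0$, gets absorbed. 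One then needs the estimates $\Xi \in \mathcal{C}^{k+1,\a}_{\textup{ie}}(\cdot)_{-1+\ov{\g}}$ (one $x^{-1}\V_b$ derivative of $g(t) - $ its conical leading part costs a weight $-1+\ov{\g}$), hence $\wt{\n}\Xi \in \mathcal{C}^{k,\a}_{\textup{ie}}(\cdot)_{-2+\ov{\g}}$ and $\Xi\ast\Xi \in \mathcal{C}^{k,\a}_{\textup{ie}}(\cdot)_{-2+2\ov{\g}} \subset \mathcal{C}^{k,\a}_{\textup{ie}}(\cdot)_{-2+\ov{\g}}$, together with $g^{ij} - \wt{g}^{\,ij} \in \mathcal{C}^{k+2,\a}_{\textup{ie}}(\cdot)_{\ov{\g}}$ after the conformal rescaling is accounted for. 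Sorting the resulting terms by how many $x^{-1}\V_b$-factors they carry yields the three pieces: the two-derivative coefficient $A$ comes from $(g^{ij} - a^{-1}\wt{g}^{\,ij})$ contracted against $\wt{\n}^2$ and lies in $\mathcal{C}^{k,\a}_{\textup{ie}}(\cdot)_{\ov{\g}}$; the one-derivative coefficient $B$ comes from $g^{ij}\Xi$ and lies in $\mathcal{C}^{k,\a}_{\textup{ie}}(\cdot)_{-1+\ov{\g}}$; the zeroth-order coefficient $C$ collects $\wt{\n}\Xi$, $\Xi\ast\Xi$, the curvature reaction terms of $\D_L$, and the commutator terms from moving $a^{-1/2}$ past $\wt{\n}$, all landing in $\mathcal{C}^{k,\a}_{\textup{ie}}(\cdot)_{-2+\ov{\g}}$.

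Concretely I would organize the proof as: (1) reduce to the conical neighborhood $\cC(F)$, since away from the tip all quantities are smooth and the claim is trivial; (2) use Definition~\ref{cone-metric} and the regularity of the flow to write $g(t)\restriction\cC(F) = a(dx^2 + x^2 g_F) + \w$ with controlled error, and establish the weighted Hölder bounds on $g^{ij} - a^{-1}\wt{g}^{\,ij}$ and on $\Xi$; (3) expand $\D c$ in terms of $\wt{\n}$ and $\Xi$; (4) perform the conjugation identity $g^{ij}\wt{\n}_i\wt{\n}_j c = a^{-1/2}\wt{\D}(a^{-1/2}c) + (\text{lower order in the }x^{-1}\V_b\text{ count})$, checking that the $x^{-1}\V_b(a)$ and $(x^{-1}\V_b)^2(a)$ terms produced by the conjugation have the asserted weights because $a - 1 = u \in \mathcal{C}^{k+2,\a}_{\textup{ie},S_1}$ with positive weight away from its constant leading term; (5) add in the $-2\Rm\ast c + \ric\ast c$ terms via \eqref{Ricci-regularity}; (6) collect everything by derivative order into $A, B, C$ and read off the weights, verifying they meet or beat the claimed thresholds using $2\ov{\g} \ge \ov{\g}$ and $-2 + 2\ov{\g} \ge -2 + \ov{\g}$.

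The main obstacle I expect is step (4): getting the conjugation by $a^{-1/2}$ exactly right so that the \emph{residual} zeroth-order operator genuinely has weight $-2+\ov{\g}$ and not $-2$. The point is delicate because $a$ itself is $O(1)$, not small, near the tip — only $a - u_0$, i.e. the non-constant part, carries positive weight — so a naive comparison of $g^{ij}\wt{\n}_i\wt{\n}_j$ with $\wt{\D}$ would leave an $O(x^{-2})$ (i.e. weight $-2$, in the scale of this paper weight $0$ relative to $\wt{\D}$) discrepancy that would ruin the estimate; the conjugation is precisely the device that removes the constant-order discrepancy, and one must check that what remains — the cross terms $\wt{\n}(a^{-1/2})\ast\wt{\n}$ and $\wt{\n}^2(a^{-1/2})$ — involves only derivatives of $a$, hence only derivatives of $u$, which do carry the weight $-1+\ov{\g}$ resp. $-2+\ov{\g}$. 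The secondary technical point is bookkeeping the fibrewise-metric change: $\D_L$ and $\wt{\D}$ act on sections of $S = \Sym^2({}^{ib}T^*M)$ using different induced inner products, and one must make sure the identification used to even state \eqref{Lich expansion} does not itself introduce uncontrolled factors — but this is handled by the same observation that the discrepancy between the two fibre metrics is governed by $u$ and $\w$, hence of positive weight.
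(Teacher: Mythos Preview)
Your overall strategy matches the paper's: both proofs compare Christoffel symbols of $g(t)$ against those of $g(0)$, extract $a^{-1}\wt{\D}$ as the leading second-order piece, and then pass to $a^{-1/2}\wt{\D}\,a^{-1/2}$ via the conjugation identity, absorbing the commutator terms $a^{-1/2}\wt{\n}(a^{-1/2})$ and $a^{-1/2}\wt{\D}(a^{-1/2})$ into $B$ and $C$. Your tensorial packaging via the difference tensor $\Xi = \G - \wt{\G}$ is a cleaner variant of the paper's coordinate expansion and index-counting, but the content is the same.

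There is, however, one genuine gap. You assert that ``by the regularity statement \eqref{Ricci-regularity}, the curvature terms of $g(t)$ lie in $\mathcal{C}^{k,\a}_{\textup{ie}}(\cdot)_{-2+\ov{\g}}$.'' That reference controls only $\ric(g(t))$; the full Riemann tensor $\Rm(g(t))$, which enters $\D_L$ through $-2g^{pq}R^r_{qkl}c_{rp}$, is generically only of weight $-2$ (already the exact cone $\overline{g}$ has nonvanishing Riemann curvature of order $x^{-2}$; H\"older regularity only gives $\Rm \in \mathcal{C}^{k,\a}_{\textup{ie}}(\cdot)_{-2}$). Hence this term cannot be absorbed directly into a $C$ of weight $-2+\ov{\g}$. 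The correct move --- which is what the paper does by expanding $R^l_{ijk}$ in Christoffel symbols and invoking \eqref{Christoffel} --- is to write the $g(t)$-curvature endomorphism as the $g(0)$-curvature endomorphism plus a difference built from $\Xi$ and $\wt{\n}\Xi$; the difference carries weight $-2+\ov{\g}$ and goes into $C$, while the $a^{-1}$-scaled $g(0)$-curvature piece is absorbed into $a^{-1/2}\wt{\D}\,a^{-1/2}$. In other words, the $\wt{\D}$ appearing in the target expansion must be read as the Lichnerowicz Laplacian $\wt{\D}_L$ of $g(0)$, not the rough Laplacian --- consistent with the subsequent identification $L = a^{1/2}\wt{\D}\,a^{-1/2} = \wt{\D}_L + P$ in \eqref{DD}. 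Once you route the Riemann term this way, your steps (1)--(6) go through.
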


\begin{proof}
	Consider the rough Laplacian $\Delta = -g^{ij} \n_i \n_j$ first. We start by writing out $\D$ in local coordinates and later on turn to the Lichnerowicz Laplacian $\Delta_L$. We compute for any symmetric $2$-tensor $c$ 
	\begin{equation} \label{Laplacian} \begin{split}
		&(\D c)_{kl} = -g^{ij} \n_i \n_j c_{kl} \\
		& = -g^{ij} (\p_i \p_j c_{kl} - \p_i \G^m_{jk} c_{ml} - \G^m_{jk} \p_i c_{ml} - \p_i \G^m_{jl} c_{km} - \G^m_{jl} \p_i c_{km} \\
		& - \G^m_{ij} \p_m c_{kl} + \G^m_{ij} \G^p_{mk} c_{pl} + \G^m_{ij} \G^p_{ml} c_{kp} - \G^m_{ik} \p_j c_{ml} + \G^m_{ik} \G^p_{jm} c_{pl}\\
		& + \G^m_{ik} \G^p_{jl} c_{mp} - \G^m_{il} \p_j c_{km} + \G^m_{jk} \G^p_{il} c_{mp} + \G^m_{il} \G^p_{jm} c_{kp}). 
	\end{split}\end{equation}
	Our goal is to obtain an expansion for each term on the right-hand side of \eqref{Laplacian}. The metric $g(t)$ has the form $g_{ij} = a\wt{g}_{ij} + \w_{ij}$,
	with inverse (cf. \cite[p. 28]{Ver-Ricci}) 
	\begin{equation} \label{inverse metric}
		g^{ij} = a^{-1} \wt{g}^{ij} - a^{-2} \wt{g}^{il} \wt{g}^{jp} \w_{pl} + a^{-2} g^{jl} \wt{g}^{ir}\wt{g}^{pq} \w_{lp} \w_{rq}. 
	\end{equation} 
	Hence the first term on the right-hand side of \eqref{Laplacian} is given by
	\begin{align*} 
		-g^{ij} \p_i \p_j c_{kl}  = -a^{-1} \wt{g}^{ij} \p_i \p_j c_{kl}
		-(-a^{-2}\wt{g}^{is}\wt{g}^{jp} \w_{ps} + a^{-2} g^{js} \wt{g}^{ir} \wt{g}^{pq} \w_{sp} \w_{rq}) \p_i \p_j c_{kl}.
	\end{align*}
	Now we study the asymptotics of the last expression at the conical singularity. 
	This is analogous to the discussion in \cite[p. 28-29]{Ver-Ricci}. Consider the 
	local coordinates $(z_0,\cdots, z_n)$ of $\cC(F)$ with $z_0=x$ and 
	$(z_1, \cdots, z_n)$ being local coordinates of $F$. An upper index $i=0$ 
	does not contribute any singular factor of $x$ due to the structure of the inverse 
	$g^{-1}$. A lower index $i=0$ indicates a differentiation by $\partial_x \in x^{-1}\V_b$ Hence an index $i=0$ (as a combination of a lower and upper index) contributes 
	$x^{-1}\V_b$ up to a term of type $A$. \medskip
	
	Similarly, an upper index $i>0$ 
	contributes a singular factor $x^{-1}$ due to the structure of the inverse 
	$g^{-1}$. A lower index $i>0$ indicates a differentiation by $\partial_{z_j} \in \V_b$ Hence an index $i>0$ (as a combination of a lower and upper index) contributes
	$x^{-1}\V_b$ up to a term of type $A$. Hence in total we find
	\begin{align} \label{L term 1}
		-g^{ij} \p_i \p_j c_{kl} & = -a^{-1} \wt{g}^{ij} \p_i \p_j c_{kl} 
		+ A \cdot (x^{-1}\V_b)^2 c_{kl},
	\end{align}
	for some $A \in \mathcal{C}^{k,\a}_{\textup{ie}}(M \times [0,T],S)_{\ov{\g}}$.
	In order to study the remaining terms in \eqref{Laplacian}, involving Christoffel
	symbols, we note that the Christoffel symbols $\G_{ij}^k$ with respect to the metric $g(t)$ are related to the Christoffel symbols $\wt{\G}_{ij}^k$ of the initial metric $\wt{g}$ as follows
	\begin{equation} \label{Christoffel} \begin{split}
	\G_{ij}^k & = \wt{\G}_{ij}^k + \frac{1}{2}a^{-1}\wt{g}^{km} (\p_i a \, \wt{g}_{jm} 
	+ \p_j a \, \wt{g}_{im} - \p_m a \, \wt{g}_{ij} + \p_i \w_{jm} \\
	& + \p_j \w_{im} - \p_m \w_{ij}) +  \frac{1}{2} (- a^{-2} \wt{g}^{kl} \wt{g}^{mp} \w_{pl} + a^{-2} g^{ml} \wt{g}^{kr}\wt{g}^{pq} \w_{lp} \w_{rq})  \\
	& \times (a (\p_i \wt{g}_{jm} + \p_j \wt{g}_{im} - \p_m \wt{g}_{ij}) + \p_i a \, \wt{g}_{jm} + \p_j a \, \wt{g}_{im} - \p_m a \, \wt{g}_{ij} \\
	& + \p_i \w_{jm} + \p_j \w_{im} - \p_m \w_{ij}) 
	\end{split}\end{equation}
	This allows us to conclude by counting upper and lower indices as above that for any 
	$\{Y_\ell\}_{\ell}$ with $Y_0 := \partial_x$ and $Y_i := x^{-1}\partial_{z_i}$ for $i=1,\cdots, n$, we have
	\begin{align}
		(\D c) (Y_k,Y_l)  = a^{-1} (\wt{\D} c) (Y_k,Y_l) 
		+ \left(A \cdot (x^{-1}\V_b)^2 + B \cdot x^{-1}\V_b + C \right) c(Y_k,Y_l),
	\end{align}
	for some terms $A,B,C$ as in the statement of the theorem. Computing further
	\begin{equation}\begin{split}
		&\D  = a^{-1/2} \wt{\D} \, a^{-1/2} +
		+ \left(A \cdot (x^{-1}\V_b)^2 + B' \cdot x^{-1}\V_b 
		+ C' \right), \\
		&\textup{where} \ B'=B-a^{-1/2} (\wt{\nabla} a^{-1/2}), 
		\quad C'=C- a^{-1/2} (\wt{\D} a^{-1/2}).
	\end{split}\end{equation}
	Note that the terms $B',C'$ still have the same H\"older regularity as $B,C$.
	Now we turn to the Lichnerowicz Laplacian. Recall that it is given by 
	\[ \D_L c_{kl} = \D c_{kl} - 2 g^{pq} R^r_{qkl} c_{rp} + g^{pq} R_{kp} c_{ql} + g^{pq}R_{lp} c_{kq}.\]
	To deal with the extra terms involving the Riemann curvature tensor and the Ricci curvature on the right-hand side, we note the following formula for the components of the curvature tensor in local coordinates
	\begin{equation}
		R^l_{ijk} = \p_i \G_{jk}^l - \p_j \G_{ik}^l + \G^p_{jk} \G^l_{ip} - \G^p_{ik} \G^l_{jp},
	\end{equation}  
	which also leads to a formula for the components of the Ricci tensor
	\begin{equation}
		R_{jk} = R^i_{ijk} = \p_i \G_{jk}^i - \p_j \G_{ik}^i + \G^p_{jk} \G^i_{ip} - \G^p_{ik} \G^i_{jp}.
	\end{equation}
	These extra terms can be treated using \eqref{Christoffel}, which leads to the claimed expansion for the Lichnerowicz Laplacian.
\end{proof}

\begin{cor}\label{XR}
Under the conditions of Theorem \ref{L expansion} we conclude
	\begin{equation}
		(\p_t +a^{-1/2} \wt{\D} \, a^{-1/2})\ric =: X(\ric) \in 
		\mathcal{C}^{k-1,\a}_{\textup{ie}}(M \times [0,T],S)_{-4+2\ov{\g}}.
	\end{equation}
\end{cor}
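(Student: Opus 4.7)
The plan is to combine the curvature evolution equation \eqref{evol equ Ricci} with the Lichnerowicz expansion \eqref{Lich expansion} from Theorem \ref{L expansion}. Writing
\[
X(\ric) = (\p_t + \D_L)\ric - \bigl(\D_L - a^{-1/2}\wt{\D}\,a^{-1/2}\bigr)\ric,
\]
I would substitute \eqref{evol equ Ricci} into the first summand and \eqref{Lich expansion} into the second to obtain the explicit six-term formula
\[
X(\ric) = \n_m R_{jk}\,W^m + R_{jm}\n_k W^m + R_{km}\n_j W^m - A\,(x^{-1}\V_b)^2\ric - B\,(x^{-1}\V_b)\ric - C\,\ric.
\]
The strategy is then to bound each of the six summands in the target space $\mathcal{C}^{k-1,\a}_{\textup{ie}}(M\times[0,T],S)_{-4+2\ov{\g}}$ by a product-rule argument in the weighted H\"older calculus, using the three available inputs: $\ric(g(t))\in\mathcal{C}^{k,\a}_{\textup{ie}}(\ldots)_{-2+\ov{\g}}$ from \eqref{Ricci-regularity}, $W(t)\in\mathcal{C}^{k+1,\a}_{\textup{ie}}(M\times[0,T],{}^{ib}T^*M)_{-1+\ov{\g}}$ from \eqref{W-regularity}, and the regularity statements for $A$, $B$, $C$ in Theorem \ref{L expansion}.

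For the first three summands the bookkeeping is routine. A single covariant derivative lowers the regularity of $\ric$ by one and shifts the weight by $-1$, giving $\n\ric\in\mathcal{C}^{k-1,\a}_{\textup{ie}}(\ldots)_{-3+\ov{\g}}$; multiplication by $W$ then sums the weights to $-4+2\ov{\g}$ while preserving the regularity $k-1$. The two curvature-feedback pieces $R_{jm}\n_k W^m$ and $R_{km}\n_j W^m$ are even better behaved because no derivative falls on $\ric$, and $\n W$ shares the correct regularity and weight profile.

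For the three error terms, $C\cdot\ric$ is immediate once the weights $-2+\ov{\g}$ and $-2+\ov{\g}$ are added. The term $B\,(x^{-1}\V_b)\ric$ follows from the weighted multiplication rule together with the fact that one application of $x^{-1}\V_b$ lowers regularity by one and shifts the weight by one unit. The genuinely delicate piece is $A\,(x^{-1}\V_b)^2\ric$, since two applications of $x^{-1}\V_b$ on a section of $\mathcal{C}^{k,\a}$-regularity naively drops the regularity to $k-2$. To recover the stated $\mathcal{C}^{k-1,\a}$ bound I would exploit the explicit construction of $A$ from the proof of Theorem \ref{L expansion}: it is polynomial in the flow quantities $a$, $\w$ and their $\wt{\n}$-derivatives, all of which inherit the full regularity $\mathcal{C}^{k+2,\a}_{\textup{ie}}$ from $g(\cdot)\in\cH^{k+2,\A}_{\gamma_0,\gamma_1}$, together with the positive weight $x^{\ov{\g}}$ of $A$, which absorbs the singular factor introduced by $(x^{-1}\V_b)^2$, so that the product ends up at weight $-4+2\ov{\g}$ as required.

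The principal obstacle throughout is precisely this last weighted multiplication estimate, and the careful bookkeeping of H\"older indices -- analogous to the product lemmas in \cite[Section 2]{Ver-Ricci} -- needed to reach the target regularity without a spurious loss to $\mathcal{C}^{k-2,\a}$. Once each of the six summands is controlled in $\mathcal{C}^{k-1,\a}_{\textup{ie}}(M\times[0,T],S)_{-4+2\ov{\g}}$, summation yields the claim.
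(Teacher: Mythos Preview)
Your decomposition
\[
X(\ric) = (\p_t + \D_L)\ric - \bigl(\D_L - a^{-1/2}\wt{\D}\,a^{-1/2}\bigr)\ric
\]
and the subsequent use of \eqref{evol equ Ricci}, \eqref{W-regularity}, \eqref{Ricci-regularity} and Theorem~\ref{L expansion} is exactly the route the paper takes; the paper's proof simply asserts that the first piece lies in $\mathcal{C}^{k-1,\a}_{\textup{ie}}(M\times[0,T],S)_{-4+2\ov{\g}}$ by \eqref{W-regularity} and \eqref{Ricci-regularity}, and that the second piece follows from Theorem~\ref{L expansion} and \eqref{Ricci-regularity}, without writing out the six terms.

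Your treatment of the first five terms is fine. The genuine gap is in your handling of $A\cdot(x^{-1}\V_b)^2\ric$. You correctly notice that two applications of $x^{-1}\V_b$ to $\ric\in\mathcal{C}^{k,\a}_{\textup{ie}}(\ldots)_{-2+\ov{\g}}$ drop the regularity to $\mathcal{C}^{k-2,\a}$, but your proposed remedy --- exploiting that $A$ actually enjoys $\mathcal{C}^{k+2,\a}$-regularity inherited from $g(\cdot)\in\cH^{k+2,\A}_{\gamma_0,\gamma_1}$ --- does not recover the lost derivative. In the weighted H\"older calculus the regularity of a product is governed by the \emph{least} regular factor: multiplying $(x^{-1}\V_b)^2\ric\in\mathcal{C}^{k-2,\a}_{\textup{ie}}(\ldots)_{-4+\ov{\g}}$ by a coefficient in $\mathcal{C}^{k+2,\a}_{\textup{ie}}(\ldots)_{\ov{\g}}$ still lands you in $\mathcal{C}^{k-2,\a}_{\textup{ie}}(\ldots)_{-4+2\ov{\g}}$, not $\mathcal{C}^{k-1,\a}$. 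The positive weight of $A$ fixes the weight index but cannot repair the H\"older order. The paper's proof does not address this point either --- it simply cites Theorem~\ref{L expansion} and \eqref{Ricci-regularity} --- so the discrepancy you spotted is real; fortunately the downstream application in Theorem~\ref{main2-explizit} only uses the weight $-4+2\ov{\g}$ for the iteration and would go through unchanged with $\mathcal{C}^{k-2,\a}$ in place of $\mathcal{C}^{k-1,\a}$.
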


\begin{proof}Recall the evolution \eqref{evol equ Ricci} 
	of Ricci curvature along the Ricci de Turck flow
	\begin{equation}
		(\p_t + \D_L) \ric_{jk} = \n_m \ric_{jk} W^m + 
		\ric_{jm} \n_k W^m + \ric_{km} \n_j W^m,
	\end{equation}
	By \eqref{W-regularity} and \eqref{Ricci-regularity} we conclude
	$$
	(\p_t + \D_L) \ric \in 
	\mathcal{C}^{k-1,\a}_{\textup{ie}}(M \times [0,T],S)_{-4+2\ov{\g}}.
	$$
	The statement now follows from Theorem \ref{L expansion} and \eqref{Ricci-regularity}.
\end{proof}

\subsection{Mapping properties of the heat parametrix for $L:=a^{1/2} \wt{\D} \, a^{-1/2}$} 
We first point out the following relation between $L=a^{1/2} \wt{\D} \, a^{-1/2}$
and the Lichnerowicz Laplace operator $\wt{\D_L}$
\begin{equation}\label{DD}\begin{split}
L=a^{1/2} \wt{\D} \, a^{-1/2} &= \wt{\D} + a^{1/2} (\wt{\D} \, a^{-1/2})  
+ a^{1/2} (\wt{\nabla} \, a^{-1/2}) \wt{\nabla} \\ 
&= \wt{\D}_L + \left(B \cdot \wt{\nabla} + C \right) =: \wt{\D}_L + P,
\end{split}\end{equation}
where $B$ and $C$ are terms with same regularity as in Theorem \ref{L expansion}.
The heat operator for $\wt{\D_L}$ has been studied in \cite[Theorem 3.1]{Ver-Ricci}, which 
asserts that for $n\geq 3$, any $\g \in (0,2+\min \, \{\mu_0,\mu_1\})$ and $\alpha \in (0,1)$ sufficiently small, the following is a bounded mapping (we simplify notation by writing 
$\cH^{k, \A}_{\gamma} \equiv \cH^{k, \A}_{\gamma, \g}$)
\begin{align}\label{original-mapping} 
e^{-t\wt{\D_L}}:  \cH^{k, \A}_{-4+\gamma} (M\times [0,T], S)
\to \cH^{k+2, \A}_{-2+\gamma} (M\times [0,T], S).
\end{align}
Since by \eqref{DD}, $L$ and $\wt{\D_L}$
differ only by lower order terms $P$, the heat operator construction in 
\cite{Ver-Ricci} carries over to get a heat parametrix for $L$
as well. Thus, exactly as in \eqref{original-mapping}, we find
\begin{align}\label{original-mapping2} 
H := e^{-t L}:  \cH^{k, \A}_{-4+\gamma} (M\times [0,T], S)
\to \cH^{k+2, \A}_{-2+\gamma} (M\times [0,T], S).
\end{align}
The following mapping properties are needed later in this section,
and follow by similar arguments as in \cite[Theorem 3.1]{Ver-Ricci} and 
\eqref{original-mapping}. 

\begin{lem}\label{Lemma 4.2}
	Consider smooth functions $\rho \in C^\infty(\overline{M})$ smooth up to the boundary, 
	$\eta \in C^\infty_0(M)$ smooth with compact support away from the conical singularity.
    Let $\nu \in C^\infty_0(M,TM)$ be a smooth $1$-form and 
    $\w \in C^\infty_0(M,{}^{ib}T^* M \otimes {}^{ib}T^* M)$ a smooth $(0,2)$-tensor, both with 
    compact support away from the conical singularity. Let $H$ be the heat parametrix of 
    $L=a^{1/2} \wt{\D} \, a^{-1/2}$. Then 
	\begin{align*} 
	& E_0 := \eta \, H \, \rho: 
	 \cH^{k, \A}_{-4+\gamma} (M \times [0,T],S)
	\to  \cH^{k+2, \A}_{-2+\gamma} (M \times [0,T],S), \\
	& \wt{E}_0 := \nu \otimes H \, \rho: 
     \cH^{k, \A}_{-4+\gamma} (M \times [0,T],S)
	\to  \cH^{k+2, \A}_{-2+\gamma} (M \times [0,T],
	{}^{ib}T^* M \otimes S),\\
	& \wt{\wt{E}}_0 := \w \otimes H \, \rho : 
	 \cH^{k, \A}_{-4+\gamma} (M \times [0,T],S)
	\to  \cH^{k+2, \A}_{-2+\gamma} (M \times [0,T],
	{}^{ib}T^* M^{\otimes^2} \otimes S), 
	\end{align*}
	with $\|E_0\| \to 0$, $\|\wt{E}_0\| \to 0$ and $\|\wt{\wt{E}}_0\| \to 0$ as $T \to 0$. 
\end{lem}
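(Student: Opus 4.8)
The plan is to deduce Lemma~\ref{Lemma 4.2} directly from the already established mapping property \eqref{original-mapping2} of $H=e^{-tL}$ by exploiting the compact support of the cut-off functions $\rho, \eta, \nu, \w$ away from (or, in the case of $\rho$, merely smoothness up to) the conical singularity. The key point is that multiplication by such functions, and tensoring with them, is a bounded operation on the weighted H\"older spaces in question, and moreover that the operators $E_0, \wt E_0, \wt{\wt E}_0$ are obtained from $H$ by composing with these bounded multiplication/tensoring maps together with an additional gain coming from the time interval shrinking.

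First I would record the elementary mapping facts: for $\rho \in C^\infty(\overline M)$ smooth up to the boundary, multiplication $u \mapsto \rho u$ is bounded on $\cH^{k,\A}_{\gamma}(M\times[0,T],S)$ for every weight $\gamma$, since $\rho$ and all its b-derivatives $\V_b^j \rho$ are bounded; similarly, for $\eta \in C^\infty_0(M)$ supported away from the tip, multiplication by $\eta$ is bounded and, crucially, the output is supported in a region where $x$ is bounded below, so on the support of $\eta$ the weighted norm $\|\cdot\|_{\A,\gamma}$ is equivalent (up to a constant depending on $\supp\eta$) to the unweighted one; the same holds for tensoring with the smooth compactly supported $\nu$ or $\w$, which raises the target bundle to ${}^{ib}T^*M\otimes S$ or $({}^{ib}T^*M)^{\otimes 2}\otimes S$ but is otherwise harmless. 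Composing: $E_0 = \eta\cdot H\cdot \rho$ factors as $\cH^{k,\A}_{-4+\gamma} \xrightarrow{\rho} \cH^{k,\A}_{-4+\gamma} \xrightarrow{H} \cH^{k+2,\A}_{-2+\gamma} \xrightarrow{\eta} \cH^{k+2,\A}_{-2+\gamma}$, and likewise for $\wt E_0, \wt{\wt E}_0$ with the final arrow replaced by tensoring with $\nu$ resp. $\w$. This already gives boundedness of all three operators.

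The remaining assertion is the norm decay $\|E_0\|, \|\wt E_0\|, \|\wt{\wt E}_0\| \to 0$ as $T\to 0$. Here I would invoke the same mechanism that produces the analogous smallness statements in \cite[Theorem~3.1]{Ver-Ricci}: the heat parametrix $H$ gains a positive power of $t$ (equivalently, of $T$) when one measures the output in a H\"older norm that is two orders less regular than what $H$ actually provides, or when one accepts a slightly worse weight — but here we have room to spare because the source weight $-4+\gamma$ is two units below the target weight $-2+\gamma$ exactly as in \eqref{original-mapping2}, and the short-time estimates for the heat kernel of $L$ (built, as noted after \eqref{DD}, by the same parametrix construction as for $\wt\D_L$) carry an overall factor that is $O(T^{\epsilon})$ for some $\epsilon>0$ once a cut-off localizes away from $t=0$ in the relative sense, or simply because the Duhamel/convolution defining $e^{-tL}$ over $[0,T]$ contributes a factor $\int_0^T \cdots\, ds$ vanishing as $T\to 0$. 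Since $\rho$, $\eta$, $\nu$, $\w$ contribute only fixed constants independent of $T$, the decay of $\|H\|$ on the shrinking interval passes to $E_0, \wt E_0, \wt{\wt E}_0$.

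The main obstacle is making the short-time decay precise: strictly speaking \eqref{original-mapping2} as stated only asserts \emph{boundedness} of $e^{-tL}$ on $M\times[0,T]$, not that the operator norm tends to $0$ with $T$. To close this gap I would go back into the parametrix construction of \cite[Section~3]{Ver-Ricci} and extract the explicit $T$-dependence of the constants — typically each term in the parametrix expansion for $\int_0^T e^{-(t-s)L}(\,\cdot\,)(s)\,ds$ is estimated by a heat-kernel bound times $\int_0^t (t-s)^{-1+\delta} s^{0}\,ds \lesssim T^{\delta}$, which is exactly the mechanism giving $\|E_0\|\to 0$. The presence of the smooth cut-offs in fact makes this easier, not harder, because near the tip the operators vanish identically (for $\eta$) or behave like a regular interior heat operator, so no delicate conical-asymptotics bookkeeping is needed; the only care required is to check that tensoring with $\nu$ and $\w$, which changes the bundle, does not interfere with the weight conventions in Definition~\ref{funny-spaces0}, which it does not since $\nu,\w$ are smooth and compactly supported away from $x=0$.
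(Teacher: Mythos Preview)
Your approach is correct and aligns with the paper's own treatment. The paper does not give a standalone proof of this lemma; it simply asserts (in the sentence preceding the statement) that the mapping properties ``follow by similar arguments as in \cite[Theorem~3.1]{Ver-Ricci} and \eqref{original-mapping}.'' Your proposal unpacks exactly what this means: boundedness comes from composing the established mapping \eqref{original-mapping2} for $H$ with the bounded multiplication/tensoring by $\rho,\eta,\nu,\w$, and the norm decay as $T\to 0$ requires going back into the heat-kernel parametrix construction of \cite{Ver-Ricci} to extract the explicit $T^\delta$ factor from the Duhamel time integral. You also correctly flag the one genuine gap---that \eqref{original-mapping2} alone gives only boundedness, not $\|H\|\to 0$---and propose the right fix. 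If anything, your write-up is more explicit than the paper's, which leaves the entire argument to the cited reference.
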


The next result is a straightforward consequence of Lemma \ref{Lemma 4.2}.

\begin{cor}\label{cor to Lemma 4.2}
In the notation of Lemma \ref{Lemma 4.2} we find
	\begin{align*} 
	& F_0 := \nu \otimes \n \, H \, \rho : 
	 \cH^{k, \A}_{-4+\gamma} (M \times [0,T],S)
	\to  \cH^{k+1, \A}_{-3+\gamma} (M \times [0,T],
	{}^{ib}T^* M^{\otimes^2} \otimes S), \\
	& G_0 := g^{ij} \nu_i \n_j H \, \rho :  \cH^{k, \A}_{-4+\gamma} (M \times [0,T],S)
		 \to  \cH^{k+1, \A}_{-4+\gamma} (M \times [0,T], S),
	\end{align*}
	with $\|F_0\| \to 0$, $\|G_0\| \to 0$ as $T \to 0$.
\end{cor}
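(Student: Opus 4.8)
The plan is to deduce Corollary \ref{cor to Lemma 4.2} directly from Lemma \ref{Lemma 4.2} by factoring each of the operators $F_0$ and $G_0$ through one of the already-established bounded maps $E_0$, $\wt{E}_0$ or $\wt{\wt{E}}_0$, together with a loss of one weight unit and one order of regularity coming from a single covariant derivative $\n$. The key point is that $\n$, written in the local coordinates $(x,z)$ over $\cC(F)$, is a combination of $x^{-1}\V_b$ (i.e. $\partial_x$ and $x^{-1}\partial_{z_i}$) with coefficients involving the Christoffel symbols $\G^k_{ij}$ of $g(t)$, which by \eqref{Christoffel} are of the schematic form $x^{-1}$ times $\V_b$-regular functions (of class $\mathcal{C}^{k,\A}_{\textup{ie}}$) plus lower order pieces. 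Hence $\n$ maps $\cH^{k+2,\A}_{-2+\gamma}$ boundedly into $\cH^{k+1,\A}_{-3+\gamma}$, shifting the weight from $-2+\gamma$ to $-3+\gamma$ and lowering the Hölder order from $k+2$ to $k+1$.

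First I would treat $F_0 = \nu \otimes \n\, H\, \rho$. One writes $F_0 = (\identity \otimes \n)\circ \wt{\wt E}_0'$, where $\wt{\wt E}_0'$ is of the same type as $\wt{E}_0$ (a $1$-form tensored with $H\rho$, landing in $\cH^{k+2,\A}_{-2+\gamma}(M\times[0,T], {}^{ib}T^*M\otimes S)$), and then applies $\n$ in the last factor. Since $\nu$ is smooth with compact support away from the cone, it contributes no singular behaviour, and the composition with $\n$ costs exactly one weight unit and one regularity order by the discussion above, yielding the claimed target space $\cH^{k+1,\A}_{-3+\gamma}(M\times[0,T], {}^{ib}T^*M^{\otimes^2}\otimes S)$. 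The operator norm bound $\|F_0\|\to 0$ as $T\to 0$ is then inherited from $\|\wt{E}_0\|\to 0$ (or $\|\wt{\wt E}_0\|\to 0$), because $\n$ and the compactly supported tensor $\nu$ contribute $T$-independent bounded factors.

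For $G_0 = g^{ij}\nu_i \n_j H\,\rho$ the argument is the same, except that after applying $\n$ one additionally contracts with $g^{ij}\nu_i$. The contraction with the smooth compactly supported $1$-form $\nu_i$ is harmless; the contraction with the inverse metric $g^{ij}$, which by \eqref{inverse metric} is of class $\mathcal{C}^{k+2,\A}_{\textup{ie}}$ and carries at most a factor $x^{-1}$ per ``spatial'' index, combines with $\n$ to give a total operator $g^{ij}\nu_i\n_j$ that maps $\cH^{k+2,\A}_{-2+\gamma}$ into $\cH^{k+1,\A}_{-4+\gamma}$ --- the extra weight loss (from $-3+\gamma$ to $-4+\gamma$) accounting for the two singular indices being contracted. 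Since $\nu$ has compact support away from the singularity one could even argue more crudely away from the cone and use the established mapping near the cone; in either case $\|G_0\|\to 0$ as $T\to 0$ follows from Lemma \ref{Lemma 4.2}.

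The only genuinely delicate point --- and the step I would write out with care --- is the bookkeeping of weights and Hölder orders under a single covariant derivative near the conical tip, i.e. verifying that $\n$ indeed maps $\cH^{k+2,\A}_{-2+\gamma}\to\cH^{k+1,\A}_{-3+\gamma}$. This is entirely parallel to the index-counting in the proof of Theorem \ref{L expansion} (where $\partial_x$ and $x^{-1}\partial_{z_i}$ are grouped as $x^{-1}\V_b$ and the Christoffel symbols are expanded via \eqref{Christoffel}), so no new analytic input is required; everything else is a routine composition of the bounded maps supplied by Lemma \ref{Lemma 4.2} with fixed, $T$-independent, smooth compactly supported multipliers.
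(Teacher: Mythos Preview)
Your overall strategy---factor through the maps of Lemma \ref{Lemma 4.2} and then absorb a single covariant derivative at the cost of one weight and one regularity order---is exactly what the paper does. However, your factorization $F_0 = (\identity \otimes \nabla)\circ \wt{E}_0'$ is not quite right as written: ``$\identity \otimes \nabla$'' applied only to the $S$-factor is not a well-defined differential operator on sections of ${}^{ib}T^*M\otimes S$, and if instead you factor as $F_0 = (\nu\otimes{-})\circ\nabla\circ H\rho$, you lose the compactly supported multiplier in front of $H\rho$, so Lemma \ref{Lemma 4.2} no longer gives you $\|\cdot\|\to 0$ as $T\to 0$ (only boundedness from \eqref{original-mapping2}).

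The paper closes this gap with a Leibniz rule: apply the full covariant derivative to $\wt{E}_0 = \nu\otimes H\rho$, obtaining
\[
\nabla(\nu\otimes H\rho) = \nabla\nu \otimes H\rho + \nu\otimes\nabla H\rho,
\]
so that $F_0 = \nabla\circ\wt{E}_0 - (\nabla\nu)\otimes H\rho$. The first term is $\nabla$ (a $T$-independent bounded map $\cH^{k+2,\A}_{-2+\gamma}\to\cH^{k+1,\A}_{-3+\gamma}$, by the index-counting you describe) composed with $\wt{E}_0$; the second term is exactly an operator of type $\wt{\wt{E}}_0$ with $\omega = \nabla\nu \in C^\infty_0(M,{}^{ib}T^*M^{\otimes 2})$. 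Both pieces therefore have norm $\to 0$ by Lemma \ref{Lemma 4.2}; this is precisely why $\wt{\wt{E}}_0$ appears in that lemma.

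For $G_0$ the paper is also simpler than your index-counting: once $F_0$ is in hand, $G_0$ is just $F_0$ followed by the metric contraction on the first two ${}^{ib}T^*M$-slots, which is a $T$-independent bounded operation into the stated (weaker) target space. No separate weight analysis near the tip is needed.
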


\begin{proof}
	By Lemma \ref{Lemma 4.2} we have 
	\[ \n (\nu \otimes H \rho ): 	
	 \cH^{k, \A}_{-4+\gamma} (M \times [0,T],S)
	\to  \cH^{k+1, \A}_{-3+\gamma} (M \times [0,T],
	{}^{ib}T^* M^{\otimes^2} \otimes S), \]
	with $\|\n (\nu \otimes H\rho )\| \to 0$ as $T \to 0$. 
	On the other hand
	\[ \n (\nu \otimes H \rho ) = \n \nu \otimes H\rho  + \nu \otimes \n H\rho. \]
	Hence the mapping properties of $F_0$ follow by applying 
	Lemma \ref{Lemma 4.2} to $\n \nu \otimes H\rho$. 
	The mapping properties of $G_0$ follow from the ones of $F_0$, 
	as $G_0$ is simply $F_0$ composed with a contraction.
\end{proof}

\subsection{Heat operator parametrix for $a^{-1/2} \wt{\D} \, a^{-1/2}$} 

Our goal in this section is the existence of inverses $\mathcal{Q}$ and $\mathcal{R}$ for the parabolic operator 
$P := \p_t + a^{-1}L$, where $a$ is the pure trace part of
$g(t)$ with respect to the initial metric $g$ and is positive uniformly bounded away from zero for short time. These parametrices are constructed out of the heat parametrix $H$ for 
$L=a^{1/2} \wt{\D} \, a^{-1/2}$, using \eqref{original-mapping2}, 
Lemma \ref{Lemma 4.2} and Corollary \ref{cor to Lemma 4.2}.
Our main result in this subsection is as follows.
\begin{thm}\label{Q neu}
Let $n\geq 3$. Consider any $\g \in (0,2+\min \, \{\mu_0,\mu_1\})$ and $\alpha \in (0,1)$ sufficiently small, such that \eqref{original-mapping} holds. Then there exists $T_0 > 0$ 
sufficiently small and a bounded linear map
\[ \mathcal{Q}:  \cH^{k, \A}_{-4+\gamma} (M \times [0,T_0],S)
\to  \cH^{k+2, \A}_{-2+\gamma} (M \times [0,T_0],S), \]
such that if $f \in  \cH^{k, \A}_{-4+\gamma} (M \times [0,T_0],S)$, 
then $u = \mathcal{Q}f$ solves the initial value problem
\[ (\p_t + a^{-1}L )u = f, \qquad u(\cdot,0) = 0. \]
\end{thm}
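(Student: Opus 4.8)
The plan is to construct $\mathcal{Q}$ by a Volterra-type perturbation argument, treating $a^{-1}L$ as a perturbation of $L$ (for which the heat parametrix $H$ already has good mapping properties by \eqref{original-mapping2}). First I would write $a^{-1}L = L + (a^{-1}-1)L$ and note that $(a^{-1}-1) \in \cH^{k,\A}_{\textup{ie}}(M\times[0,T],S_1)^b_{\gamma_1}$ vanishes at $x=0$ to order $\gamma_1 > 0$, and more importantly, that $\|a^{-1}-1\|_\infty \to 0$ as $T \to 0$ since $a(\cdot,0) = 1$ (recall $g(t) = ag+\w$ with $g(0)=g$, so $u := a-1$ lies in a weighted H\"older space with $u(\cdot,0)=0$). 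Applying $H$ to the equation $(\p_t + a^{-1}L)u = f$ with $u(\cdot,0)=0$, and using that $H$ is a right inverse for $\p_t + L$ on the zero initial data problem, one is led to the equation $u = Hf - H\big((a^{-1}-1)Lu\big)$, i.e. $(\mathrm{id} + K)u = Hf$, where $K := H \circ (a^{-1}-1) \circ L$.

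The key step is to verify that $K$ is a bounded operator on $\cH^{k+2,\A}_{-2+\gamma}(M\times[0,T_0],S)$ with operator norm $\to 0$ as $T_0 \to 0$. Here I would decompose $L = a^{1/2}\wt\D\, a^{-1/2} = \wt\D_L + P$ as in \eqref{DD}, so that $L$ maps $\cH^{k+2,\A}_{-2+\gamma} \to \cH^{k,\A}_{-4+\gamma}$ boundedly; multiplication by $(a^{-1}-1)$ preserves $\cH^{k,\A}_{-4+\gamma}$ with norm $\to 0$ as $T_0 \to 0$ (using that $a^{-1}-1$ vanishes at $t=0$); and then $H: \cH^{k,\A}_{-4+\gamma}\to\cH^{k+2,\A}_{-2+\gamma}$ is bounded by \eqref{original-mapping2}. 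Composing, $\|K\| \le \|H\|\cdot\|a^{-1}-1\|_{\cH^{k,\A}}\cdot\|L\|_{\cH^{k+2,\A}_{-2+\gamma}\to\cH^{k,\A}_{-4+\gamma}} \to 0$ as $T_0\to 0$. Hence for $T_0$ small enough, $\mathrm{id}+K$ is invertible by a Neumann series on $\cH^{k+2,\A}_{-2+\gamma}$, and I would set $\mathcal{Q} := (\mathrm{id}+K)^{-1}\circ H$, which is bounded $\cH^{k,\A}_{-4+\gamma}\to\cH^{k+2,\A}_{-2+\gamma}$.

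It remains to check that $u = \mathcal{Q}f$ actually solves the stated initial value problem. By construction $u$ satisfies $u = Hf - K u = Hf - H((a^{-1}-1)Lu)$, so $u = H\big(f - (a^{-1}-1)Lu\big)$; applying $\p_t + L$ and using that $H$ inverts it with vanishing initial data gives $(\p_t + L)u = f - (a^{-1}-1)Lu$, i.e. $(\p_t + a^{-1}L)u = f$, and $u(\cdot,0)=0$ since $H$ produces zero initial data. The main obstacle I anticipate is not the Neumann-series bookkeeping but ensuring that the smallness $\|a^{-1}-1\| \to 0$ is measured in exactly the H\"older norm needed for multiplication to be bounded on the weighted space $\cH^{k,\A}_{-4+\gamma}$ — this requires knowing that $a-1$ and its $\V_b$-derivatives up to order $k$ are small, which follows from $g(\cdot)\in\cH^{k+2,\A}_{\gamma_0,\gamma_1}$ together with continuity of the flow in $t$ at $t=0$; one must also be slightly careful that multiplication by $a^{-1}$, rather than $a^{-1}-1$, does not spoil the weighted structure, but since $a^{-1}$ is bounded with bounded $\V_b$-derivatives this is routine.
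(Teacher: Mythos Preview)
Your Volterra--Neumann approach is appealing but has a real gap at the step where you claim that multiplication by $(a^{-1}-1)$ has operator norm tending to $0$ on $\cH^{k,\A}_{-4+\gamma}$ as $T_0\to 0$. Vanishing of $a^{-1}-1$ at $t=0$ gives $\|a^{-1}-1\|_\infty \le C\,T_0^{\alpha/2}$, but it does \emph{not} force the H\"older seminorm $[\,a^{-1}-1\,]_\alpha$ to be small: taking $t'=0$ and $p=p'$ one sees $[\,a^{-1}-1\,]_\alpha \ge \sup_{p,t} |a^{-1}(p,t)-1|/t^{\alpha/2}$, which is bounded away from zero in general. Since the product estimate in H\"older spaces reads $[(a^{-1}-1)v]_\alpha \le \|a^{-1}-1\|_\infty\,[v]_\alpha + [a^{-1}-1]_\alpha\,\|v\|_\infty$, the second summand carries no smallness in $T_0$. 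Neither of the other two factors in your bound $\|K\|\le \|H\|\cdot\|a^{-1}-1\|\cdot\|L\|$ helps: the Schauder estimate $\|H\|_{\cH^{k,\A}_{-4+\gamma}\to\cH^{k+2,\A}_{-2+\gamma}}$ is uniform in $T_0$ but does not decay (heat parametrices gain two derivatives without an accompanying factor of $T$), and $\|L\|$ is $T$-independent. So your Neumann series need not converge.

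The paper's proof is structurally different and is engineered precisely to manufacture the missing smallness. It builds a parametrix by gluing a boundary piece and an interior piece, following the Bahuaud--Vertman scheme. Near the singularity one covers by small charts of radial size $\varepsilon$, \emph{freezes} the coefficient $a$ at a boundary point $p$ (and at $t=0$), and uses the exact heat kernel $H_p$ of $a(p,0)^{-1}L$. The error then contains the factor $\psi(a^{-1}-a^{-1}(p,0))\cdot L H_p[\varphi f]$, and the point is that \emph{both} factors are small in the relevant H\"older norm: the first because of spatial localisation in a chart of size $\varepsilon$ (giving $\varepsilon^\gamma+T^{\alpha/2}$ up to an $\varepsilon^{-\alpha}$ loss from the cutoffs), and the second because $L H_p[\varphi f]$ vanishes at $t=0$ (giving $\varepsilon^\alpha+T^{\alpha/2}$ on $\supp\psi$). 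Choosing first $\varepsilon$ small and then $T_0<\varepsilon^2$ makes the product as small as desired; the interior error is handled by standard parabolic theory on the doubled manifold. This two-parameter balancing has no analogue in your argument, and without it the contraction fails.
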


The proof of that theorem will occupy the rest of this subsection. 
Before we proceed, let us note an immediate consequence.

\begin{thm}\label{R neu}
Let $n\geq 3$. Consider any $\g \in (0,2+\min \, \{\mu_0,\mu_1\})$ and $\alpha \in (0,1)$ sufficiently small, such that \eqref{original-mapping} holds. Then there exists $T_0 > 0$ 
sufficiently small and a bounded linear map
\[ \mathcal{R}:  \cH^{k+2, \A}_{-2+\gamma} (M,S) 
\to  \cH^{k+2, \A}_{-2+\gamma} (M \times [0,T_0],S), \]
such that if $u_0 \in  \cH^{k+2, \A}_{-2+\gamma} (M,S)$, 
then $u = \mathcal{R}u_0$ solves the initial value problem
\[ (\p_t + a^{-1}L )u = 0, \qquad u(\cdot,0) = u_0. \]
\end{thm}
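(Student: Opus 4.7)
The plan is to reduce the initial-value problem to the inhomogeneous problem already solved by Theorem \ref{Q neu}. Regarding $u_0$ as a time-independent element of $\cH^{k+2, \A}_{-2+\gamma}(M \times [0,T_0],S)$, I set
\[
\mathcal{R} u_0 := u_0 - \mathcal{Q}\bigl(a^{-1} L u_0\bigr).
\]
The required initial condition is automatic, since $\mathcal{Q}$ produces solutions with vanishing initial data, so $(\mathcal{R} u_0)(\cdot,0) = u_0$. The equation is also automatic: using $\p_t u_0 = 0$ together with $(\p_t + a^{-1}L)\mathcal{Q}(f) = f$ from Theorem \ref{Q neu}, I get
\[
(\p_t + a^{-1}L)(\mathcal{R} u_0) = a^{-1} L u_0 - a^{-1} L u_0 = 0.
\]

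For this definition to make sense I have to verify that the forcing $a^{-1} L u_0$ lies in $\cH^{k,\A}_{-4+\gamma}(M \times [0,T_0],S)$, which is the only real step of the argument. By the expansion \eqref{DD}, $L = \wt{\D}_L + (B \cdot \wt{\n} + C)$ with $\wt{\D}_L$ of the standard $x^{-2}\V_b^2$ conical form and the remainder of improved weight. Hence $L$ maps $\cH^{k+2, \A}_{-2+\gamma}(M,S)$ into $\cH^{k,\A}_{-4+\gamma}(M,S)$, and the trivial extension in time preserves this regularity on $M\times [0,T_0]$. The factor $a = 1 + u$ is uniformly bounded away from zero for $T_0$ sufficiently small, since $u$ is continuous up to the singularity with $u(\cdot,0) = 0$, and it carries enough weighted H\"older regularity for multiplication by $a^{-1}$ to preserve $\cH^{k,\A}_{-4+\gamma}$. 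Thus $\mathcal{Q}(a^{-1} L u_0) \in \cH^{k+2,\A}_{-2+\gamma}(M \times [0,T_0], S)$, and $\mathcal{R} u_0$ belongs to the claimed target space.

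The boundedness of $\mathcal{R}$ then follows at once from the boundedness of $\mathcal{Q}$, of $L$ on the spaces above, and of multiplication by $a^{-1}$; linearity is obvious from the construction. I expect no serious obstacle beyond Theorem \ref{Q neu} itself: the hard analytic work of building a parametrix for $\p_t + a^{-1}L$ has already been absorbed in $\mathcal{Q}$, and what remains is bookkeeping of weights, which is routine given the b-structure of the cone Laplacian and the fact that the deviation $a - 1$ carries a positive weight and so does not disturb the singular model operator.
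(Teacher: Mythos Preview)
Your proposal is correct and follows essentially the same approach as the paper: both define $\mathcal{R}u_0$ as $u_0$ corrected by $\mathcal{Q}(a^{-1}Lu_0)$, after checking that $a^{-1}Lu_0 \in \cH^{k,\A}_{-4+\gamma}(M\times[0,T_0],S)$. In fact your sign is the right one; the paper's formula $\mathcal{R}u_0 := u_0 + \mathcal{Q}(a^{-1}Lu_0)$ is a typo, since with a plus sign one gets $(\p_t + a^{-1}L)u = 2a^{-1}Lu_0$ rather than $0$.
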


\begin{proof}
We have $L u_0 \in  \cH^{k, \A}_{-4+\gamma} (M,S)$ and thus $a^{-1}Lu_0 \in  
\cH^{k, \A}_{-4+\gamma} (M \times [0,T_0],S)$. We set $\mathcal{R}u_0 := u_0 + \mathcal{Q}(a^{-1}Lu_0)$, which yields the 
desired solution to the initial value problem above.
\end{proof}

\subsubsection{Construction of a boundary parametrix} \ \medskip

We now begin with the proof of Theorem \ref{Q neu}. The proof 
idea is to construct boundary and interior parametrices, and glue 
them together to an approximate solution to $(\p_t + a^{-1}L )$.
Provided the error is sufficiently "nice", we can obtain a solution $\mathcal{Q}$
by a Neumann series argument. We follow the analytic path outlined in the
work by the third named author jointly with Bahuaud \cite{BahVer}.
\medskip

We start by introducing the notion of a \textit{reference covering} which is a special case of a covering considered in \cite{BahVer}. Let $U = [0,1)_x \times (-1,1)^n_z$
be a model half-cube. For each $p \in \p M$, there is a coordinate chart 
$\Phi_p: U \to W_p$ onto an open neighborhood $W_p \subset \cC(F) \subset \ov{M}$,
centered around $p_i$. Due to compactness of $\p M$, there are finitely many such charts $\{W_i,  \Phi_{p_i}\}_{i=1}^N$ covering $\cC(F)$. Together with 
the open set $W_0 = \ov{M} \backslash 
([0,\frac{1}{2})\times F)$ we obtain an open cover of $\ov{M}$ that we call 
a reference covering.\medskip

Let $\sigma: \R^+ \to [0,1]$ be a smooth cutoff function with $\sigma(s) = 1$ for $s \le 1/4$ and $\sigma(s) = 0$ for $s \ge 1/2$. Denote by $(x,z) \in \Phi_{p_i}^{-1}(W_i)$ the local coordinates on the coordinate chart $W_i$, centered around $p_i \in \p M$. Then we define 
$$
\varphi_i \left(\Phi_{p_i}(x,z)\right) := \sigma(x)\s(\|z\|), \quad 
\psi_i \left(\Phi_{p_i}(x,z)\right) := \sigma\left(\frac{x}{2}\right)
\s\left(\frac{\|z\|}{2}\right).
$$ 
Notice that $\varphi_i, \psi_i \in C^\infty(\wt{M})$ are both identically 
$1$ near $(0,0)$, and $\psi_i \equiv 1$ on $\supp \varphi_i$. 
Furthermore, $\varphi_i, \psi_i \in \textup{C}^{\alpha}_{\textup{ie}}(M)$, since they are constant 
near the cone points. These functions are illustrated in the radial direction 
in Figure \ref{fig:CutOff}.
\begin{figure}[h]
	\begin{center}
		
		\begin{tikzpicture}[scale=1.3]
		\draw[->] (-0.2,0) -- (9,0);
		\draw[->] (0,-0.2) -- (0,2.2);
		
		\draw (-0.2,2) node[anchor=east] {$1$};
		
		\draw (0,-0.45) node {$0$};
		\draw (0,2) -- (4,2);
		\draw (2,2) .. controls (3.4,2) and (2.6,0) .. (4,0);
		\draw[dashed] (2,-0.2) -- (2,2.2);
		\draw[dashed] (4,-0.2) -- (4,2.2);
		\draw (4,2) .. controls (5.4,2) and (4.6,0) .. (8,0);
		\draw[dashed] (4,-0.2) -- (4,2.2);
		\draw[dashed] (8,-0.2) -- (8,2.2);
		
		\draw (2.5,1) node {$\varphi_i$};
		\draw (4.5,1) node {$\psi_i$};
		
		\draw (2,-0.5) node {$1/4$};
		\draw (4,-0.5) node {$1/2$};
		\draw (8,-0.5) node {$1$};

		\end{tikzpicture}
		
		\caption{The cutoff functions $\varphi_i, \psi_i$.}
		\label{fig:CutOff}
	\end{center}
\end{figure}
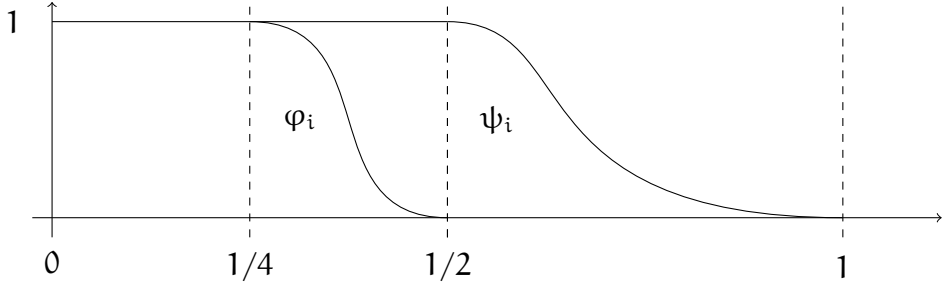

Next, for $\ve \in (0,1)$ to be specified later, and 
any $p = z_0 \in W_i \cap \p M$ we set
\[ \wt{\varphi}_{i,p}(x,z) := 
\varphi_i \left(\frac{x}{\ve}, z-z_0 \right), 
\;\; \wt{\psi}_{i,p}(x,z) := \psi_i \left(\frac{x}{\ve}, z-z_0 \right). \]
This defines smooth cutoff functions 
$\wt{\varphi}_{i,p}, \wt{\psi}_{i,p} \in C^\infty(\ov{M})$
which again lie in $\textup{C}^{k,\alpha}_{\textup{ie}}(M)$, with the H\"older norms
bounded by $\ve^{-\alpha} \| \varphi_i \|_{\textup{C}^{k,\alpha}_{\textup{ie}}(M)}$ and 
$\ve^{-\alpha} \| \psi_i \|_{\textup{C}^{k,\alpha}_{\textup{ie}}(M)}$,
respectively
\begin{equation}\label{hoelder-phi-estimate-0}
\| \wt{\varphi}_{i,p} \|_{\textup{C}^{k,\alpha}_{\textup{ie}}(M)} \leq \ve^{-\alpha} \| \varphi_i \|_{\textup{C}^{k,\alpha}_{\textup{ie}}(M)}, \quad 
\| \wt{\psi}_{i,p} \|_{\textup{C}^{k,\alpha}_{\textup{ie}}(M)} \leq \ve^{-\alpha} \| \psi_i \|_{\textup{C}^{k,\alpha}_{\textup{ie}}(M)}.
\end{equation}
Let $L_{i}$ be the lattice of points in the coordinate chart $W_i$ of the form $\{ (0, w) \, | \, w \in \Z^n\}$. 
By construction, every point on $\p M$ lies 
in the support of at most a fixed number (independent of $\ve$) of functions $\{ \wt{\psi}_{i,p} \, | \, i = 1,\dots,N, 
p \in L_{i}\}$. Furthermore, let $F_{\ve} = F_{\ve}(x)$ be a cutoff function with $\supp F_{\ve} \subset \{x \ge \ve/4\} \subset M$,
which is identically $1$ on the set $\{x \ge \ve\}$. Let $G_{\ve}(x) = F_{\ve}(\frac{x}{2})$.
For any $p \in L_i$ we normalize
\begin{equation}
\varphi_{i,p} := \frac{\wt{\varphi}_{i,p}}{G_{\ve} + \sum_k \sum_{q \in L_{k}} \wt{\varphi}_{k,q}}, \;\; \psi_{i,p} := 
\frac{\wt{\psi}_{i,p}}{F_{\ve} + \sum_k \sum_{q \in L_{k}} \wt{\varphi}_{k,q}}.
\end{equation}
By \eqref{hoelder-phi-estimate-0}, these functions 
again lie in $\textup{C}^{\alpha}_{\textup{ie}}(M)$, with 
\begin{equation}\label{hoelder-phi-estimate}
\begin{split}
&\| \varphi_{i,p} \|_{\textup{C}^{k,\alpha}_{\textup{ie}}(M)} \leq \textup{const} 
\cdot \ve^{-\alpha} \| \varphi_i \|_{\textup{C}^{k,\alpha}_{\textup{ie}}(M)}
\leq \textup{const} \cdot \ve^{-\alpha}, \\ 
&\| \psi_{i,p} \|_{\textup{C}^{k,\alpha}_{\textup{ie}}(M)} \leq \textup{const} 
\cdot \ve^{-\alpha} \| \psi_i \|_{\textup{C}^{k,\alpha}_{\textup{ie}}(M)}
\leq \textup{const} \cdot \ve^{-\alpha}.
\end{split}
\end{equation}
Finally note that due to normalization, for any $0 < \ve < 1$, the functions
\[ \Phi := \sum_i \sum_{p \in L_{i}} \varphi_{i,p}, \;\; \Psi := \sum_i \sum_{p \in L_{i}} \psi_{i,p} \]
are smooth cutoff functions in $C^\infty(\ov{M})$ which are identically $1$ in a neighborhood of the cone singularity. We can now introduce 
an approximate boundary parametrix. Consider any 
$f \in  \cH^{k, \A}_{-4+\gamma} (M \times [0,T_0],S)$
and denote by $H_p$ the heat kernel of\footnote{Recall that 
$g(p,t) = a(p,t)g(p) \oplus \w(p,t)$.} $a(p,0)^{-1}L$ for any $p\in \p M$
(note this is simply a rescaling of $L$ by a positive constant). Then we define 
our boundary parametrix by
\begin{align}\label{Q-parametrix}
Q_b f := \sum_{j=1}^N \sum_{p \in L_{j}} \psi_{j,p}H_p [\varphi_{j,p}f]. 
\end{align}

\begin{lem}\label{Ejp}
The solution $u_p := \psi_{j,p} H_p [\varphi_{j,p}f]$ satisfies
\[ (\p_t + a^{-1}L) u_p = \varphi_{j,p} f + E_{j,p}^0 f + E_{j,p}^1 f, \]
where the operators $E^0_{j,p}$ and $E^1_{j,p}$ are as follows.
\begin{itemize}
  \item[(i)] $E^0_{j,p}$ is a bounded linear map on $\cH^{k, \A}_{-4+\gamma} (M \times [0,T],S)$, 
  and there exists a constant $C > 0$ independent of $j,p$, such that for $T/\ve^2< 1$
  $$\|E^0_{j,p}f\| \le C(\ve^\g + T^{\a/2}) \|\varphi_{j,p}f\|,$$
  \item[(ii)] $E^1_{j,p}$ is a bounded linear map on $\cH^{k, \A}_{-4+\gamma} (M \times [0,T],S)$, 
  with operator norm satisfying $\|E^1_{j,p}\| \to 0$ as $T \to 0^+$.
\end{itemize}
\end{lem}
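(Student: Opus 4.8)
The plan is to compute $(\p_t + a^{-1}L)u_p$ directly using the Leibniz rule, and then sort the resulting error terms by their support: terms supported near the cone tip will be absorbed into $E^0_{j,p}$, while terms supported away from the tip (where the cutoffs $\varphi_{j,p}, \psi_{j,p}$ have their gradients localized on an annulus bounded away from $x=0$) will go into $E^1_{j,p}$. First I would write, using that $\psi_{j,p} \equiv 1$ on $\supp \varphi_{j,p}$ and that $H_p$ is the heat kernel of $a(p,0)^{-1}L$,
\begin{align*}
(\p_t + a^{-1}L)u_p &= \psi_{j,p}(\p_t + a(p,0)^{-1}L)H_p[\varphi_{j,p}f] + \big(a^{-1} - a(p,0)^{-1}\big)\psi_{j,p} L H_p[\varphi_{j,p}f] \\
&\quad + [a^{-1}L, \psi_{j,p}]H_p[\varphi_{j,p}f] \\
&= \varphi_{j,p}f + \big(a^{-1}-a(p,0)^{-1}\big)\psi_{j,p}LH_p[\varphi_{j,p}f] + [a^{-1}L,\psi_{j,p}]H_p[\varphi_{j,p}f],
\end{align*}
since the first term on the right is exactly $\psi_{j,p}\varphi_{j,p}f = \varphi_{j,p}f$. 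This identifies $E^0_{j,p}f := (a^{-1}-a(p,0)^{-1})\psi_{j,p}LH_p[\varphi_{j,p}f]$ and $E^1_{j,p}f := [a^{-1}L,\psi_{j,p}]H_p[\varphi_{j,p}f]$.

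For (i): the commutator-free term $E^0_{j,p}$ measures the error from freezing the coefficient $a$ at the tip value $a(p,0)$. I would use the mapping property \eqref{original-mapping2}, namely $LH_p: \cH^{k,\A}_{-4+\gamma} \to \cH^{k,\A}_{-4+\gamma}$ with norm bounded uniformly, together with the multiplication estimate: $a^{-1} - a(p,0)^{-1}$ lies in $\cH^{k,\A}_{\textup{ie}}$ and on the $\ve$-scale support of $\psi_{j,p}$ (where $x \lesssim \ve$ and $t \le T$) its sup-norm is bounded by $C(\ve^{\gamma_1} + T^{\A/2})$, using that $a = 1 + u$ with $u \in \cH^{k+2,\A}_{\textup{ie}}(M\times[0,T],S_1)^b_{\gamma_1}$ so $u$ vanishes to order $x^{\gamma_1}$ at $x=0$ and is Hölder-$\A/2$ in time with $u(\cdot,0)=0$ along the trace part — wait, more precisely $u$ need not vanish at $t=0$, but $a(p,t)-a(p,0)$ does vanish at $t=0$ giving the $T^{\A/2}$ factor, and $a(q,0)-a(p,0)$ is controlled by $d(q,p)^{\gamma_1} \lesssim \ve^{\gamma_1}$ on the support. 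Since $\gamma_1 \ge \ov{\g}$ is not automatic, I would use $\ov{\g} \le \gamma_1$… actually $\ov{\g} = \min\{\gamma_0,\gamma_1\}$ so one reads off the factor $\ve^{\ov{\g}}$, which after renaming is the stated $\ve^\g$ (with the paper's convention $\gamma$ here playing the role of the smallest weight). Multiplying, $\|E^0_{j,p}f\| \le C(\ve^\g + T^{\A/2})\|LH_p[\varphi_{j,p}f]\| \le C'(\ve^\g+T^{\A/2})\|\varphi_{j,p}f\|$, uniformly in $j,p$ because the cutoffs $\psi_{j,p}$ have uniformly bounded overlap and the rescaling of $L$ by the constant $a(p,0)$ is uniform.

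For (ii): the commutator $[a^{-1}L,\psi_{j,p}]$ is a first-order operator whose coefficients involve $\n\psi_{j,p}$ and $\D\psi_{j,p}$ (times $a^{-1}$ and its derivatives), hence is supported in the annular region $\{\ve/4 \le x \le \ve\} \times \{\ve/4 \le \|z\| \le \ve\}$ — crucially away from the cone tip and, for fixed $\ve$, in a compact region of $M$. There one can apply Lemma \ref{Lemma 4.2} and Corollary \ref{cor to Lemma 4.2}: schematically $E^1_{j,p} = \eta\,(\n\psi_{j,p})\cdot \n H_p\, \varphi_{j,p} + \eta\,(\D\psi_{j,p}) H_p\,\varphi_{j,p}$ with $\eta = a^{-1}$ (or its first derivative) smooth and compactly supported away from $x=0$, so these are exactly of the form $G_0$ and $\eta H \rho$ in the cited results, whose operator norms tend to $0$ as $T\to 0$. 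The $\ve$-dependence of $\|\n\psi_{j,p}\|$ is harmless here since $\ve$ is fixed before $T$ is chosen.

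The main obstacle is the uniformity in $p$ of the constant $C$ in (i) while simultaneously extracting the sharp $\ve^\g$ gain. This requires (a) that the heat kernel $H_p$ of $a(p,0)^{-1}L$ and its composition with $L$ satisfy \eqref{original-mapping2} with a bound independent of the scaling constant $a(p,0)$, which follows since $a(p,0)$ ranges over a compact subset of $(0,\infty)$ and rescaling the operator rescales time, not the spatial structure; and (b) a careful Hölder estimate of the multiplier $(a^{-1}-a(p,0)^{-1})\psi_{j,p}$ showing its $\cH^{k,\A}_{\textup{ie}}$-norm — not just its sup-norm — is $\le C(\ve^\g + T^{\A/2})$, which uses the $b$-type weighted regularity of $u$ (so that $\V_b$-derivatives of $a-1$ are again $O(x^{\gamma_1})$) together with the localization to $x \lesssim \ve$ forced by $\supp\psi_{j,p}$ and the condition $T/\ve^2 < 1$ that couples the parabolic time-scale to $\ve$.
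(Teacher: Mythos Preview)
Your decomposition into $E^0_{j,p}$ and $E^1_{j,p}$ is exactly the paper's, and your treatment of $E^1_{j,p}$ via Lemma~\ref{Lemma 4.2} and Corollary~\ref{cor to Lemma 4.2} matches the paper. The gap is in your estimate of $E^0_{j,p}$.

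You claim that the multiplier $m:=(a^{-1}-a(p,0)^{-1})\psi_{j,p}$ has full $\mathcal{C}^{k,\alpha}_{\textup{ie}}$-norm bounded by $C(\ve^\gamma+T^{\alpha/2})$. This is not true: while the sup-norm and the sup-norms of $\mathcal{V}_b$-derivatives of $m$ on $\supp\psi_{j,p}$ are indeed of that size, the H\"older seminorm is not. On the region where $\psi_{j,p}\equiv 1$, the spatial H\"older quotient of $m$ coincides with that of $a^{-1}$, which is $O(1)$, not $O(\ve^\gamma)$; and the cutoff itself contributes a factor $\ve^{-\alpha}$ via \eqref{hoelder-phi-estimate}. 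The paper's estimate \eqref{A-est} accordingly only gives
\[
\|\psi_{j,p}(a^{-1}-a^{-1}(p,0))\|_{\mathcal{C}^{k,\alpha}_{\textup{ie}}}\le C\,\ve^{-\alpha}(\ve^\gamma+T^{\alpha/2}).
\]
Since you then bound $\|LH_p[\varphi_{j,p}f]\|$ only by $C\|\varphi_{j,p}f\|$, your product estimate would yield $\|E^0_{j,p}f\|\le C\,\ve^{-\alpha}(\ve^\gamma+T^{\alpha/2})\|\varphi_{j,p}f\|$, with the bad factor $\ve^{-\alpha}$ unremoved.

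The missing idea is that $LH_p[\varphi_{j,p}f]$ itself carries an extra small factor: because $H_p[\varphi_{j,p}f]$ vanishes at $t=0$, the paper uses a second mapping property $H_p\circ\varphi:\cH^{k,\alpha}_{-4+\gamma}\to t^{\alpha/2}\cH^{k+2,0}_{-2+\gamma}$ to obtain, after localising to $\supp\psi_{j,p}$,
\[
\|LH_p[\varphi_{j,p}f]\|_{\cH^{k,\alpha}_{-4+\gamma}(\supp\psi_{j,p}\times[0,T])}\le C(\ve^\alpha+T^{\alpha/2})\|\varphi_{j,p}f\|.
\]
Multiplying the two estimates gives $C(\ve^\gamma+T^{\alpha/2})\,\ve^{-\alpha}(\ve^\alpha+T^{\alpha/2})=C(\ve^\gamma+T^{\alpha/2})(1+(T/\ve^2)^{\alpha/2})$, and \emph{this} is where the hypothesis $T/\ve^2<1$ enters, bounding the last parenthesis by $2$. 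You invoke $T/\ve^2<1$ in your final paragraph but attach it to the wrong factor; it does not repair the multiplier estimate, it compensates for it via the second factor.
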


\begin{proof}
We simplify notation by omitting the subscripts on $\varphi, \psi$ and $E^0, E^1$. 
Then we compute in view of \eqref{DD}
\begin{align*}
&(\p_t + a^{-1}L )u_p 
 = (\p_t + a^{-1}L )(\psi H_p [\varphi f])\\
& = \psi \p_t H_p [\varphi f] +a^{-1}\left(\wt{\D} \psi \cdot H_p [\varphi f] + 
2 \wt{g}^{ij} \wt{\n}_i \psi \left( \wt{\n}_j H_p [\varphi f] + B \cdot H_p [\varphi f]\right) + \psi L H_p [\varphi f]\right) \\
& = \psi (\p_t + a^{-1}(p,0)L) H_p [\varphi f] + \psi (a^{-1}-a^{-1}(p,0)) L H_p [\varphi f] \\
&+a^{-1} \wt{\D} \psi \cdot H_p [\varphi f] +a^{-1} 2 \wt{g}^{ij} \wt{\n}_i \psi 
\left( \wt{\n}_j H_p [\varphi f] + B \cdot H_p [\varphi f]\right) =: \psi \varphi f + E^0 f + E^1 f,
\end{align*}
where the operators $E^0$ and $E^1$ are explicitly given by
\begin{align*}
& E^0 f := \psi (a^{-1}-a^{-1}(p,0)) L H_p [\varphi f],   \\
& E^1 f := a^{-1} \wt{\D} \psi \cdot H_p [\varphi f] + 2a^{-1} \wt{g}^{ij} \wt{\n}_i \psi 
\left( \wt{\n}_j H_p [\varphi f] + B \cdot H_p [\varphi f]\right).
\end{align*}
Since $\psi \equiv 1$ on $\supp \varphi$, any derivative of $\psi$ vanishes on a neighborhood of the 
conical singularity. Hence the claimed mapping properties of $E^1$ follow from Lemma \ref{Lemma 4.2} and Corollary \ref{cor to Lemma 4.2}.
What is left is establishing the mapping properties of $E^0$. 
Writing out definitions of the various H\"older norms, we find
\begin{equation} \label{Schauder-E0}
\begin{split}
\| E^0 f \|_{\cH^{k, \A}_{-4+\gamma}} &\le C \, \|\psi \, (a^{-1}-a^{-1}(p,0))\|_{\mathcal{C}^{k,\alpha}_{\textup{ie}}} 
\cdot \|L H_p [\varphi f]\|_{\cH^{k, \A}_{-4+\gamma}}.
\end{split}
\end{equation}
We write $p = (0,z_0)$. For any $(x,z)$ in the support of $\psi$ we obtain
\begin{align*}
|a^{-1}(x,z,t) - a^{-1}(0,z_0,0)| & \le 
|x|^\g \|x^{-\g} (x \p_x) a^{-1}\|_\infty  + t^{\frac{\a}{2}} \|a^{-1}\|_\a  \\
& \le |x|^\g \|x^{-\g} (x \p_x) a\|_\infty  + t^{\frac{\a}{2}} \|a\|_\a  \\
& \le C (d_M(p,q)^\g + t^{\frac{\a}{2}}) \|a\|_{C^{1,\alpha}_{\textup{ie}}(M\times [0,T],S_1)^b_\g} \\
& \le C (\varepsilon^\g + T^{\frac{\a}{2}}) \|a\|_{C^{1,\alpha}_{\textup{ie}}(M\times [0,T],S_1)^b_\g},
\end{align*}
where we used that $a(0,\cdot,t)$ is constant in $z$, 
and $\p_x a^{-1} = -\p_x a \cdot a^{-2}$. Recall
the notation $\mathscr{D}$ of Definition \ref{funny-spaces0}.
Then for any partial differential operator $X \in \mathscr{D}$
of order at least $1$, we compute similar to above
for any $(x,z) \in \supp \psi$
\begin{align*}
&|X (a^{-1}(x,z,t) - a^{-1}(0,z_0,0))| = |X a^{-1}(x,z,t)|  \le 
|x|^\g \|x^{-\g} X a^{-1}\|_\infty \\ & \le C \varepsilon^\g \|a\|_{C^{1,\alpha}_{\textup{ie}}(M\times [0,T],S_1)^b_\g} 
 \le C (\varepsilon^\g + T^{\frac{\a}{2}})  \|a\|_{C^{1,\alpha}_{\textup{ie}}(M\times [0,T],S_1)^b_\g}.
\end{align*}
Analogous estimates hold for H\"older differences. Noting 
\eqref{hoelder-phi-estimate}, we conclude
\begin{align}\label{A-est}
 \|\psi \, (a^{-1}-a^{-1}(p,0))\|_{\mathcal{C}^{k,\alpha}_{\textup{ie}}} 
 \le C \varepsilon^{-\alpha} (\varepsilon^\g + T^{\frac{\a}{2}}) \|a\|_{\mathcal{C}^{k+1,\alpha}_{\textup{ie}}(M\times [0,T],S_1)^b_\g}.
\end{align}
Now we estimate $L H_p [\varphi f]$. We first note that it vanishes identically at $t=0$. 
This is obtained from the following mapping properties of the heat operator
\begin{align*}
&H_p \circ \varphi : \cH^{k, \A}_{-4+\gamma} (M\times [0,T], S)
\to \cH^{k+2, \A}_{-2+\gamma} (M\times [0,T], S), \\
&H_p \circ \varphi : \cH^{k, \A}_{-4+\gamma} (M\times [0,T], S)
\to t^{\frac{\alpha}{2}}\cH^{k+2, 0}_{-2+\gamma} (M\times [0,T], S),
\end{align*}
where the first mapping property is due to \eqref{original-mapping}
and the second is obtained by similar arguments, converting the
lesser target regularity into a time weight. Hence 
$L H_p [\varphi f] \in \cH^{k, 0}_{-4+\gamma}$ 
vanishes identically at $t=0$ and hence 
\begin{align}
&\|L H_p [\varphi f]\|_{\cH^{k, 0}_{-4+\gamma}} 
= \|L H_p [\varphi f] - L H_p [\varphi f](t=0)\|_{\cH^{k, 0}_{-4+\gamma}}
\\ &\le C T^{\frac{\alpha}{2}} \|[\varphi f]\|_{\cH^{k, \alpha}_{-4+\gamma}}
\le C  (\varepsilon^\alpha + T^{\frac{\a}{2}}) \|[\varphi f]\|_{\cH^{k, \alpha}_{-4+\gamma}}.
\end{align}
Analogous estimates hold for H\"older differences, if we take supremums over
$\supp \psi$. Hence overall we arrive at the estimate 
\begin{align}\label{B-est}
\|L H_p [\varphi f]\|_{\cH^{k, \alpha}_{-4+\gamma}(\supp \psi \times [0,T], S)} 
\le C  (\varepsilon^\alpha + T^{\frac{\a}{2}}) \|[\varphi f]\|_{\cH^{k, \alpha}_{-4+\gamma}}.
\end{align}
Plugging the estimates \eqref{A-est} and \eqref{B-est} into \eqref{Schauder-E0}, 
we conclude for $T/\ve^2< 1$
\begin{align*}
\| E^0 f \|_{\cH^{k, \A}_{-4+\gamma}} & \le 
C  (\varepsilon^\g + T^{\frac{\a}{2}}) \ve^{-\alpha}  (\varepsilon^\alpha + T^{\frac{\a}{2}}) 
 \|[\varphi f]\|_{\cH^{k, \alpha}_{-4+\gamma}} \\
 & \le 
C  (\varepsilon^\g + T^{\frac{\a}{2}})  (1 + (T/\ve^2)^{\frac{\a}{2}}) 
 \|[\varphi f]\|_{\cH^{k, \alpha}_{-4+\gamma}} \\
 & \le 
C  (\varepsilon^\g + T^{\frac{\a}{2}}) 
 \|[\varphi f]\|_{\cH^{k, \alpha}_{-4+\gamma}}.
 \end{align*}
This finishes the proof. 
\end{proof}

\begin{prop}\label{Qb}
Let $\a \in (0,\min \, \{\gamma, 1\})$. For every $\delta > 0$, there exists $\ve > 0$ and $T_0 > 0$ sufficiently small, 
such that the heat parametrix defined in \eqref{Q-parametrix}
\[ Q_b: \cH^{k, \A}_{-4+\gamma} (M \times [0,T_0],S) \to \cH^{k+2, \A}_{-2+\gamma} (M \times [0,T_0],S), \]
is a bounded linear map, solving 
\[ (\p_t + a^{-1}L )(Q_b f) = \Phi f + E^0 f + E^1 f, \]
where $E^0$ and $E^1$ are bounded linear maps on $\cH^{k, \A}_{-4+\gamma} (M \times [0,T_0],S)$
with operator norms $\|E^0\| < \delta$ and $\|E^1\| \to 0$ as $T_0 \to 0^+$. 
\end{prop}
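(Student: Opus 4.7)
My plan is to apply Lemma \ref{Ejp} to each building block $u_{j,p}=\psi_{j,p}H_p[\varphi_{j,p}f]$ of the parametrix, sum the resulting identities, and then control the two error sums using the finite overlap property built into the cover $\{\psi_{j,p},\varphi_{j,p}\}$. First, summing Lemma \ref{Ejp} over $(j,p)$ yields
\[
(\p_t + a^{-1}L)(Q_b f) \;=\; \sum_{j,p} \varphi_{j,p} f \;+\; \sum_{j,p} E^0_{j,p} f \;+\; \sum_{j,p} E^1_{j,p} f \;=\; \Phi f + E^0 f + E^1 f,
\]
where I set $E^0 := \sum_{j,p} E^0_{j,p}$ and $E^1 := \sum_{j,p} E^1_{j,p}$, noting that the equality $\sum_{j,p}\varphi_{j,p}=\Phi$ is exactly the definition of $\Phi$. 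The boundedness $Q_b\colon \cH^{k,\A}_{-4+\gamma}\to \cH^{k+2,\A}_{-2+\gamma}$ follows from the mapping property \eqref{original-mapping2} of each $H_p$, together with the Hölder regularity of the cutoffs \eqref{hoelder-phi-estimate} and the fact that the cover $\{\supp\psi_{j,p}\}$ has overlap bounded by a universal constant $N_0$ independent of $\ve$.

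Next I would control $E^0$. Lemma \ref{Ejp}(i) supplies, under $T/\ve^2<1$, the pointwise bound
\[
\|E^0_{j,p} f\|_{\cH^{k,\A}_{-4+\gamma}} \;\le\; C(\ve^\g + T^{\A/2})\,\|\varphi_{j,p} f\|_{\cH^{k,\A}_{-4+\gamma}}.
\]
Because each point of $M$ meets at most $N_0$ of the supports, the summation over $(j,p)$ gives
\[
\|E^0 f\|_{\cH^{k,\A}_{-4+\gamma}} \;\le\; C'(\ve^\g + T^{\A/2})\,\|f\|_{\cH^{k,\A}_{-4+\gamma}},
\]
with $C'$ independent of $\ve$ and $T$. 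Given $\delta>0$, I would first choose $\ve$ so small that $C'\ve^\g<\delta/2$, and then pick $T_0\in(0,\ve^2)$ so small that $C'T_0^{\A/2}<\delta/2$; this yields $\|E^0\|<\delta$ together with the compatibility condition needed in Lemma \ref{Ejp}. An entirely analogous summation using Lemma \ref{Ejp}(ii), whose errors are supported in the annular region where $\psi_{j,p}$ varies and hence away from the cone tip, gives $\|E^1\|\to 0$ as $T_0\to 0^+$.

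The main obstacle I expect is the Hölder-norm bookkeeping in the finite-overlap summation: the number of cutoffs blows up as $\ve\to 0$, so a naive triangle inequality $\|\sum_k u_k\|_\A\le \sum_k \|u_k\|_\A$ is useless. The resolution is to localize the Hölder difference quotients. For points $(p,t),(p',t')$ that are close (on the scale $\ve$), only $O(N_0)$ many terms in the sum are simultaneously nonzero in a neighborhood containing both, so the Hölder seminorm of $\sum_{j,p}u_{j,p}$ at that pair is bounded by the \emph{maximum} (not the sum) of the individual seminorms, up to the combinatorial factor $N_0$. For widely separated pairs the Hölder quotient reduces to a supremum estimate, which again is controlled by the overlap count via the same argument. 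Once this bookkeeping is in place, the estimates above assemble immediately into the statement $(\p_t+a^{-1}L)(Q_b f)=\Phi f+E^0 f+E^1 f$ with $\|E^0\|<\delta$ and $\|E^1\|\to 0$, completing the proof.
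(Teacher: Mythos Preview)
Your overall strategy---apply Lemma \ref{Ejp} termwise, sum, and define $E^0,E^1$ as the sums of the individual errors---is exactly the paper's approach. However, there is a gap in your constant tracking for $E^0$, and it hides precisely the place where the hypothesis $\alpha\in(0,\min\{\gamma,1\})$ is used.

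First, a misconception: in the paper's construction the lattices $L_j$ are \emph{independent of $\ve$} (only the $x$-variable is rescaled in $\wt\varphi_{j,p},\wt\psi_{j,p}$), so the total number of pairs $(j,p)$ is a fixed finite number. The elaborate finite-overlap H\"older bookkeeping you anticipate is therefore unnecessary; a plain triangle inequality over finitely many terms suffices, and this is what the paper does.

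The real issue is that Lemma \ref{Ejp}(i) bounds $\|E^0_{j,p}f\|$ by $C(\ve^\gamma+T^{\alpha/2})\|\varphi_{j,p}f\|$, not by the same quantity with $\|f\|$ on the right. Passing from $\|\varphi_{j,p}f\|$ to $\|f\|$ costs a factor $\|\varphi_{j,p}\|_{\mathcal{C}^{k,\alpha}_{\textup{ie}}}\le C\ve^{-\alpha}$ by \eqref{hoelder-phi-estimate}, and your localization argument does not avoid this: even for widely separated pairs the H\"older quotient $|u(q)-u(q')|/d(q,q')^\alpha$ is controlled only by $\ve^{-\alpha}\|u\|_\infty$, and for close pairs each surviving term already carries the $\ve^{-\alpha}$ from $\|\varphi_{j,p}f\|$. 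So the correct bound is
\[
\|E^0 f\|\le C\bigl(\ve^{\gamma-\alpha}+\ve^{-\alpha}T_0^{\alpha/2}\bigr)\|f\|,
\]
and it is here that one must invoke $\alpha<\gamma$: first choose $\ve$ so that $C\ve^{\gamma-\alpha}$ is small (this needs $\gamma-\alpha>0$), then choose $T_0<\ve^2$ so that $C\ve^{-\alpha}T_0^{\alpha/2}$ is small. Your argument, as written, never uses the hypothesis $\alpha<\gamma$, which is a signal that the claimed $\ve$-independent constant $C'$ cannot be right.
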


\begin{proof}
By Lemma \ref{Ejp} we compute for any $f\in \cH^{k, \A}_{-4+\gamma} (M \times [0,T_0],S)$
\begin{align*}
(\p_t + a^{-1}L) (Q_b f) & = \sum_{j=1}^N \sum_{p \in L_j} (\p_t + a^{-1}L) \psi_{j,p}H_p [\varphi_{j,p}f] 
= \Phi f + E^0 f + E^1 f, 
\end{align*}
where we have defined for $i=0,1$
\[ E^i f := \sum_{j=1}^N \sum_{p \in L_j} E^i_{j,p} f. \]
The operators $E^i_{j,p}$, and hence also both $E^0$ and $E^1$, are bounded operators on $\cH^{k, \A}_{-4+\gamma} (M \times [0,T_0],S)$
by Lemma \ref{Ejp}. It remains to estimate their operator norms as $T_0\to 0$. The fact that $\|E^1\| \to 0$ as $T_0 \to 0^+$ is a direct consequence of
the second statement in Lemma \ref{Ejp}. For the estimate of $\|E^0\|$ we argue as follows. We choose $\ve > 0$ and $T_0 > 0$ with $T_0 < \ve^2$. 
Then by Lemma \ref{Ejp} and \eqref{hoelder-phi-estimate} we have
\begin{align*}
\|E^0_{j,p}f\|_{\cH^{k, \A}_{-4+\gamma}} 
&\le C(\ve^\g + T_0^{\a/2}) \|\varphi_{j,p}f\|_{\cH^{k, \A}_{-4+\gamma}}
\\ &\le C \ve^{-\alpha}(\ve^\g + T_0^{\a/2}) \| f\|_{\cH^{k, \A}_{-4+\gamma}}
\\ &= C (\ve^{\g-\alpha} + \ve^{-\alpha} T_0^{\a/2}) \| f\|_{\cH^{k, \A}_{-4+\gamma}}
\end{align*}
Now set $\d' := \frac{\d}{ \sum_j |L_{j}|N}> 0$ for a given $\d>0$. Then fix $\ve > 0$ sufficiently small $C\ve^{\g-\alpha} < \d'/2$
(note that $\g - \alpha > 0$). For the fixed $\ve>0$ choose $T_0 > 0$ sufficiently small, such that $C\ve^{-\alpha} T_0^{\a/2}< \d'/2$. 
These choices yield $\|E^0\| < \d$,
and the proof is finished. 
\end{proof}

\subsubsection{Construction of an interior parametrix} 

Next we construct an approximate interior parametrix. This construction is analogous to the one in \cite{BahVer}, 
but for the convenience of the reader we repeat it here. \medskip

Recall that the radial function of the cone $x: \cC(F) \to (0,1)$
is extended smoothly to a nowhere vanishing function $x\in C^\infty(M)$. We assume that $x \ge 1$ outside of the
singular neighborhood $\cC(F)$. For $\ve > 0$ small enough, $Y_{\ve} := \{x \ge \ve/2\} \subset M$ is a manifold with 
smooth boundary $\{\ve/2\}\times F$. Let $\ov{Y}$ denote the double of $Y_{\ve}$. Since the Riemannian metric on 
$\ov{Y}$ need not be smooth, we smoothen the metric in a small collar neighborhood of $\{\ve/2\}\times F$, such that the 
metrics on $\ov{Y}$ and $M$ coincide over $Y_{2\ve}$. \medskip

Since $\Phi \in C^\infty(\ov{M})$ is by construction identically $1$ in $\{x < \ve\}$, the function $1-\Phi$ defines a 
smooth cutoff function on the closed double $\ov{Y}$, which is again denoted by $1-\Phi$. 
Let $\ov{P}$ denote the extension of $P = \p_t + a^{-1}L $ to a uniformly parabolic operator on $\ov{Y}$. 
Note that $\cH^{k, \A}_{-4+\gamma} \restriction Y_{\ve} \equiv \mathcal{C}^{k,\A}_{\textup{ie}}(Y_{\ve}\times [0,T_0],S)$. 
Let $\wt{Q}_i: \mathcal{C}^{k,\A}_{\textup{ie}}(Y_{\ve}\times [0,T_0],S) \to \mathcal{C}^{k+2,\A}_{\textup{ie}}(Y_{\ve}\times [0,T_0],S)$ 
be the solution operator of the inhomogeneous Cauchy problem
\[ \ov{P}u = (1-\Phi)f, \qquad u(\cdot,0) = 0. \]
Also let $\chi$ be any smooth cutoff function which is identically $1$ on $\supp (1-\Phi)$. Then we define the interior parametrix 
for any $f\in \cH^{k, \A}_{-4+\gamma}$ by
\[ Q_i f = \chi \wt{Q}_i [(1-\Phi)f]. \]

\subsubsection{Construction of the parametrix}
From the approximate boundary and interior parametrices we obtain an approximate parametrix by 
\[ Qf := Q_b f + Q_i f \]
for $f \in \cH^{k, \A}_{-4+\gamma}(M \times [0,T_0],S)$. From there we get a parametrix as follows. 
\begin{proof}[Proof of Proposition \ref{Q neu}]
By Proposition \ref{Qb} we conclude
\begin{align*}
(\p_t + a^{-1}L) Qf  = \Phi f + E^0 f + E^1 f + (1-\Phi)f + E^2 f=: f + Ef,  
\end{align*} 
where $E^2 f := [\chi, a^{-1}L] \wt{Q}_i [(1-\Phi)f]$. Now similarly as in
Lemma \ref{Lemma 4.2} and Corollary \ref{cor to Lemma 4.2} we find that 
$\|E^2\| \to 0$ as $T_0 \to 0$. Hence we can choose $T_0$ sufficiently small, such that the error term 
$Ef := E^0 f + E^1 f + E^2 f$ has operator norm less than $1$. Then $I + E$, as an operator on 
$\cH^{k, \A}_{-4+\gamma}(M \times [0,T_0],S)$, is invertible, and we set
\[ \mathcal{Q} := Q (I + E)^{-1}. \]
\end{proof}

\subsection{Extension of parametrices to the full time interval} \medskip

In this subsection we extend the existence results of Theorems \ref{Q neu} and \ref{R neu}
from the shorter time interval $[0,T_0]$ to the full time interval $[0,T]$. 
By Theorem \ref{Q neu}, for any $f\in \cH^{k, \A}_{-4 + \gamma} (M\times [0,T], S)$ there exists a $T_0 \in (0,T]$ and a 
solution $u = \mathcal{Q}f \in \cH^{k, \A}_{-2 + \gamma} (M\times [0,T_0], S)$ to the parabolic initial value problem
\[ (\partial_t + a^{-1}L) u = f, \quad u(t=0) = 0. \]
If $T_0 < T$, we consider the homogeneous Cauchy problem
\begin{equation*}
 (\partial_t + a^{-1}L) v_1 = 0, \quad v_1(t=0) = u(t=T_0), 
\end{equation*}
where the initial data $u(t=T_0) \in \cH^{k, \A}_{-2 + \gamma}(M,S)$.  
By Theorem \ref{R neu}, the solution to this problem, $v_1 = \mathcal{R} v_1(t=0)$, 
exists on the time interval $[0,T_0]$ independent of the initial value $u(t=T_0)$.  
We may use Theorem \ref{Q neu} to solve
\begin{equation*}
 (\partial_t + a^{-1}L) u_1 = f(p, t+T_0), u_1(0) = 0,
\end{equation*}
on the interval $[T_0, 2T_0]$, and then the function
\[ \widetilde{u}(p,t) = \begin{cases}
  u(p,t) & \text{for $0 \leq t \leq T_0$} \\
  u_1(p, t-T_0) + v_1(p,t-T_0) & \text{for $T_0 < t \leq 2T_0$}\\
\end{cases}
\]
extends $u$ past $T_0$.  This process continues until $nT_0 > T$, 
and produces a solution $u$ in $\cH^{k, \A}_{-2 + \gamma} (M\times [0,T], S)$.
Thus Theorem \ref{Q neu} and hence also Theorem \ref{R neu} hold on the
full time interval $[0,T]$.

\begin{rem}
To see that the solution produced in this way indeed has the claimed regularity (in particular including H\"older regularity in time), we observe that instead of piecing together solutions on the time intervals $[0,T_0], [T_0,2T_0]$ etc. we could have instead chosen overlapping intervals $[0,T_0], [T_0 - \ve, 2T_0 - \ve]$ etc. with a small $\ve > 0$. Then these solutions agree on the overlaps by a uniqueness argument analogous to the proof of Theorem \ref{main2-explizit}, based on the fact that the Friedrichs self adjoint extension $L$ is 
bounded from below.
\end{rem}

\subsection{Regularity of Ricci curvature along the flow} \medskip
We continue under the previously fixed notation where the 
upper script " $\widetilde{}$ " indicates that the quantity is taken with respect to the initial metric. 
The following result proves our main Theorem \ref{main2}.

\begin{thm}\label{main2-explizit}
Let $(M,g)$ be a tangentially stable conical manifold of dimension at least $4$, with $(\a,\g,k+1)$ H\"older regular geometry.
Let $g(t), t \in [0,T]$ be the solution of the Ricci de Turck flow with initial metric $g$ and reference metric
as in Theorem \ref{existence}. Then, assuming $\gamma \in (0,2+\min \, \{\mu_0,\mu_1\})$, if $\wt{\ric} \in \cH^{k, \A}_{-2 + \gamma} (M\times [0,T], S)$, then 
$\ric \in \cH^{k, \A}_{-2 + \gamma} (M\times [0,T], S)$. 	
\end{thm}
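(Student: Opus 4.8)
The plan is to prove the statement by bootstrapping from the known a priori regularity $\ric(g(t)) \in \mathcal{C}^{k,\a}_{\textup{ie}}(M \times [0,T],S)_{-2+\ov{\g}}$ (from \eqref{Ricci-regularity}) to the sharp regularity $\ric \in \cH^{k, \A}_{-2 + \gamma}$, using the parametrices $\mathcal{Q}$ and $\mathcal{R}$ constructed for the operator $P = \p_t + a^{-1}L$ in Theorems \ref{Q neu} and \ref{R neu}. The starting point is Corollary \ref{XR}: the Ricci tensor satisfies $(\p_t + a^{-1/2}\wt{\D}\,a^{-1/2})\ric = X(\ric)$, and multiplying through by $a^{1/2}$ on the left (absorbing the resulting commutator/lower-order terms into the error, as in \eqref{DD}) one rewrites this as $(\p_t + a^{-1}L)(a^{-1/2}\ric) = $ (something with controlled regularity), so that $c := a^{-1/2}\ric$ solves a parabolic equation of precisely the type handled by $\mathcal{Q}$ and $\mathcal{R}$, with initial value $c(0) = a^{-1/2}(0)\wt{\ric} = \wt{\ric} \in \cH^{k, \A}_{-2+\gamma}(M,S)$.

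First I would set up the integral equation: write $c = \mathcal{R}(c(0)) + \mathcal{Q}(F)$, where $F$ is the inhomogeneity $a^{1/2}X(\ric)$ (plus lower-order corrections from conjugating $\wt{\D}$ by $a^{1/2}$). By Corollary \ref{XR}, $X(\ric) \in \mathcal{C}^{k-1,\a}_{\textup{ie}}(M \times [0,T],S)_{-4+2\ov{\g}}$, and a priori $2\ov{\g}$ need not exceed $\gamma$, so the inhomogeneity is only known in a weaker weighted space to begin with. The key is that the right-hand side $F$ depends on $\ric$ itself only through $X(\ric)$, which involves $\ric$ and $W$ multiplied by curvature/connection terms — all of which improve once we know $\ric$ better. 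Second, I would run the bootstrap: if $\ric \in \cH^{k, \A}_{-2+\gamma'}$ for some $\gamma' \le \gamma$, then re-examining the evolution equation \eqref{evol equ Ricci} together with the regularity \eqref{W-regularity} of $W$ shows $X(\ric) \in \mathcal{C}^{k-1,\a}_{\textup{ie}}(\cdots)_{-4+\gamma'+\ov{\g}}$ or even better, so that $\mathcal{Q}(a^{1/2}X(\ric)) \in \cH^{k, \A}_{-2+\min\{\gamma'+\ov{\g},\gamma\}}$; combined with $\mathcal{R}(\wt{\ric}) \in \cH^{k, \A}_{-2+\gamma}$ one gains weight $\ov{\g}$ per iteration until one saturates at the weight $\gamma$ imposed by the initial condition. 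One must also invoke a uniqueness statement for the parabolic problem — that the solution of $(\p_t + a^{-1}L)c = F$, $c(0) = \wt{\ric}$, in $\cH^{k,\A}_{-2+\ov{\g}}$ coincides with the one produced by $\mathcal{R}(\wt{\ric}) + \mathcal{Q}(F)$ — which follows from the boundedness from below of the Friedrichs extension of $L$ and the $L^2$-energy argument (this is exactly the uniqueness alluded to in the Remark following the time-extension subsection).

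I expect the main obstacle to be two intertwined bookkeeping issues. The first is verifying that conjugating $\wt{\D}$ by $a^{1/2}$ and passing between the operators $\wt{\D}$, $L = a^{1/2}\wt{\D}\,a^{-1/2}$, $a^{-1/2}\wt{\D}\,a^{-1/2}$, and $a^{-1}L$ genuinely only costs terms of the form $A\cdot(x^{-1}\V_b)^2 + B\cdot x^{-1}\V_b + C$ with the weights claimed in Theorem \ref{L expansion}, so that at each stage the inhomogeneity lands in the domain of $\mathcal{Q}$ with the predicted weight shift; this is where one has to be careful that no weight is lost in the conjugation (the terms $B,C$ carry weights $-1+\ov{\g}$ and $-2+\ov{\g}$ respectively, and acting on $\cH^{k,\A}_{-2+\gamma'}$ they produce output in $\cH^{k-1,\A}_{-4+\gamma'+\ov{\g}}$ and the like — this is what drives the gain). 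The second, more delicate point is checking the constraint $\gamma \in (0, 2 + \min\{\mu_0,\mu_1\})$ is exactly what is needed for \eqref{original-mapping} — hence $\mathcal{Q}$ — to be available at the final weight $\gamma$, and that the intermediate weights $-2+\gamma'$ reached during the bootstrap also stay inside the allowed window; since the dimension hypothesis $\dim M \ge 4$ (i.e. $n \ge 3$) guarantees $\Delta_{L,\min} = \Delta_{L,\max}$ and the heat operator mapping \eqref{original-mapping2}, this should go through, but the precise chain of inequalities on the weights must be tracked carefully.
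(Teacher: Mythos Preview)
Your proposal is essentially the paper's proof --- parametrices $\mathcal{Q}$, $\mathcal{R}$ applied to the evolution equation, an $L^2$-energy uniqueness argument, and a weight bootstrap --- but you are overcomplicating the first step: since $L = a^{1/2}\wt{\D}\,a^{-1/2}$ one has $a^{-1}L = a^{-1/2}\wt{\D}\,a^{-1/2}$, so the equation in Corollary~\ref{XR} already reads $(\p_t + a^{-1}L)\ric = X(\ric)$ and no conjugation by $a^{\pm 1/2}$ or passage to $c = a^{-1/2}\ric$ is needed at all. With that simplification your plan is exactly the paper's: set $\ric' := \mathcal{Q}(X(\ric)) + \mathcal{R}(\wt{\ric}) \in \cH^{k,\A}_{-2+\gamma'}$ with $\gamma' = \min\{\gamma, 2\ov{\g}\}$, show $u := \ric' - \ric \equiv 0$ via the energy identity $\p_t\|u\|_{L^2}^2 = -\|\wt{\nabla}(a^{-1/2}u)\|_{L^2}^2 \le 0$ (here $n\ge 3$ guarantees integration by parts without boundary terms), and iterate to push $\gamma'$ up to $\gamma$.
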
	

\begin{proof}
By Proposition \ref{XR} we know
\begin{align*}
		(\p_t + a^{-1} L)\ric =: X(\ric) &\in 
		\mathcal{C}^{k-1,\a}_{\textup{ie}}(M \times [0,T],S)_{-4+2\ov{\g}} 
		\\ &\subseteq \cH^{k, \A}_{-4 + 2\ov{\gamma}} (M\times [0,T], S).
\end{align*}
Consider the parametrix $\mathcal{Q}$ of Theorem \ref{Q neu} and  
the parametrix $\mathcal{R}$ of Theorem \ref{R neu}. We set $\gamma' := \min \, \{ \gamma, 2\ov{\gamma}\}$ and define 
\begin{equation} \label{R chi defi}
\ric' := \mathcal{Q}(X(\ric)) + \mathcal{R}(\wt{\ric}) \in 
\cH^{k, \A}_{-2+\gamma'}(M \times [0,T],S),
\end{equation}
which is a solution of the parabolic initial value problem
\begin{equation} \label{evol equ param 1}
(\p_t + a^{-1}L) \ric' = X(\ric), \quad  \ric(t=0) = \wt{\ric}.
\end{equation}
We define  $u:= \ric' - \ric$. 
For $n\geq 3$ (recall that $\dim M \geq 4$) we can integrate by parts without 
boundary terms and conclude (recall $L=a^{1/2} \wt{\D} \, a^{-1/2}$)
$$
\partial_t \|u\|^2_{L^2(M,g_0)} =  - ( a^{-1/2} \wt{\D} \, a^{-1/2}u, u )_{L^2(M,g_0)}  
=  - \| \wt{\nabla}  (a^{-1/2} u) \|^2_{L^2(M,g_0)} \leq 0.
$$
Since $u(t=0)\equiv 0$, we find that $u\equiv 0$. Hence we still conclude, despite having no maximum principle at hand
$$ 
\ric \equiv \ric' \in \cH^{k, \A}_{-2+\gamma'}(M \times [0,T],S). 
$$
We iterate the argument, improving the weight as long as $\g' < \g$. 
This proves the theorem. 
\end{proof}

\section{Positivity of scalar curvature along the Ricci de Turck flow}\label{intro} \medskip

We can now prove our second main result on positivity of scalar curvature along the
singular Ricci de Turck flow. At this final step, we will need a stronger tangential 
stability hypothesis. We impose, cf. Remark \ref{DL1}, the following additional

\begin{assump}\label{strong-stability}
We assume strong tangential stability: $u_0, u_1 >n$, i.e. $\square_L > n$, so that 
we may choose $\g_0, \g_1 \geq 1$ satisfying \eqref{gamma01}. 
This stronger condition is studied in the Appendix \S \ref{strong-tangential},
where a list of examples is provided. Amongst the symmetric spaces of compact type, only
		\begin{align*}
		E_8,\qquad\mathrm{E}_7/[\mathrm{SU}(8)/\left\{\pm I\right\}],\qquad\mathrm{E}_8/\mathrm{SO}(16),\qquad\mathrm{E}_8/\mathrm{E}_7\cdot \mathrm{SU}(2)
		\end{align*}
satisfy the strong tangential stability condition\footnote{We hope to lift that restriction in the forthcoming work.}.
\end{assump}

The following theorem proves our second main result, Theorem \ref{main1}.

\begin{thm}\label{final-main-thm}
Let $(M,g)$ be a strong tangentially stable conical manifold of dimension at least $4$, with $(\a,\g,k+1)$ H\"older regular geometry,
where we assume $\gamma > 3$. Let $g(t), t \in [0,T]$ be the solution of the Ricci de Turck flow with initial metric $g$ and reference metric
as in Theorem \ref{existence}. Assume that $R_{g} \ge 0$. Then $R_{g(t)} \ge 0$ for all $t \in [0,T]$. Furthermore, if $R_{g}$ is positive at 
some point in the interior $M$, then $R_{g(t)}$ is positive in the interior $M$ for all $t \in (0,T]$.
\end{thm}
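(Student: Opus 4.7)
The plan is to use Theorem \ref{main2-explizit} to upgrade the regularity of the scalar curvature $R_{g(t)}$ enough that a classical parabolic maximum principle, applied on an exhaustion of $M$ away from the conical tip, becomes available, and then to apply it to the scalar-curvature evolution from Corollary \ref{main theo}.

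First I would invoke Theorem \ref{main2-explizit}. Strong tangential stability guarantees $\mu_0,\mu_1 > 1$: indeed, $u_0 > n$ forces $\sqrt{u_0 + ((n-1)/2)^2} > (n+1)/2$, hence $\mu_0 > 1$, and likewise for $\mu_1$. So the range $3 < \gamma < 2+\min\{\mu_0,\mu_1\}$ is nonempty and admissible for Theorem \ref{main2-explizit}. This yields $\ric(g(t)) \in \cH^{k,\A}_{-2+\gamma}(M\times[0,T], S)$, and tracing gives $|R_{g(t)}| = O(x^{\gamma-2}) = O(x^{1+\epsilon})$ and $|\n R_{g(t)}| = O(x^{\epsilon})$ with $\epsilon := \gamma - 3 > 0$. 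Hence $R$ is H\"older continuous on $\ov M \times [0,T]$, extends continuously to the conical tip with value zero, and is smooth in the open interior. By Corollary \ref{main theo},
\begin{equation*}
\p_t R + \D R - \la W, \n R\ra \;=\; 2|\ric|^2 \;\geq\; 0.
\end{equation*}

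Next, apply the weak parabolic maximum principle on the exhaustion $M_\delta := \{x > \delta\}$ for small $\delta > 0$. On $\ov{M_\delta} \times [0,T]$ the operator $\p_t + \D - \la W,\n\,\cdot\,\ra$ is uniformly parabolic with bounded coefficients (by \eqref{W-regularity}, $W$ is bounded once $\ov{\g} \geq 1$), and $R$ is a continuous supersolution; hence $\inf_{\ov{M_\delta}\times[0,T]} R$ is attained on the parabolic boundary. The initial slice gives $R_g \geq 0$ by hypothesis, and on the lateral piece $\{x=\delta\}\times F \times [0,T]$ one has $R \geq -C\delta^{1+\epsilon}$. Letting $\delta \to 0^+$ the lateral contribution vanishes, yielding $R_{g(t)} \geq 0$ on $M \times [0,T]$. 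For strict positivity, assume $R_g(p_0) > 0$ at some interior $p_0$; by continuity $R_g > 0$ on an open neighborhood. Since $R \geq 0$ is a continuous nonnegative supersolution on any interior cylinder $M_\delta \times [0,T]$, the strong parabolic maximum principle (e.g.\ Protter--Weinberger, Friedman) forbids an interior zero at positive time unless $R$ vanishes identically on the preceding parabolic slice, which is excluded by $R_g > 0$ near $p_0$.

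The principal obstacle is justifying the regularity upgrade: the abstract itself emphasizes that the maximum principle fails for $R$ in general because $R$ is a priori too singular at the tip. The hypothesis $\gamma > 3$ provides the precise margin needed---$\gamma > 2$ would only give continuity and vanishing of $R$ at the tip, whereas $\gamma > 3$ additionally yields bounded $\n R$, so the lower-order term $\la W,\n R\ra$ makes sense as a bounded-coefficient perturbation and the exhaustion argument is uniform in $\delta$. Strong tangential stability (Assumption \ref{strong-stability}) is exactly what makes $\gamma > 3$ compatible with the admissible range $\gamma < 2+\min\{\mu_0,\mu_1\}$ required by Theorem \ref{main2-explizit}; without it one would be forced into the regime in which the scalar curvature is genuinely singular at the tip and a significantly more delicate energy-method argument would be required.
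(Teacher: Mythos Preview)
Your overall strategy---invoke Theorem \ref{main2-explizit} for Ricci regularity, then run a maximum principle on the scalar-curvature evolution---matches the paper's. But there is a gap at the step ``tracing gives $|R_{g(t)}| = O(x^{\gamma-2})$,'' and this is precisely the point where the paper does something more.

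The conclusion of Theorem \ref{main2-explizit} is $\ric \in \cH^{k,\A}_{-2+\gamma}(M\times[0,T],S)$, and by Definition \ref{H-space} the $S_1$-component (which carries the scalar curvature) lies in the space $\hok(M\times[0,T],S_1)^b_{-2+\gamma}$. The superscript $b$ means precisely that the section itself is only required to be bounded (in $\ho$), while its $\V_b$-derivatives decay like $O(x^{\gamma-2})$; see the remark after Definition \ref{H-space} that the $S_1$-component is \emph{not} forced into $x^{\gamma}\mathcal{C}^0_{\textup{ie}}$. So tracing gives only that $R$ is bounded and $\V_b R = O(x^{\gamma-2})$, hence $|\nabla R| = O(x^{\gamma-3})$---your gradient estimate is fine---but not that $R$ itself decays. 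Integrating the gradient bound yields $R(x,z,t) = R(0,t) + O(x^{\gamma-2})$ for some boundary value $R(0,t)$, and you have no a priori control on $R(0,t)$ for $t>0$. Your exhaustion argument then collapses: on the lateral piece $\{x=\delta\}$ you can only say $R \geq R(0,t) - C\delta^{\gamma-2}$, and knowing $R(0,t)\geq 0$ is tantamount to the conclusion you are trying to prove.

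The paper closes this gap differently. Rather than trying to show $R$ vanishes at the tip, it performs a second regularity upgrade specifically for $R$: it rewrites the scalar evolution as $(\partial_t + a^{-1}\wt{\Delta})R = P \in \mathcal{C}^{\alpha}_{\textup{ie}}$, invokes the parametrix results of \cite{BahVer} (Propositions 4.1, 4.6) to produce a solution $R'$ with $\Delta R' \in \mathcal{C}^{\alpha}_{\textup{ie}}$, and identifies $R=R'$ by an energy argument as in Theorem \ref{main2-explizit}. With this regularity in hand, the \emph{conical} maximum principle of \cite[Theorem 3.1]{BahVer} applies directly on all of $\overline{M}$ (no exhaustion needed), and the tip is handled by observing that if $R_{\min}(t)$ is attained there, then $\langle W,\nabla R\rangle \in \mathcal{C}^{\alpha}_{\textup{ie}}(M\times[0,T])_{\gamma'-3}$ vanishes at the singularity because $\gamma' > 3$. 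This is exactly where the hypothesis $\gamma > 3$ enters in the paper's argument---to kill the drift term at the tip, not to force $R$ itself to vanish there.
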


\begin{proof} 
By Theorem \ref{main2-explizit}, $\ric \in \cH^{k, \A}_{-2 + \gamma'} (M\times [0,T], S)$,
where $\gamma'$ is any weight smaller than $\{\gamma, 2+ \mu_0, 2+ \mu_1\}$. Due to 
Assumption \ref{strong-stability} of strong tangential stability, we find in 
 the evolution equation \eqref{evol equ} for the scalar curvature along the flow
\begin{equation}\label{higher order scalar}
\p_t R + \D R =  \la W, \n R \ra + 2 |\ric|^2 \in \mathcal{C}^\a_{\textup{ie}}(M\times [0,T]).
\end{equation}
We want to express the Laplace Beltrami operator $\D$ in terms of $a^{-1} \wt{\D}$,
where $\wt{\D}$ denotes the Laplace Beltrami operator with respect to $\wt{g}\equiv g(0)$. 
Consider first the Laplace Beltrami operator for $\wh{g} = a \wt{g}$,
\begin{align*}
\D_{\wh{g}} = - \frac{1}{\det \wh{g}} \, \p_i (\sqrt{\det \wh{g}} \cdot \wh{g}^{ij} \p_j) &= a^{-1} \wt{\D} - 
(\frac{m}{2} - 1) \cdot \p_i a \cdot a^{-2} \cdot \wt{g}^{ij} \p_j \\
&= a^{-1} \wt{\D} - \{x^{-1} \mathcal{V}_b a, a\} \, x^{-1} \mathcal{V}_b,
\end{align*}
where $\{x^{-1} \mathcal{V}_b a, a\} $ refers to a linear combination of monomials consisting of the terms in the brackets.
If we take into account higher order terms with $g= a \wt{g} \oplus \w$, we obtain in the same notation and higher regularity of 
$R$
\begin{equation} \label{higher order terms} \begin{split}
\D R - a^{-1} \wt{\D} R &= \{ x^{-1} \mathcal{V}_b a , a\} \, x^{-1} \mathcal{V}_b R + 
\{ x^{-1} \mathcal{V}_b \w , x^{-1}\w,\w\} \, x^{-1} \mathcal{V}_b R \\ &+ \{a, \w\} \, x^{-1} \mathcal{V}_b^2 \, R
\in \mathcal{C}^\a_{\textup{ie}}(M\times [0,T]).
\end{split}\end{equation}
The relation above is similar to Theorem \ref{L expansion}, where we 
note that the Lichnerowicz Laplacian acting on the pure trace component 
$S_1$ coincides with the Laplace Beltrami operator. Combining \eqref{higher order scalar}
and \eqref{higher order terms}, we conclude 
\begin{equation}\label{R-regularity-eqn}\begin{split}
\p_t  R + a^{-1} \wt{\D} R = P \in \mathcal{C}^\a_{\textup{ie}}(M\times [0,T]).
\end{split}\end{equation}
By \cite[Proposition 4.1, 4.6]{BahVer} there exists a solution $R' \in \mathcal{C}^\a_{\textup{ie}}(M\times [0,T])$ of \eqref{R-regularity-eqn}, 
for a given initial value $R(0) \in \mathcal{C}^\a_{\textup{ie}}(M)$, such that $\D R' \in \mathcal{C}^\a_{\textup{ie}}(M\times [0,T])$ again.
Exactly as in Theorem \ref{main2-explizit}, we find $R=R'$. Hence the maximum principle obtained in \cite[Theorem 3.1]{BahVer} 
applies to $R$ and, denoting by $R_{\min}(t)$ the minimum of the scalar curvature at time $t$, which is attained at
$p_{\min}(t) \in \ov{M}$, we conclude
\begin{align*}
\p_t R_{\min}(t) &\geq  \la W(p_{\min}(t)), \n R_{\min}(t) \ra + 2 |\ric(p_{\min}(t))|^2 \\
&\geq  \la W(p_{\min}(t)), \n R_{\min}(t) \ra 
\end{align*}
Now, if $p_{\min}(t)$ lies in the open interior of $M$, then $\n R_{\min}(t) = 0$.
If $p_{\min}(t)$ lies at the conical singularity, we argue as follows: note that $\la W, \n R \ra \in \mathcal{C}^\a_{\textup{ie}}(M\times [0,T])_{\gamma'-3}$.
Since by strong tangential stability, we can choose $\gamma' > 3$, we find that $\la W, \n R \ra$ is vanishing at the conical singularity and hence 
\begin{align*}
\p_t R_{\min}(t) \geq  \la W(p_{\min}(t)), \n R_{\min}(t) \ra = 0.
\end{align*}
This implies that $R \ge 0$ for all $t \in [0,T]$. 
Now the last statement follows from the strong maximum principle \cite[Theorem A.5]{kleilo}. 
\end{proof}

\section*{Appendix: Characterizing the spectral conditions}\label{strong-tangential}

In this appendix, we aim to characterize strong tangential stability
in 
terms of eigenvalues of geometric operators on the cross-section of a cone. Note that
this condition is only used in the last part of our argument, proving Theorem \ref{main2}.
It is not used in our first main result Theorem \ref{main1} on higher regularity of Ricci curvature.
\medskip
	
Note that the operator $\square_L$ can be entirely be described as an operator on the cross 
section $(F^n,g_F)$ of the cone. Therefore in this section, all scalar products and geometric 
operators are taken with respect to $(F^n,g_F)$. The operator 
 $\Delta_E$ denotes the Einstein operator on symmetric two-tensors over $F$.
It is given by \[ 
\D_E\w_{ij} = \D\w_{ij} - 2 g^{pq} \Rm^r_{qij} \w_{rp} ,
\]
where $\Delta$ is 
the rough Laplacian. We write $\Delta$ for the Laplace Beltrami operator on $F$. 
Moreover, $TT$ denotes the space of symmetric two-tensors which are trace-free and 
divergence-free at each point.
	\begin{thm}\label{tang_op_estimate}
		Let $(F^n,g_F)$, $n\geq 3$ be a compact Einstein manifold with constant $n-1$. 
		Then $(F,g_F)$ is strongly tangentially stable if and only if we have
		the conditions
		\begin{align*}
		\mathrm{Spec}(\Delta_E|_{TT})&>n,\\
		\mathrm{Spec}(\Delta_{\Omega^1(F)\cap\ker(div)})&>n+\sqrt{n^2+2n+1},
		\end{align*}
		and if for all positive eigenvalues $\lambda$ of the Laplace-Beltrami operator on functions satisfies
		\begin{align*}
		& \quad n(\lambda-3n+2)(\lambda+4-n)n(\lambda+n+2)\\& \quad -8n(n-1)(\lambda-n)(\lambda+n+2)-8\lambda n(n+1)(\lambda-3n+2)>0.
		\end{align*}
	\end{thm}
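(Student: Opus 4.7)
The plan is to adapt the orthogonal-decomposition argument employed by the first and third authors in the characterization of ordinary tangential stability recalled in Section~4, modifying it to track the strict lower bound $\square_L > n$ in place of $\square_L \ge 0$. Recall that any symmetric $2$-tensor on the cone splits, under the decomposition introduced in Section~4, as a triple
$\omega \longleftrightarrow \bigl(\omega(\p_x,\p_x),\, \omega(\p_x,\cdot),\, \omega(\cdot,\cdot)\bigr) \in C^\infty(F) \oplus \Omega^1(F) \oplus \Sym^2(T^*F)$.
I would further refine this splitting by applying the Hodge decomposition $\omega(\p_x,\cdot) = d\phi + \beta$ with $\delta\beta = 0$ on $F$, together with the Koiso-type $L^2$-orthogonal decomposition $\omega(\cdot,\cdot) = h^{TT} + \mathcal{L}_Y g_F + f\, g_F$ with $Y$ a divergence-free vector field. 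A direct computation shows that $\square_L$ becomes block-diagonal with respect to the refined splitting, decoupling into a TT block, a $2 \times 2$ divergence-free block coupling $\beta$ with $Y$, and a scalar block coupling $(\omega(\p_x,\p_x), \phi, f)$; strong tangential stability can then be analysed block by block.

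On the TT block, the formula for $\square_L$ reduces, up to an additive constant depending on the Einstein constant $n-1$, to the Einstein operator $\Delta_E$ on TT tensors, so the bound $\square_L > n$ translates directly into $\mathrm{Spec}(\Delta_E|_{TT}) > n$. On the divergence-free block, $\square_L$ restricts to each simultaneous eigenspace of the rough Laplacian on $\Omega^1(F)\cap \ker\delta$ as a symmetric $2 \times 2$ matrix whose entries are affine in the eigenvalue $\mu$; requiring that both of its eigenvalues exceed $n$ reduces, after computing trace and determinant, to the single threshold $\mu > n + \sqrt{n^2+2n+1}$.

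The principal effort lies in the scalar block. For each positive Laplace eigenvalue $\lambda$ with eigenfunction $\phi_\lambda$, the restriction of $\square_L$ to the span of the associated triples is a symmetric $3 \times 3$ matrix $M(\lambda)$. The pure-trace direction in this block lies in the $S_1$-summand and contributes just the operator $\Delta_F$, whose bound $\lambda > n$ is automatic once one restricts to positive eigenvalues and captures the $u_1 > n$ portion of strong tangential stability. Projecting $M(\lambda)$ onto the complementary trace-free two-plane yields a symmetric $2 \times 2$ matrix $\widetilde{M}(\lambda)$, and $\square_L > n$ on the trace-free component is equivalent to $\trace\bigl(\widetilde{M}(\lambda) - n\, \identity\bigr) > 0$ together with $\det\bigl(\widetilde{M}(\lambda) - n\, \identity\bigr) > 0$. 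Using the Bochner identity $\Delta\, d\phi_\lambda = (\lambda + n - 1)\, d\phi_\lambda$ from the Einstein condition, together with the standard formula for $\square_L$ acting on $\phi_\lambda g_F$, both inequalities collapse into the displayed cubic polynomial inequality in $\lambda$.

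The main obstacle is precisely this last computation: one must choose an $L^2(F)$-orthonormal basis of the $(\omega(\p_x,\p_x), d\phi_\lambda, \phi_\lambda g_F)$-span with the correct $\lambda$-dependent normalizations, carefully track the coupling terms arising from $\nabla d\phi_\lambda$, the Koiso-type Lie-derivative piece, and the $(n-1)$-shifts coming from the Einstein condition, and then verify by expansion of $\trace$ and $\det$ that the two inequalities are equivalent to the displayed cubic inequality. Reversing the argument block by block then shows that the three listed spectral conditions are also sufficient for strong tangential stability, completing the equivalence.
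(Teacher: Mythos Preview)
Your overall strategy of passing to the $\square_L$-invariant decomposition and analysing block by block is exactly what the paper does, and the TT and divergence-free blocks are handled in the same way. The gap is in the scalar block.

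Your Koiso-type splitting $\omega(\cdot,\cdot)=h^{TT}+\mathcal{L}_Y g_F+f\,g_F$ with $Y$ \emph{divergence-free} does not exhaust $\Sym^2(T^*F)$: it omits the Hessian piece $\nabla^2\phi=\tfrac12\mathcal{L}_{\nabla\phi}g_F$ coming from gradient vector fields. Consequently, for each Laplace eigenfunction $v_i$ with eigenvalue $\lambda_i>0$ the trace-free scalar block is not two-dimensional but three-dimensional; in the paper's notation it is
\[
V_{4,i}=\langle r^2(n\nabla^2 v_i+\Delta v_i\,g_F)\rangle\oplus\langle dr\odot r\nabla v_i\rangle\oplus\langle v_i(r^2 g_F-n\,dr\otimes dr)\rangle,
\]
with $V_{2,i}=\langle v_i(r^2 g_F+n\,dr\otimes dr)\rangle$ the separate pure-trace line. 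So the matrix you must test for positivity of $\square_L-n\cdot\mathrm{id}$ is $3\times3$, not $2\times2$. This is not a cosmetic issue: a $2\times2$ determinant would produce a condition at most quadratic in $\lambda$, whereas the displayed inequality is genuinely cubic (leading term $n^2\lambda^3$), and it is precisely the determinant of the full $3\times3$ matrix. The paper then still has to check that the two lower principal minors are automatically positive once $\lambda\ge n$ (using Obata's estimate), so that the cubic determinant condition is the only surviving restriction; this step is also absent from your outline. Restore the Hessian direction in the scalar block and rerun the Sylvester criterion on the resulting $3\times3$ matrix, and your plan goes through.
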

	\begin{proof}
		We first recall from the discussion before Remark \ref{DL1} that strong tangential stability is equvalent to the two estimates 
		\begin{align*}
	u_0=\min(\Spec \, \square'_L \backslash\{0\})>n,\qquad u_1= \min(\Spec \, \D_F \backslash\{0\}) >n.
		\end{align*}
		However, the condition $u_1>n$ holds for any Einstein metric except the sphere, where we have equality \cite{Obata}.		
For the rest of the proof, it thus suffices to consider the bundle $S_0 \restriction F$. 
We use the same methodology as in \cite{Klaus-Vertman} 
which builds up on a decomposition of symmetric $2$-tensors established in \cite{Klaus}. 
We use the notation in \cite[Section 2]{Klaus} and and the calculations in Section 3 of the 
same paper where  we  remove  all  terms  containing  radial derivatives  in order  to  obtain expressions for the tangential operator. 
More precisely, we write
		\begin{equation}
		\begin{split}
		\{h_i\} \ &\textup{- basis of} \ L^2(TT), \quad \Delta_E h_i = \kappa_i h_i, \quad V_{1,i} := \langle r^2 h_i \rangle, \\
		\{\omega_i\} \ &\textup{- basis of coclosed sections} \ L^2(T^*F), \quad \Delta \omega_i = \mu_i \omega_i, \\
		&\quad V_{3,i} :=  \langle r^2 \delta^* \omega_i\rangle \oplus \langle dr \odot r \omega_i\rangle, \\
		\{v_i\} \ &\textup{- basis of} \ L^2(F), \quad \Delta v_i = \lambda_i v_i, \\
		&\quad V_{4,i} := \langle r^2 (n \nabla^2 v_i + \Delta v_i g)\rangle \oplus \langle dr \odot r \nabla v_i\rangle
		\\ & \quad \oplus \langle v_i (r^2 g - ndr \otimes dr)\rangle.
		\end{split}
		\end{equation}
		Here, $\Delta$ in $\Delta \omega_i$ denotes the connection Laplacian, while $\Delta$ in $\Delta v_i$
		denotes the Laplace Beltrami operator.
		The spaces $V_{1,i}, V_{3,i}, V_{4,i}$, with $L^2(0,1)$ coefficients, span all trace-free sections $L^2(S_0 \restriction F)$ over $F$,
		and are invariant under the action of the Lichnerowicz Laplacian. In \cite[Section 2]{Klaus}, there is also a notion for the spaces $V_{2,i}:=\langle v_i (r^2 g + ndr \otimes dr)\rangle$. But these spaces span the full trace secions $L^2(S_1 \restriction F)$, whose discussion is not relevant here.
		At first, if $\wt{h} = r^2 h_i \in V_{1,i}$,
		\begin{align*}
		(\square_L\wt{h},\wt{h})_{L^2}=\kappa_i\Vert \wt{h} \Vert_{L^2}
		\end{align*}
		such that $\square_L>n$ on $V_{1,i}$ for all $i$ if and only if $\kappa_i>n$ 
		for all eigenvalues of the Einstein operator on $TT$-tensors are positive (non-negative).\medskip
		
		Let now $\wt{h} \in V_{3,i}$ so that it is of the form
		$\wt{h} = \wt{h}_1 + \wt{h}_2 = \varphi r^2 \delta^* \omega_i + \psi dr \odot r \omega_i$. In this case, we have the scalar products
		\begin{align*}	(\square_L \wt{h}_1, \wt{h}_1)_{L^2} &= \frac{\varphi^2}{2} (\mu_i - (n-1))^2,\\
		(\square_L \wt{h}_2, \wt{h}_2)_{L^2}& = \psi^2 [2\mu_i + (2n+6)],\\
		(\square_L \wt{h}_1, \wt{h}_2)_{L^2}& = -2 (\mu_i - (n-1)) \psi \varphi.
		\end{align*}
		Taking $r^2 \delta^* \omega_i$ and $dr \odot r \omega_i$ as a basis, $\square_L$ respects the subspace and acts as $2 \times 2$-matrix
		\begin{equation*}
		\left( \begin{array}{cc}
		\frac{1}{2} (\mu_i - (n-1))^2 & -2(\mu_i - (n-1))\\
		-2(\mu_i - (n-1)) & 2 \mu_i + (2n + 6)
		\end{array} \right).
		\end{equation*}
		Because
		\begin{equation*}
		\begin{split}
		\Vert \wt{h}_1 \Vert_{L^2}^2 & = \frac{1}{2} (\mu_i - (n-1)) \cdot \vert \varphi \vert^2,\qquad
		\Vert \wt{h}_2 \Vert_{L^2}^2  = 2 \vert \psi \vert^2,
		\end{split}
		\end{equation*}
		the operator $\square_L-n\cdot {id}>0$ is represented by
		\begin{equation*}
		A:=\left( \begin{array}{cc}
		\frac{1}{2} (\mu_i - (n-1))^2 -\frac{n}{2} (\mu_i - (n-1)) & -2(\mu_i - (n-1))\\
		-2(\mu_i - (n-1)) & 2 \mu_i + 6
		\end{array} \right)	.
		\end{equation*}
		The matrix $A$ is positive definite if and only if the matrix
		\begin{equation*}
		B:=\left( \begin{array}{cc}
		\frac{1}{2} (\mu_i - (2n-1)) & -2(\mu_i - (n-1))\\
		-2 & 2 \mu_i + 6
		\end{array} \right)
		\end{equation*}
		is positive definite
		because $A$ is obtained from $B$ by multiplying the first column by $\mu_i - (n-1)$.
		By computing principal minors, this holds if 
		\begin{align*}
		\det(B)=\frac{1}{2} (\mu_i - (2n-1)) (2 \mu_i + 6)-4(\mu_i - (n-1))>0.
		\end{align*}
		This in turn holds if
		\begin{align*}
		\mu_i>n+\sqrt{n^2+2n+1}.
		\end{align*}
		Therefore, $\square_L>n$  on the spaces $ V_{3,i}$ if and only if $\Delta>n+\sqrt{n^2+2n+1}$ on coclosed sections $L^2(T^*F)$.
		It remains to consider the case $\wt{h}\in V_{4,i}$, so that it is of the form
		\begin{align*}\wt{h} = \wt{h}_1 + \wt{h}_2 + \wt{h}_3 = \varphi r^2 (n \nabla^2 v_i + \Delta v_i g) + \psi dr \odot r \nabla v_i + \mathcal{X} v_i (r^2 g - ndr \otimes dr).
		\end{align*}
		This case is the most delicate one. We have the scalar products	\begin{equation*}
		\begin{split}
		(\square_L \wt{h}_1,\wt{h}_1) & = n(n-1) \lambda_i (\lambda_i-n) (\lambda_i - 2(n-1)) \varphi^2,\\
		(\square_L \wt{h}_2,\wt{h}_2) & = [2\lambda_i (\lambda_i - (n-1)) + (2n+6) \lambda_i] \psi^2,\\
		(\square_L \wt{h}_3,\wt{h}_3) & = [n\{(n+1) \lambda_i - 2(n+1) \} + 2n^2(n+3)] \mathcal{X}^2,\\
		(\square_L \wt{h}_1,\wt{h}_2) & = -4(n-1) \lambda_i (\lambda_i -n) \psi \varphi,\\
		(\square_L \wt{h}_2,\wt{h}_3) & = 4(n+1) \lambda_i \psi \mathcal{X},\\
		(\square_L \wt{h}_1,\wt{h}_3) & = 0
		\end{split}
		\end{equation*}
		and the norms
		\begin{equation*}
		\begin{split}
		\Vert \wt{h}_1 \Vert_{L^2}^2 & = n(n-1) \lambda_i (\lambda_i - n) \varphi^2,\\
		\Vert \wt{h}_2 \Vert_{L^2}^2 & = 2 \varphi^2 \lambda_i,\\
		\Vert \wt{h}_3 \Vert_{L^2}^2 & = (n+1)n.
		\end{split}
		\end{equation*}
		Consider $(\square_L - n\cdot id)$. It acts as a matrix $A=(a_{ij})_{1\leq n\leq 3}$,
		whose coefficients are given by
		\begin{align*}
		a_{11}&=n(n-1)\lambda_i (\lambda_i - n)[\lambda_i -2(n-1)-n],\\
		a_{22}&=2 \lambda_i [\lambda_i - (n-1)  + 3],\\
		a_{33}&= n\{(n+1) \lambda_i - 2(n-1) - n(n+1) + 2n(n+3) \},\\
		a_{12}&=a_{21}=-4(n-1)\lambda_i (\lambda_i -n),\\
		a_{23}&=a_{32}=4(n+1)\lambda_i,\\
		a_{13}&=a_{31}=0.
		\end{align*}
		In order to prove positivity of this matrix, we consider its principal minors 
		$A_{33}$ (which is the lower right entry), $A_{23}$ (the lower right $2\times 2$-matrix) and $A$ (the whole matrix).
		At first,
		\begin{equation*}
		\begin{split}
		A_{33} & = n \{(n+1) \lambda_i + n^2 +3n+2 \} > 0.
		\end{split}
		\end{equation*}
		Observe that in the case $\lambda_i=0$, $\wt{h}_1 \equiv 0$ and $ \wt{h}_2 \equiv 0$, so that $V_{4i} = span\{\wt{h}_3\}$ and
		hence, $(\square_L-n\cdot id)$ acts as $A_{33} > 0$. Therefore, we may from now on assume 
		that $\lambda_i > 0$, which means that actually $\lambda_i \ge n$ (due to eigenvalue estimates 
		for Einstein manifolds, see e.g. \cite{Obata}) with $\lambda_i = n$ only for $\mathbb{S}^n$. By considering the matrix
		\begin{equation*}
		\left( \begin{array}{cc}
		2(\lambda_i + 4 - n) & 4(n+1)\lambda_i\\
		4(n+1) & n\{(n+1) \lambda_i  + n^2 +3n+2\}
		\end{array} \right),
		\end{equation*}
		from which one recovers $A_{23}$ by multiplying the first column by $\lambda_i$,
		we see that
		\begin{equation*}
		\begin{split}
		\frac{\det A_{23}}{\lambda_i} & = 2(\lambda_i + 4 - \epsilon)n \cdot [(n+1)\lambda_i + n^2 +3n+2] - 16(n+1)^2 \lambda_i\\
		& = 2n(n+1)\lambda_i^2-4(n+1)(n+4)\lambda_i-2n(n-4)(n^2+3n+2),
		\end{split}
		\end{equation*}
		which is positive if
		\begin{align*}
		\lambda_i>\frac{n+4}{n}+\sqrt{\frac{n+4}{n}+(n-4)(n+2)}.
		\end{align*}
		Here, the right hand side is smaller than $n$ if $n\geq4$ such that this condition holds anyway. 
		Before we compute the full determinant of $A$, we remark that in the case $\lambda_i=n$, the tensor $\wt{h}_1$ 
		is vanishing so that in this case, the matrix $A$ describing $\square_L$ on $V_{4,i}$ reduces to the matrix 
		$A_{23}$ which just has been considered. Therefore, there is nothing more to prove in this case and we may 
		assume $\lambda=\lambda_i>n$ from now on.
		To compute the full determinant of $A$, we first consider the matrix
		\begin{align*}
		\left( \begin{array}{ccc}
		n[\lambda - 2(n-1) - n] & -2(n-1)(\lambda - n) & 0\\
		-4 & \lambda-(n-1)+3 & 4 \lambda\\
		0& 2(n+1) & n\{\lambda + 2(n+1) -n  \}
		\end{array} \right)
		\end{align*}
		from which we recover $A$ by mutiplying the three columns by $(n-1)\lambda(\lambda-n)$ and $2\lambda,(n+1)$, respectively.
		We get
		\begin{equation*}
		\begin{split}
		[(n-1) \lambda (\lambda -n) 2 \cdot &\lambda \cdot (n+1)]^{-1} \det A
		= n(\lambda-3n+2)(\lambda+4-n)n(\lambda+n+2)\\ &-8n(n-1)(\lambda-n)(\lambda+n+2)-8\lambda n(n+1)(\lambda-3n+2)
		\end{split}
		\end{equation*}
		which finishes the proof.
	\end{proof}
	\begin{thm}
		Amongst the symmetric spaces of compact type, only
		\begin{align*}
		E_8,\qquad\mathrm{E}_7/[\mathrm{SU}(8)/\left\{\pm I\right\}],\qquad\mathrm{E}_8/\mathrm{SO}(16),\qquad\mathrm{E}_8/\mathrm{E}_7\cdot \mathrm{SU}(2)
		\end{align*}
		are strongly tangentially stable.\end{thm}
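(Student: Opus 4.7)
The plan is to apply the characterization just established in Theorem \ref{tang_op_estimate}, which reduces strong tangential stability of a compact Einstein manifold $(F^n,g_F)$ of Einstein constant $n-1$ to three explicit spectral inequalities:
\begin{align*}
&\mathrm{(i)}\ \min\mathrm{Spec}(\Delta_E|_{TT})>n,\qquad \mathrm{(ii)}\ \min\mathrm{Spec}(\Delta|_{\Omega^1\cap\ker\mathrm{div}})>n+\sqrt{n^2+2n+1},\\
&\mathrm{(iii)}\ n(\lambda-3n+2)(\lambda+4-n)(\lambda+n+2)-8(n-1)(\lambda-n)(\lambda+n+2)-8\lambda(n+1)(\lambda-3n+2)>0
\end{align*}
for every positive eigenvalue $\lambda$ of the Laplace--Beltrami operator on functions. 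Since strong tangential stability is a strengthening of tangential stability, I would begin with the list of tangentially stable symmetric spaces of compact type already tabulated in \cite{Klaus-Vertman} (and recalled in the theorem before Remark \ref{DL1}); any space not appearing there can be discarded immediately.

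For each surviving candidate $F=G$ or $F=G/K$, the smallest Laplace eigenvalues for the three operators are reduced, via the Peter--Weyl theorem and Frobenius reciprocity, to Casimir eigenvalues of irreducible $G$-representations whose restriction to $K$ contains the $K$-representation on the appropriate fibre: the trivial $K$-type for (iii), the cotangent $K$-type for (ii), and the traceless-symmetric-square $K$-type for (i). These Casimir values are computed from Freudenthal's formula $\langle\mu,\mu+2\rho\rangle$ in terms of the highest weight $\mu$. Once normalised so that the Einstein constant is $n-1$, the minima $u_0=\min(\mathrm{Spec}\,\square_L'\setminus\{0\})$ and $u_1=\min(\mathrm{Spec}\,\Delta_F\setminus\{0\})$ become explicit numbers which can be compared to $n$.

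Many candidates on the \cite{Klaus-Vertman} list fail almost immediately. Obata's theorem forces $u_1\geq n$ with equality only for $\mathbb S^n$, but a case-by-case inspection of the lowest nontrivial Casimir on the remaining spaces shows that, for most of them, the polynomial in (iii) evaluated at $\lambda=u_1$ is non-positive, or the first Casimir relevant for (ii) is below $n+\sqrt{n^2+2n+1}$. This eliminates $\mathrm{Spin}(p)$ for $p\neq 7,8$, the exceptional groups $\mathrm{E}_6,\mathrm{E}_7,\mathrm F_4$, the real Grassmannian families, and almost all of the exceptional cosets of rank one. What survives are precisely $E_8$, $\mathrm E_7/[\mathrm{SU}(8)/\{\pm I\}]$, $\mathrm E_8/\mathrm{SO}(16)$, and $\mathrm E_8/\mathrm E_7\cdot\mathrm{SU}(2)$: in these the dimension is high and the root data force the minimal Casimir eigenvalues sufficiently large that (i)--(iii) all hold strictly, which is verified by a direct (finite) numerical check.

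The main obstacle is the combinatorial burden: one must, uniformly across the classification, assemble the root/weight data and the $K$-type decompositions of $T^*(G/K)$ and $\mathrm{Sym}^2_0T^*(G/K)$, then compute minima of $\langle\mu,\mu+2\rho\rangle$ over explicit finite sets of dominant weights. No single case is deep, but the bookkeeping is heavy; in practice, I would organise the check around the \emph{harmonic analysis tables} of Ikeda--Taniguchi and of Cahen--Franc and cross-reference them with the tangential-stability tables from \cite{Klaus-Vertman}, which already contain most of the raw eigenvalue data, and then simply plug the numbers into inequalities (i)--(iii).
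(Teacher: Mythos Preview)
Your approach is essentially the same as the paper's: both reduce to the three spectral inequalities of Theorem \ref{tang_op_estimate} and then verify them case-by-case across the classification of symmetric spaces of compact type. The paper, however, organises the check more economically. Rather than invoking Peter--Weyl, Frobenius reciprocity and Freudenthal's formula from scratch (or the Ikeda--Taniguchi and Cahen--Franc tables), it pulls the relevant spectral data directly from the tables in \cite{Klaus} and \cite{CaoHe}, recording for each space just two normalised quantities: $\Lambda$ (the smallest nonzero Laplace--Beltrami eigenvalue divided by $n-1$) and $\Theta$ (the smallest Lichnerowicz eigenvalue on symmetric $2$-tensors of zero mean trace, divided by $n-1$). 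Using the identities $\Delta_L=\Delta_E+2(n-1)\mathrm{id}$ and $\Delta_H=\Delta+(n-1)\mathrm{id}$ together with the intertwining $\delta^*\circ\Delta_H=\Delta_L\circ\delta^*$, the paper observes that both conditions (i) and (ii) follow from a single numerical bound on $\Theta$, and that in fact $\Theta\le 3$ already rules out almost every candidate; only $\mathrm{E}_7/\mathrm{SO}(12)\cdot\mathrm{SU}(2)$ requires a separate check on $\Lambda$. This bypasses your preliminary filtering through the tangentially-stable list and the separate treatment of $TT$-tensors versus coclosed $1$-forms. Your plan would work, but it duplicates bookkeeping that is already condensed into the $(\Lambda,\Theta)$ tables.
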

	\begin{proof}
		We merge and analyse the Tables $2$ and $3$ in \cite{Klaus} and the Tables $1$ and $2$ in \cite{CaoHe}.
		In the tables below, $\Lambda$ denotes the smallest nonzero eigenvalue of the Laplace-Beltrami operator divided by the Einstein constant $n-1$ and $\Theta$ denotes the smallest eigenvalue of the Lichnerowicz Laplacian $\Delta_L$ on symmetric $2$-tensors with $\int_F tr h\text{ }dV=0$ divided by the same  constant $n-1$. \medskip
		
		To check that the above mentioned spaces satisfy the condition of Theorem \ref{tang_op_estimate}, we check on one hand that the estimate for $\lambda$ in this theorem holds for $(n-1)\cdot \Lambda$. On the other hand, we have the relations $\Delta_L=\Delta_E+2(n-1)\mathrm{id}$, $\delta^*\circ \Delta_H=\Delta_L\circ \delta^*$ and $\Delta_H=\Delta+(n-1)\mathrm{id}$. Therefore, the condition $\mathrm{Spec}(\Delta_E|_{TT})>n$ holds, if $(n-1)\Theta-2(n-1)>n$ and the condition $\mathrm{Spec}(\Delta_{\Omega^1(F)\cap\ker(div)})>n+\sqrt{n^2+2n+1}$ holds if $(n-1)\Theta>2n-1+\sqrt{n^2+2n+1}$. \medskip
		
		To show that all the other examples do not satisfy $\square_L>n$, we proceed as follows: We check that $(n-1)\Lambda$ satisfies  \begin{align*}
		\Lambda\leq \frac{n+4}{n}+\sqrt{\frac{n+4}{n}+(n-4)(n+2)}
		\end{align*}
		or
		\begin{align*}
		&n(\Lambda-3n+2)(\Lambda+4-n)n(\Lambda+n+2)\\&-8n(n-1)(\Lambda-n)(\Lambda+n+2)-8\Lambda n(n+1)(\Lambda-3n+2)\leq0.
		\end{align*}
		This holds for example, if $\Lambda\leq 3$. Because $\Delta_L(f\cdot g_F)=\Delta f\cdot g_F$ and $\Delta_L\circ \nabla^2=\nabla^2\circ \Delta$, we clearly have $\Theta\leq\Lambda$.
Because of the decomposition
\begin{align*}
C^{\infty}(\textup{Sym}^2(T^*F))=C^{\infty}(F)\cdot g_F\oplus \nabla^2(C^{\infty}(F))\oplus \delta^*(\Omega^1(F)\cap \ker(div))\oplus TT,
\end{align*}
(c.f. \cite{Klaus})
$\Theta$ is attained on
\begin{align*}
\delta^*(\Omega^1(F)\cap \ker(div))\oplus TT
\end{align*}
if $\Theta<\Lambda$. In this case, we check if $\Theta$ satisfies $(n-1)\Theta-2(n-1)\leq n$ and $(n-1)\Theta<2n-1+\sqrt{n^2+2n+1}$ (both conditions are satisfied if $\Theta\leq 3$). Due to the relations above, this implies that either $\mathrm{Spec}(\Delta_E|_{TT})>n$ or $\mathrm{Spec}(\Delta_{\Omega^1(F)\cap\ker(div)})>n+\sqrt{n^2+2n+1}$ fails so that $\square_L>n$ fails. \medskip

Therefore, the condition $\mathrm{Spec}(\Delta_E|_{TT})>n$ holds, if $(n-1)\Theta-2(n-1)>n$ and the condition $\mathrm{Spec}(\Delta_{\Omega^1(F)\cap\ker(div)})>n+\sqrt{n^2+2n+1}$ holds if $(n-1)\Theta>2n-1+\sqrt{n^2+2n+1}$. \medskip

To finish the proof, one just has to go through all the values of $\Theta$ and $\Lambda$ in the table below. 
Strong tangential stability is abbreviated by STS.
As was already said, for the spaces mentioned in the statement of the theorem, one manually checks the estimates above to verify that the conditions of Theorem \ref{tang_op_estimate} are satisfied. In all the other cases, one finds  $\Theta\leq3$ except in the case  $\mathrm{E}_7/\mathrm{SO}(12)\cdot\mathrm{SU}(2)$ where one manually checks the condition on $\Lambda$ mentioned above. This finishes the proof of the theorem.
		
		\begin{center}
			\renewcommand{\arraystretch}{1.5}
			\begin{longtable}{|l|l|c|c|c|l|}
				\hline
				type & $\mathrm{G}$ & $\dimn(\mathrm{G})$ & $\Lambda$ & $\Theta$ & STS \\
				\hline
				$\mathrm{A}_p$	& $\mathrm{SU}(p+1)$, $p\geq 2$			& $p^2-1$			
				& $\frac{2p(p+2)}{(p+1)^2}$ & $\frac{2p(p+2)}{(p+1)^2}$  & no\\
				\hline
				\multirow{3}{*}{$\mathrm{B}_n$}
				& $\mathrm{Spin}(5)	$		& $10$		& $\frac{5}{3}$ & $\frac{4}{3}$ & no\\
				& $\mathrm{Spin}(7)	$		& $21$	& $\frac{21}{10}$ & $\frac{12}{5}$& no \\
				& $\mathrm{Spin}(2p+1)$, $n\geq 4$		& $2p(p+1)$		& $\frac{4p}{2p-1}$ & $\frac{4p}{2p-1}$ & no\\
				\hline
				$\mathrm{C}_p$	& $\mathrm{Sp}(p)$, $p\geq 3$			& $p(2p+1)$		& $\frac{2p+1}{p+1}$ & $\frac{4p-1}{2(p+1)}$ & no\\
				\hline
				$\mathrm{D}_p$	& $\mathrm{Spin}(2p)$, $p\geq 3$			& $p(2p+1)$		& $\frac{2p-1}{p-1}$ & $\frac{2p-1}{p-1}$ & no\\
				\hline
				$\mathrm{E}_6$	& $\mathrm{E}_6$			& $156$		& $\frac{26}{9}$ & $\frac{17}{6}$ & no\\
				\hline
				$\mathrm{E}_7$	& $\mathrm{E}_7$			& $266$		& $\frac{19}{6}$ & $3$ & no\\
				\hline
				$\mathrm{E}_8$	& $\mathrm{E}_8$			& $496$		& $4$ & $\frac{47}{15}$ & yes\\
				\hline
				$\mathrm{F}_4$	& $\mathrm{F}_4$			& $52$		& $\frac{8}{3}$ & $\frac{8}{3}$ & no\\
				\hline
				$\mathrm{G}_2$	& $\mathrm{G}_2$			& $14$		& $2$ & $2$ & no\\
				\hline	
				\caption{Conditions of Theorem \ref{tang_op_estimate} for simple Lie groups}
			\end{longtable}
		\end{center}
		In the case of irreducible rank-$1$ symmetric spaces of compact type, an analogous argumentation yields the following table:
		\begin{center}
			\renewcommand{\arraystretch}{1.5}
			\begin{longtable}{|l|l|c|c|c|l|}
				\hline
				type & $G/K$ & $\dimn(G/K)$ & $\Lambda$ & $\Theta$ & STS \\
				\hline	
				\multirow{2}{*}{A I}	& $\mathrm{SU}(p)/\mathrm{SO}(p)$, $5\geq p\geq 3$			
				& $\frac{(p-1)(p+2)}{2}$			& $\frac{2(p-1)(p+2)}{p^2}$ &  $2$ &  no\\
				& $\mathrm{SU}(p)/\mathrm{SO}(p)$, $p\geq 6$			
				& $\frac{(p-1)(p+2)}{2}$			& $\frac{2(p-1)(p+2)}{p^2}$ &  $2$ &  no\\
				\hline
				\multirow{2}{*}{A II}
				& $\mathrm{SU}(4)/\mathrm{Sp}(2)=S^5	$		& $5$		& $\frac{5}{4}$ & $3$ & no \\
				& $\mathrm{SU}(2p)/\mathrm{Sp}(p)$, $p\geq3	$		& $2p^2-p-1$	& $\frac{(2p+1)(p-1)}{p^2}$ & $2$ & no \\
				\hline
				\multirow{2}{*}{A III}
				& $\frac{\mathrm{U}(p+1)}{\mathrm{U}(p)\times \mathrm{U}(1)}=\CP^p	$		& $2p$		& $2$ & $2$ & no\\
				& $\frac{\mathrm{U}(p+q)}{\mathrm{U}(q)\times \mathrm{U}(p)}$, $q\geq p\geq2	$		
				& $2pq$	& $2$ &  $2$ & no \\
							\hline
				\multirow{6}{*}{B I}
				& $\frac{\mathrm{SO}(5)}{\mathrm{SO}(3)\times SO(2)}$		& $6$	& $2$ &   $\frac{4}{3}$ & no \\		
				& $\frac{\mathrm{SO}(2p+3)}{\mathrm{SO}(2p+1)\times \mathrm{SO}(2)}$, $p\geq 2$		& $4p+2$	& $2$ &   $\frac{8}{2p+1}$ & no \\
				& $\frac{\mathrm{SO}(7)}{\mathrm{SO}(4)\times \mathrm{SO}(3)}$		& $12$	& $\frac{12}{5}$ &  $\frac{8}{5}$ & no \\
				& $\frac{\mathrm{SO}(2p+3)}{\mathrm{SO}(3)\times \mathrm{SO}(2p)}$, $p\geq 3$		
				& $6p$	& $\frac{4p+6}{2p+1}$ &  $\frac{8}{2q+1}$ & no \\
				& $\frac{\mathrm{SO}(2q+2p+1)}{\mathrm{SO}(2q+1)\times \mathrm{SO}(2p)}$, $p,q\geq 2$		
				& $2n(2m+1)$	& $\frac{4m+4n+2}{2m+2n-1}$ & $\frac{8}{2p+2q-1}$ & no \\
				\hline
				\multirow{1}{*}{B II} & $\frac{\mathrm{SO}(2p+1)}{\mathrm{SO}(2p)}=\mathbb{S}^{2p}$, $p\geq 1$		
				& $2p$		& $\frac{2p}{2p-1}$ & $\frac{4p+2}{2p-1}$ & no \\	
				\hline
				C I	& $\mathrm{Sp}(p)/\mathrm{U}(p)$, $p\geq 3$		& $p(p+1)$		& $2$ & $\frac{2p}{p+1}$ & no\\		
				\hline
				\multirow{3}{*}{C II}
				& $\frac{\mathrm{Sp}(2)}{\mathrm{Sp}(1)\times \mathrm{Sp}(1)}=\mathbb{S}^4	$		
				& $4$		& $\frac{4}{3}$ & $\frac{10}{3}$ & no \\
				& $\frac{\mathrm{Sp}(p+1)}{\mathrm{Sp}(p)\times \mathrm{Sp}(1)}=\HP^p$, $p\geq 2$		
				& $4p$	& $\frac{2(p+1)}{p+2}$ & $\frac{2(p+1)}{p+2}$ & no \\
				& $\frac{\mathrm{Sp}(p+q)}{\mathrm{Sp}(q)\times \mathrm{Sp}(p)}$, $q\geq p\geq 2$		
				& $4pq$	& $\frac{2(p+q)}{p+q+1}$ & $\frac{2(p+q)}{p+q+1}$ & no \\
				\hline
								\pagebreak
									\hline
								type & $G/K$ & $\dimn(G/K)$ & $\Lambda$ & $\Theta$ & STS \\
				\hline
				
				\multirow{6}{*}{D I}
				
				& $\frac{\mathrm{SO}(8)}{\mathrm{SO}(5)\times\mathrm{SO}(3)}$		& $15$	& $\frac{5}{2}$ &  $\frac{5}{2}$ & no \\		
				& $\frac{\mathrm{SO}(2p+2)}{\mathrm{SO}(2p)\times \mathrm{SO}(2)}$, $p\geq 3$		& $4p$	& $2$ & $2$ & no \\
				& $\frac{\mathrm{SO}(2p)}{\mathrm{SO}(p)\times \mathrm{SO}(p)}$, $p\geq 4$		& $p^2$	& $\frac{2p}{p-1}$ & $\frac{2p}{p-1}$ & no \\
				& $\frac{\mathrm{SO}(2p+2)}{\mathrm{SO}(p+2)\times \mathrm{SO}(p)}$, $p\geq 4$		& $p(p+2)$	& $\frac{2p+2}{p}$ & $\frac{2p+2}{p}$ & no \\
				& $\frac{\mathrm{SO}(2p)}{\mathrm{SO}(2p-q)\times \mathrm{SO}(q)}$,	& $(2p-q)q$	& $\frac{2p}{p-1}$ & $\frac{2p}{p-1}$ & no \\
				& $p-2\geq q\geq 3$	& & & & \\
				\hline
				D II& $\frac{SO(2p+2)}{SO(2p+1)}=S^{2p+1}$, $p\geq 3$		& $2p+1$		& $\frac{2p+1}{2p}$ & $\frac{2(p+1)}{p}$ & no \\
				\hline
				D III	& $\mathrm{SO}(2p)/\mathrm{U}(p)$, $p\geq 5$		& $p(p-1)$		& $2$ &  $2$ & no\\
				\hline
				E I	& $\mathrm{E}_6/[\mathrm{Sp}(4)/\left\{\pm I\right\}]$ & $42$		& $\frac{28}{9}$ & $3$ & no\\
				\hline
				E II	& $\mathrm{E}_6/\mathrm{SU}(2)\cdot \mathrm{SU}(6)$			& $40$		& $3$ & $3$ & no\\
				\hline
				E III	& $\mathrm{E}_6/\mathrm{SO}(10)\cdot \mathrm{SO}(2)$	& $32$		& $2$ &  $2$ & no\\
				\hline
				E IV	& $\mathrm{E}_6/\mathrm{F}_4$			& $26$		& $\frac{13}{9}$ & $\frac{13}{9}$ & no\\
				\hline
				E V	& $\mathrm{E}_7/[\mathrm{SU}(8)/\left\{\pm I\right\}]$ & $70$		& $\frac{10}{3}$ & $\frac{28}{9}$ & yes \\
				\hline
				E VI	& $\mathrm{E}_7/\mathrm{SO}(12)\cdot\mathrm{SU}(2)$ & $64$		& $\frac{28}{9}$ & $\frac{28}{9}$ & no \\
				\hline
				E VII	& $\mathrm{E}_7/\mathrm{E}_6\cdot \mathrm{SO}(2)$ & $54$		& $2$ & $2$ & no \\
				\hline
				E VIII	& $\mathrm{E}_8/\mathrm{SO}(16)$ & $128$		& $\frac{62}{15}$ & $\frac{16}{5}$ & yes \\
				\hline
				E IX	& $\mathrm{E}_8/\mathrm{E}_7\cdot \mathrm{SU}(2)$ & $112$		& $\frac{16}{5}$ & $\frac{16}{5}$ & yes \\
				\hline
				F I	& $\mathrm{F}_4/Sp(3)\cdot\mathrm{SU}(2)$ & $28$		& $\frac{26}{9}$ & $\frac{26}{9}$ & no \\
				\hline
				F II	& $\mathrm{F}_4/\mathrm{Spin}(9)$ & $16$		& $\frac{4}{3}$ & $\frac{4}{3}$ & no \\
				\hline
				G	& $\mathrm{G}_2/\mathrm{SO}(4)$ & $8$		& $\frac{7}{3}$ & $\frac{7}{3}$ & no \\
				\hline
				\caption{Conditions of Theorem \ref{tang_op_estimate} for symmetric spaces of non-group type}
			\end{longtable}
		\end{center}
	\end{proof}

	\providecommand{\url}[1]{\texttt{#1}}
	\expandafter\ifx\csname urlstyle\endcsname\relax
	\providecommand{\doi}[1]{doi: #1}\else
	\providecommand{\doi}{doi: \begingroup \urlstyle{rm}\Url}\fi

\end{document}